\newcommand{\mlput}[1]{\hbox to 0pt{\hss{#1}}}
\newcommand{\mrput}[1]{\hbox to 0pt{{#1}\hss}}
\newcommand{\mcput}[1]{\hbox to 0pt{\hss{#1}\hss}}
\newlength{\brevehght}
\def\be{\begin{equation}}
\def\ee{\end{equation}}
\newcommand{\ignore}[1]{}
\newcommand{\bluevar}[1]{\begingroup #1\endgroup}
\theoremstyle{theorem}
\newtheorem{theorem}{Theorem}
\newtheorem{prop}[theorem]{Proposition}
\newtheorem{lemma}[theorem]{Lemma}
\theoremstyle{definition}
\newtheorem{example}[theorem]{Example}
\theoremstyle{remark}
\newtheorem{remark}[theorem]{Remark}
\newcommand{\sign}{\mathrm{sgn}\,}
\newcommand{\mscomm}[1]{}
\newcommand{\khcomm}[1]{}
\newcommand{\HP}[1]{}
\begin{document}

\title{Surfaces of constant principal-curvatures ratio in isotropic geometry}


\author[]{\fnm{Khusrav} \sur{Yorov}}

\author*[]{\fnm{Mikhail} \sur{Skopenkov$^*$,}}
\email{mikhail.skopenkov\,@\,gmail.com}

\author[]{\fnm{Helmut} \sur{Pottmann}}

\affil[]{\orgname{King Abdullah University of Science and Technology}, \orgaddress{\city{Thuwal}, \postcode{23955}, \country{Saudi Arabia}}}


\abstract{
We study surfaces with a constant ratio of principal curvatures in Euclidean and \bluevar{simply} isotropic geometries and characterize rotational, channel, ruled, helical, and translational surfaces of this kind under some technical restrictions (the latter two cases only in isotropic geometry). We use the interlacing of various methods of differential geometry, including line geometry and Lie sphere geometry, ordinary differential equations, and elementary algebraic geometry.
}

\keywords{Isotropic geometry, constant ratio of principal curvatures, minimal surfaces, Weingarten surfaces.}
%
%
\pacs[Mathematics Subject Classification]{53A05,53A10,53C42}
\maketitle


\section*{Contents}
\vspace{-0.7cm}
\renewcommand{\contentsname}{}
\tableofcontents



\section{Introduction} \label{sec-intro}

We study surfaces with a constant ratio of principal curvatures in Euclidean and \bluevar{simply} isotropic geometries and characterize rotational, channel, ruled, helical, and translational surfaces of this kind under some technical restrictions (the latter two cases only in isotropic geometry). 

 \begin{figure}[t]
 \begin{overpic}[width=0.97\textwidth]{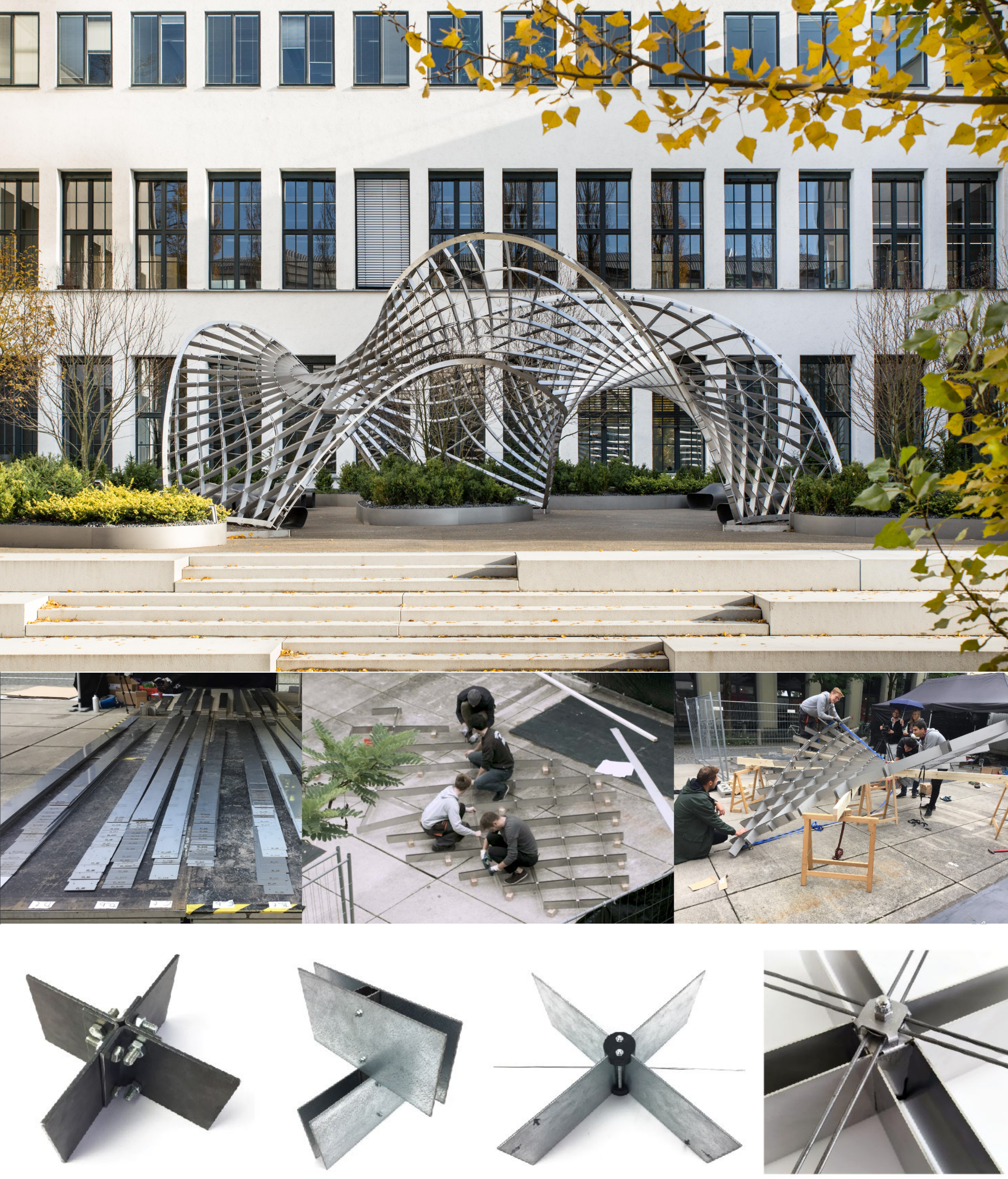}
\end{overpic}
  \caption{\bluevar{An asymptotic gridshell (top) \cite{schling:2018}. 
  Straight lamellas (middle) follow asymptotic curves of the reference surface 
  and intersect under a right angle. This simplifies the manufacturing process as all steel joints (bottom) 
  are identical but forces the reference surface 
  to be a Euclidean minimal surface. If the intersection angle is constant and not right, then the joints are still identical, and we get more general surfaces: ones with 
  a constant ratio of principal curvatures.
    }}
    \label{fig:Munich}
\end{figure}

Surfaces with a constant ratio of principal curvatures, or briefly \emph{CRPC surfaces}, generalize minimal surfaces while keeping invariance under similarities. However, they are significantly harder to construct than minimal surfaces. CRPC surfaces are characterized geometrically as surfaces having a constant angle between characteristic curves (asymptotic curves in the case of negative Gaussian curvature; conjugate and principal symmetric curves
in case of positive Gaussian curvature). 

Recent interest in CRPC surfaces has its origin in architecture, in
particular in the aim of
building geometrically complex shapes from simple elements. A remarkable
class of such shapes is given by the asymptotic gridshells of E.~Schling \cite{schling:2018,schling-aag-2018}. \bluevar{See Fig.~\ref{fig:Munich}.} They are formed by bending originally
flat straight lamellas of bendable material (metal, timber) and
arranging them in a quadrilateral structure so that all strips are orthogonal
to some reference surface $S$. This requires the strips to follow
asymptotic curves on $S$. If, in addition, one aims at congruent nodes
to further simplify fabrication, one arrives at surfaces $S$ on
which asymptotic directions form a constant angle, i.e., at
negatively curved CRPC surfaces. Even positively curved CRPC
surfaces and other Weingarten surfaces are of interest in architecture,
since they only have a one-parameter family of curvature elements, 
which simplifies surface paneling of double-curved
architectural skins through mold re-use \cite{weingarten-2021}.

A classical general approach to complicated problems in Euclidean geometry is to start with their
simpler analogs in so-called \bluevar{simply} isotropic geometry. The isotropic analogs give a lot of geometric insight and also provide an initial guess for numerical optimization. This approach has been (implicitly) used since as early as the work \cite{Muntz} by M\"untz from 1911,  who solved the Plateau problem for Euclidean minimal surfaces in a quite general setup by deformation of graphs of harmonic functions. Such graphs are minimal surfaces in \bluevar{simply} isotropic geometry; thus in this case the optimization 
led to the whole existence proof.

\emph{\bluevar{Simply} isotropic geometry}, \bluevar{also called just \emph{isotropic geometry} in the literature and the rest of this paper}, has been studied extensively
by K.~Strubecker (see, e.g., \cite{strubecker:1941,strubecker:1942a,strubecker:1942}) and is treated in the monograph by H.~Sachs \cite{sachs}. It is based on the group of affine transformations which preserve the \emph{isotropic semi-norm} $\|(x,y,z)\|_i:=\sqrt{x^2+y^2}$ in space with the coordinates $x,y,z$.
It can also be seen as relative differential geometry with respect to the \emph{unit isotropic sphere} (paraboloid of revolution) $2z=x^2+y^2$ \bluevar{(see~\cite{muller1921relative, simon1991introduction, dSL23}).}
Isotropic geometry is simpler but has much in common with Euclidean and other Cayley--Klein geometries.

The isotropic geometry of surfaces appears also in structural design and statics (see, e.g., \cite{millar:2023}), due to the close relation between the stresses in a planar body
and the isotropic curvatures of the associated Airy stress surface 
\cite{Strubecker_Airy}. CRPC surfaces in isotropic space
represent planar stress states with a constant ratio of principal stresses.

In our arguments, we use the interlacing of various methods of differential geometry
, ordinary differential equations, and elementary algebraic geometry (the latter --- in the classification of the translational CRPC surfaces, which we consider as our main contribution; see Section~\ref{sec-translational}).


\subsection{Previous work}

\textbf{Euclidean Geometry.} Only a few explicit examples of CRPC surfaces, not being minimal surfaces, have been known before.  The explicit parameterizations were only available for rotational and helical CRPC surfaces \cite{hopf-1951,kuehnel-2013,mladenov+2003,mladenov+2007,lopez-pampano-2020,HelmutHui2022, helicalcrpc}. 

CRPC surfaces are a special case of so-called Weingarten surfaces. 
A \emph{Weingarten surface} is a surface with a fixed functional relation $f(H, K) = 0$ between the mean curvature $H$ and Gaussian curvature $K$ at each point (we assume that the zero set of the function $f$ is an analytic curve). 
A surface   is called \emph{linear Weingarten} if there is a fixed linear relation between the two principal curvatures $\kappa_1$ and $\kappa_2$ 
at each point. Recently \bluevar{L{\'o}pez and P{\'a}mpano}~\cite{lopez-pampano-2020} have classified all rotational linear Weingarten surfaces, which are CRPC surfaces when the intercept of the relation is zero. 
Moreover, it has been shown that linear Weingarten surfaces are rotational if they are foliated by a family of circles \cite{lopez:2008}. Using quite involved computations, Havlíček \cite{Havlíček53} proved that channel Weingarten surfaces must be rotational or pipe surfaces; cf.~\cite{udo-channel}. 
In Section~\ref{sec-rotational} we give a geometric proof of this result.

In  works \cite{riveros+2012,riveros+2013,staeckel-1896}
rotational CRPC surfaces with $K<0$ have been characterized via isogonal asymptotic parameterizations. Yang et al.~\cite{helicalcrpc} have recently presented a characterization of all helical CRPC surfaces. 

The most well-studied Weingarten surfaces are 
the ones with $K=\mathrm{const}$ or $H=\mathrm{const}$. Classification results for rotational, helical, and translational surfaces of this kind can be found in \cite[Chapter~26, p.~147--158]{strubecker-1969} and \cite{wunderlich_helical-minimal, KenmotsuKKatsuei2003Swcm, Liu1999, hasanis2018, hasanis2019, lopez2010}. Recently Udo et al.~\cite{Hertrich-Jeromin2023} obtained explicit parametrization for channel surfaces with $K=\mathrm{const}$ in the space forms. 

CRPC surfaces, via a Christoffel-type transformation of certain spherical nets, were derived in \cite{jimenez+2019} with a focus on discrete models. In work \cite{HelmutHui2022}, we can find an effective method for the computation of discrete CRPC surfaces that provides insight into the shape variety of CRPC surfaces. Since numerical optimization was involved there, one
cannot derive precise mathematical conclusions, but it can be helpful for further studies.

\textbf{Isotropic Geometry.} 
As far as we know, the examples of isotropic CRPC surfaces known before were either minimal (having isotropic mean curvature $H=0$) or paraboloids (having both $H=\mathrm{const}$ and the isotropic Gaussian curvature $K=\mathrm{const}$). However, there is a variety of related works regarding the conditions
$H=\mathrm{const}$ or $K=\mathrm{const}$ separately. Surfaces with $K=\mathrm{const}$ have received early attention as solutions of the
\bluevar{Monge-Amp{\'e}re} equation, but only within isotropic geometry their 
geometric constructions, e.g., as Clifford translational surfaces, are elegant and simple \cite{strubecker:1942a}. 
\bluevar{Invariant surfaces with $H=\mathrm{const}$ or $K=\mathrm{const}$, including the parabolic rotational surfaces, were studied in detail by da Silva \cite{dS21a}.}
An exact representation of several types of ruled surfaces with $H=\mathrm{const}$ or $K=\mathrm{const}$ 
can be found in \cite{Alper2018, KaracanYoonYuksel+2017+87+98}.
All helical surfaces with $H=\mathrm{const}$ or $K=\mathrm{const}$ 
were classified in \cite{Aydn2016ClassificationRO, Yoon2016}. Translational surfaces with $H=\mathrm{const}$ or $K=\mathrm{const}$
were classified in \cite{Milin}, \bluevar{(generalizing the classical result for $H = 0$ from \cite{Strubecker+1977+minimal})} in the case when the generating curves are planar and in \cite{Aydin2020} in the case when one of the generating curves is spatial. However, the classification is still unknown when both generating curves are spatial.


\section{Preliminaries} \label{sec-preliminaries}


\subsection{Admissible surfaces and isotropic curvatures}

Recall that the \emph{isotropic semi-norm} in space with the coordinates $x,y,z$ is 
$\|(x,y,z)\|_i:=\sqrt{x^2+y^2}$.
%
An 
affine transformation of $\mathbb{R}^3$ that scales the isotropic semi-norm by a 
constant factor 
has the form
\begin{equation*}
  \mathbf{x}'= A\cdot \mathbf{x}+\mathbf{b}, \quad 
A=  \begin{pmatrix} \bluevar{\pm}h_1 & \bluevar{\mp}h_2 & 0  \\
 h_2 & h_1 & 0  \\
 c_1 & c_2 & c_3 \end{pmatrix} 
\end{equation*}
for some values of the parameters $\mathbf{b}\in\mathbb{R}^3$ and $h_1,h_2,c_1,c_2,c_3\in\mathbb{R}$. Such transformations form the 8-parametric group $\bluevar{\mathcal{G}^8}$ of \emph{general isotropic similarities}. The group of \emph{isotropic congruences}
is the 6-parametric subgroup 
with 
$$ h_1=\cos \phi, \quad h_2=\sin \phi,\quad c_3=1. $$
These transformations appear as Euclidean congruences in the projection onto the
plane $z=0$, which we call \emph{top view}.
Therefore, \emph{isotropic distances} between
points and \emph{isotropic angles} between lines appear in the top view as Euclidean distances and angles, respectively.


Lines and planes \bluevar{that} are parallel to the $z$-axis are called \emph{isotropic} or \emph{vertical}. They play a special role and are usually excluded as tangent spaces in differential geometry. A point of a surface is \emph{admissible} if the tangent plane at the point is non-isotropic, and a surface is \emph{admissible} if it has only admissible points. Hereafter by a \emph{surface} we mean the image of a proper injective $C^3$ map of a closed planar domain into $\mathbb{R}^3$ with nondegenerate differential at each point, or, more generally, an embedded \bluevar{connected} $2$-dimensional $C^3$ submanifold of $\mathbb{R}^3$, possibly with boundary and possibly non-compact. 

An admissible surface can be locally represented as the graph of a function,
$$ z=f(x,y). $$
It is natural to measure the curvature of the surface in a given direction by a second-order quantity invariant under isotropic congruences and vanishing for a plane. 
Thus the \emph{isotropic normal curvature} in a tangent direction $t=(t_1,t_2,t_3)$ with $\|t\|_i=t_1^2+t_2^2=1$ is defined to be
the second directional
derivative of $f$, 
$$ \kappa_n(t)= (t_1,t_2) \cdot 
\begin{pmatrix}
    f_{xx} & f_{xy} \\
    f_{yx} & f_{yy}
\end{pmatrix}
\cdot 
\begin{pmatrix}
t_1 \\ t_2
\end{pmatrix},
$$
and the \emph{isotropic shape operator} is defined to be the Hessian $\nabla^2(f)$ of $f$. Its
eigenvalues $\kappa_1$ and $\kappa_2$ are the \emph{isotropic principal curvatures} \bluevar{and the eigenvectors are the \emph{isotropic principal directions}. For $\kappa_1\ne\kappa_2$, the latter}  
are orthogonal in the top view, and thus also orthogonal in the isotropic sense. 

The \emph{isotropic mean} and \emph{Gaussian curvatures} are defined respectively by
\begin{equation*}
  H:=\frac{\kappa_1+\kappa_2}{2}=\frac{f_{xx}+f_{yy}}{2}\qquad\text{and}\qquad K:=\kappa_1\kappa_2=f_{xx}f_{yy}-f_{xy}^2.
\end{equation*}

An admissible surface \emph{has a constant ratio \bluevar{$a$} 
of isotropic principal curvatures}, or is a \emph{CRPC surface}, if \bluevar{$K\ne 0$, and} $\kappa_1/\kappa_2=a$ or $\kappa_2/\kappa_1=a$ at each point of the surface. The latter condition is equivalent to ${H^2}/{K} = (a+1)^2/(4a)$. 

In particular, for $a=-1$ we get \emph{isotropic minimal surfaces}, characterized by the condition $H=0$, i.e., the graphs of harmonic functions. As another example, for $a=1$ 
we get 
a unique up to \bluevar{isotropic} similarity CRPC surface $2z = x^2+y^2$, also known as the \emph{isotropic unit sphere} \cite[Section~62, p.~402]{strubecker:1942}. 

\emph{Isotropic principal curvature lines, asymptotic \bluevar{ curves},} and \emph{isotropic characteristic curves}  are defined analogously to the Euclidean case as curves tangent to corresponding directions \bluevar{(we exclude the points where $\kappa_1=\kappa_2$). Recall that two directions $(t_1,t_2,t_3)$ and $(s_1,s_2,s_3)$ at a surface point are \emph{conjugate} if $f_{xx}t_1s_1+f_{xy}t_1s_2+f_{yx}t_2s_1+f_{yy}t_2s_2=0$. One can see that two directions} 
are conjugate if one is tangent to a curve on the surface, while the other one is a ruling of the envelope of the tangent planes at points on the curve \bluevar{(see, e.g., \cite[Section~60
]{K91}, \cite[Sections~2-10]{S88}). In particular,} the isotropic principal directions are the ones that are conjugate and orthogonal in the top view. For $K>0$, the \emph{isotropic characteristic directions} are the ones that are conjugate and symmetric with respect to the isotropic principal directions in the top view. For $K<0$, they coincide with the asymptotic directions, which are the same in Euclidean and isotropic geometry. 

For isotropic CRPC surfaces, the isotropic characteristic curves intersect 
under the constant isotropic angle $\gamma$ 
with $\cot^2(\gamma/2) = \lvert a\rvert$. To see this, make the tangent plane at the intersection point horizontal by an appropriate isotropic congruence of the form $z\mapsto z+px+qy$ and apply a similar assertion in Euclidean geometry \cite[Section~2.1]{helicalcrpc}. All sufficiently smooth Euclidean or isotropic CRPC surfaces are analytic by the Petrowsky theorem \cite[p.3--4]{Petrowsky-39}. We sometimes restrict our results to the case of analytic surfaces, if this simplifies the proofs.

\begin{example} \label{ex-paraboloid} For any paraboloid with a vertical axis, or, equivalently, the graph of any quadratic function $f(x,y)$ with $\det\nabla^2(f)\ne 0$, both isotropic principal curvatures are constant. Hence their ratio $a$ is also constant. This surface can be brought to the paraboloid $z = x^2+ay^2$ by an appropriate general isotropic similarity. 
(Technically, $1/a$ can also be considered as such a ratio of the same surface, but $z = x^2+ay^2$ is isotropic similar to $z = x^2+y^2/a$.)

The isotropic principal curvature lines of a non-rotational paraboloid $z = x^2+ay^2$, where $a\ne 0,1$, are parabolae   (parabolic isotropic circles, to be discussed below) in the isotropic planes $x=\mathrm{const}$ and $y=\mathrm{const}$. The rotational paraboloid $z = x^2+ay^2+(a-1)(x-x_0)^2$ \bluevar{is tangent to} the surface $z = x^2+ay^2$ along the isotropic principal curvature line $x=x_0$. Thus the surface is an envelope of a one-parameter family (actually, two families) of congruent rotational paraboloids with vertical axes (
known as parabolic isotropic spheres). 

The characteristic curves of the paraboloid $z = x^2+ay^2$, where $a\neq 0,1$, appear in the top view as lines parallel to $x=\pm \sqrt{\lvert a\rvert}y$.
For a hyperbolic paraboloid ($a < 0$), these 
curves are the rulings. For an elliptic paraboloid ($a > 0$), they are parabolae forming a translational net on the surface. 

Thus a paraboloid with a vertical axis is both a translational (see Section~\ref{sec-translational}), a parabolic rotational (see Section~\ref{sec-rotational}), isotropic channel (see Section~\ref{sec-channel}), 
and, for $a<0$, a ruled surface (see Section~\ref{sec-ruled}). 
\end{example}

\subsection{Isotropic spheres and circles}

In isotropic geometry, there are two types of 
spheres.  

The set of all points at the same isotropic distance $r$ from a fixed point $O$ is called a \emph{\bluevar{cylindrical} isotropic sphere}. In Euclidean terms, it can be visualized as a right circular cylinder with vertical rulings. Its top view appears as a Euclidean circle with the center at the top view of $O$ and the radius $r$. Any point 
on the axis of this cylinder 
can serve as the center of the same isotropic sphere. 

An (inclusion-maximal) surface with both isotropic principal curvatures equal to a constant $A\ne 0$ 
is a \emph{parabolic isotropic sphere}. It has the equation 
$$
2z=A\left(x^2+y^2\right)+B x+C y+D, \qquad A \neq 0,
$$
for some $B,C,D\in\mathbb{R}$. Here ${1}/{A}$ is called 
the \emph{radius} of the isotropic sphere.
Such isotropic spheres are paraboloids of revolution with vertical axes. 

The intersection of an isotropic sphere $S$ with a non-tangential plane $P$ is an \emph{isotropic circle}. The isotropic circle is \emph{elliptic} if $P$ is non-isotropic, \emph{parabolic} if $S$ is parabolic and $P$ is isotropic, \emph{\bluevar{cylindrical}} if $S$ is \bluevar{cylindrical} and $P$ is isotropic. The resulting isotropic circle is an ellipse whose top view is a Euclidean circle, a parabola with a vertical axis, or a pair of vertical lines, respectively.

Recall that two 
curves $\mathbf{x}(t)$ and $\mathbf{y}(t)$ have a \emph{second-order contact} for $t=0$, if $\mathbf{x}(0)=\mathbf{y}(0)$, $\mathbf{x}'(0)=\mathbf{y}'(0)$, and $\mathbf{x}''(0)=\mathbf{y}''(0)$. Two non-parameterized curves have a \emph{second-order contact} if some of their regular parametrizations do. The \emph{osculating isotropic circle} of a spatial curve at a non-inflection point is an isotropic circle having a second-order contact with the curve at the point. 

There is an analog of Meusnier's theorem in isotropic geometry.

\begin{theorem} \label{l-Meusnier} \textup{(See \cite[Theorem 9.3]{sachs}, \cite[Section~47
]{strubecker:1942})}  
Let an admissible surface~$\Phi$ have isotropic normal curvature $\kappa_n\ne 0$ at a point $p \in \Phi$ along a surface tangent line $T$. 
Then the osculating isotropic circles of all curves on $\Phi$ \bluevar{that are tangent to} $T$ at $p$ lie on the parabolic isotropic sphere of radius $1/\kappa_n$ \bluevar{tangent to} $\Phi$ at $p$.
\end{theorem}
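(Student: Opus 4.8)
The plan is to reduce to a normal form by an isotropic congruence and then read the statement off the second-order Taylor expansions of the curves, exactly mirroring the Euclidean Meusnier theorem. Since isotropic normal curvatures, osculating isotropic circles, parabolic isotropic spheres, and the tangency relation are all preserved by isotropic congruences, I may apply a congruence of the form $z\mapsto z+px+qy$ (which lies in $G^6$) to bring $p$ to the origin $O$ and to make the tangent plane of $\Phi$ the horizontal plane $z=0$. Then $\Phi$ is locally the graph $z=f(x,y)$ with $f(O)=0$ and $\nabla f(O)=0$, the tangent direction $T$ becomes a horizontal unit vector $(t_1,t_2,0)$ with $t_1^2+t_2^2=1$, and the parabolic isotropic sphere of radius $1/\kappa_n$ touching $\Phi$ at $p$ (killing the linear and constant coefficients by the tangency condition) is $S:\ z=\tfrac{\kappa_n}{2}(x^2+y^2)$.

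First I would compute the $2$-jet at $O$ of an arbitrary curve $\gamma(s)=(x(s),y(s),f(x(s),y(s)))$ on $\Phi$ with $\gamma'(0)$ parallel to $T$. Because $\nabla f(O)=0$, the tangent is $\gamma'(0)=(t_1,t_2,0)$, and the key computation gives $\gamma''(0)=(x''(0),y''(0),\kappa_n)$, where the vertical component is precisely the isotropic normal curvature $\kappa_n=f_{xx}t_1^2+2f_{xy}t_1t_2+f_{yy}t_2^2$ from the definition — independent of the particular curve. In particular $\gamma'(0)$ and $\gamma''(0)$ are linearly independent (the former is horizontal, the latter is not, since $\kappa_n\ne0$), so $O$ is a non-inflection point and the osculating isotropic circle of $\gamma$ is well defined. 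Applying the same computation to the lift $c(s)=(x(s),y(s),\tfrac{\kappa_n}{2}(x(s)^2+y(s)^2))$ of the top view of $\gamma$ onto $S$, and using $t_1^2+t_2^2=1$, shows that $c$ has the very same $2$-jet at $O$ as $\gamma$; hence $\gamma$ and $c$ share the same osculating isotropic circle, and it suffices to prove that the osculating isotropic circle of a curve lying on $S$ lies on $S$.

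For the last step I would use that any isotropic circle osculating $\gamma$ is a planar curve whose velocity and acceleration at $O$ match $\gamma'(0),\gamma''(0)$; matching these two linearly independent vectors forces the plane of the circle to be the osculating plane $P$ of $\gamma$ at $O$. Thus the osculating isotropic circle is the curve $S\cap P$, provided $S\cap P$ is a genuine isotropic circle osculating $\gamma$. Since projection to the top view is an immersion along $\gamma$ at $O$ (the tangent is horizontal), it preserves second-order contact, so I would verify the claim in the top view: the top view of $S\cap P$ has to be the Euclidean osculating circle of the top view $\bar\gamma$, or, at an inflection of $\bar\gamma$ (where $P$ is vertical), its tangent line; lifting this circle (resp.\ line) back to $S$ produces an elliptic (resp.\ parabolic) isotropic circle lying on $S$. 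Because the Euclidean osculating circle/line is unique, the resulting isotropic circle is the unique osculating isotropic circle of $\gamma$, and it lies on $S$, as claimed.

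The routine part is the Taylor computation producing the vertical component $\kappa_n$, which is the isotropic incarnation of the Euclidean fact that all curves through $p$ sharing a tangent share the normal component of acceleration. I expect the main obstacle to be the careful handling of the notion of osculating isotropic circle: showing that it must lie in the osculating plane $P$, that $S\cap P$ is non-degenerate, and especially unifying the two morphologically different cases — the elliptic isotropic circle coming from a top view of nonzero curvature and the parabolic isotropic circle arising at an inflection of the top view — so that a single argument covers all curves $\gamma$ tangent to $T$ at $p$.
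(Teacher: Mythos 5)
The paper itself contains no proof of Theorem~\ref{l-Meusnier}: it is quoted as a classical fact with pointers to Sachs and Strubecker, so your argument has to stand on its own rather than be compared to an internal proof. Your route is the natural one and most of it checks out: the normalization by an isotropic congruence $z\mapsto z+px+qy$ plus a translation, the $2$-jet computation giving $\gamma''(0)=(x''(0),y''(0),\kappa_n)$, the observation that the lift $c$ of the top view of $\gamma$ onto $S$ has the same $2$-jet (so the whole statement reduces to curves lying on $S$), and the fact that any isotropic circle with second-order contact with $\gamma$ must lie in the osculating plane $P$. (One small imprecision there: second-order contact only forces $\sigma'(0)=c\,\gamma'(0)$ and $\sigma''(0)=c^2\gamma''(0)+d\,\gamma'(0)$ for some $c\ne 0$, $d$, not equality of velocities and accelerations; this does not harm your conclusion, since the span of these vectors, hence the plane, is unchanged.)

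The genuine gap is the final uniqueness step in the inflection case, which is exactly the case you flagged as the delicate one but did not resolve. When $\bar\gamma$ has an inflection at $O$, the plane $P$ is vertical, and then the top view of \emph{any} isotropic circle lying in $P$ is automatically the tangent line of $\bar\gamma$; so uniqueness of the Euclidean "osculating line" carries no information and cannot identify the circle with $S\cap P$. Concretely, every parabola $u\mapsto(ut_1,ut_2,\alpha u^2)$ with $\alpha\ne 0$ is a parabolic isotropic circle in $P$ with that same top view, yet only $\alpha=\kappa_n/2$ gives a curve on $S$. Hence the sentence "because the Euclidean osculating circle/line is unique, the resulting isotropic circle is the unique osculating isotropic circle" fails precisely when $P$ is vertical. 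The repair is short: an isotropic circle in the vertical plane $P$ is either cylindric (excluded, since its tangents are vertical while $\gamma'(0)$ is horizontal, so there is not even first-order contact) or a parabola as above with $\beta=\delta=0$ forced by passing through $O$ with tangent $(t_1,t_2,0)$; second-order contact means the accelerations agree modulo the common tangent direction, and since in this case $(x''(0),y''(0))\parallel(t_1,t_2)$, comparing vertical components gives $2\alpha=\kappa_n$, i.e., the circle is $S\cap P\subset S$. With that inserted, the proof is complete; the non-vertical case is fine as you wrote it, because there $P$ is a graph over the top view plane and the top view genuinely determines the circle.
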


\section{Rotational surfaces}\label{sec-rotational}

\subsection{Isotropic rotational CRPC surfaces}

Euclidean rotations about the $z$-axis are also 
isotropic congruences. A surface invariant under these rotations is called \emph{isotropic rotational}, as well as the image of the surface under any isotropic congruence. 

Looking for isotropic rotational CRPC surfaces, we 
consider the graph of a smooth function $z=h(r)$ of the \emph{radial distance} $r:=\sqrt{x^2+y^2}$. Profiles $x/y=\mathrm{const}$ and parallel circles $r=\mathrm{const}$ give a principal parameterization in isotropic geometry,
due to the symmetries. The isotropic profile curvature $\kappa_2$
equals the 2nd derivative $h''(r)$, and $\kappa_1=h'(r)/r$ by
Meusnier's theorem (Theorem~\ref{l-Meusnier}). Hence
$\kappa_2/\kappa_1=a$ amounts to solutions of $ah'=rh''$. Up to isotropic similarities,
this yields the profile curves
\begin{equation}\label{eq-profile}
h(r)= \begin{cases}  r^{1+a}, & \text{if } a \ne -1; \\
                     \log r,  & \text{if } a = -1, 
      \end{cases} 
\end{equation}                       
and the ones with $a$ replaced by $1/a$. 
We have arrived at the following \bluevar{result}.

\begin{prop}\label{thm-rotational} (See Figure~\ref{fig:rotational})
An admissible isotropic rotational surface has a constant ratio $a\ne 0$ of isotropic principal curvatures if and only if it is isotropic similar to a subset of one of the surfaces 
\begin{align}\label{eq-rotational}
    z &= (x^2+y^2)^{(1+a)/2}\qquad\text{or}\qquad z  = (x^2+y^2)^{(1+a)/(2a)},
    & &\text{if }a\ne -1;    \\
    \label{eq-logarithmoid}
    z &= \log(x^2+y^2), 
    & &\text{if }a = -1.
\end{align}
\end{prop}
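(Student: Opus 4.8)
The plan is to reduce the problem to a one-variable ODE. First I would show that admissibility forces an isotropic rotational surface to be, away from the axis, the graph $z = h(r)$ of a $C^3$ function of the radial distance $r = \sqrt{x^2+y^2}$. Generating the surface by rotating a profile $(\rho(s),\zeta(s))$ in a meridian half-plane about the $z$-axis, the surface point is $(\rho\cos\theta,\rho\sin\theta,\zeta)$ and the normal computes to $(-\zeta'\cos\theta,-\zeta'\sin\theta,\rho')$. This is horizontal, i.e.\ the tangent plane is isotropic, exactly when $\rho'=0$; so admissibility gives $\rho'\ne0$ everywhere, the profile is a graph over the $r$-axis, and rotational invariance makes $h$ depend on $r$ alone. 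One may work on an interval of radii $r\in(r_0,r_1)$, which is what accounts for the phrase ``subset of'' in the statement.

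Next I would compute the isotropic shape operator $\nabla^2 f$ for $f=h(r)$. Using the radial frame $v=(x/r,y/r)$ and the circular frame $w=(-y/r,x/r)$, a direct computation gives $v^\top \nabla^2 f\, v = h''$, $\ w^\top \nabla^2 f\, w = h'/r$, and $v^\top \nabla^2 f\, w = 0$. Hence the radial and circular directions are the isotropic principal directions, with $\kappa_2 = h''(r)$ and $\kappa_1 = h'(r)/r$; the latter is exactly Meusnier's theorem (Theorem~\ref{l-Meusnier}) applied to the parallel circle, as indicated before the statement. This also confirms rigorously the principal parameterization claimed from symmetry.

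The CRPC condition $\kappa_2/\kappa_1=a$ then reads $a\,h' = r\,h''$. Writing $g=h'$ turns this into the separable equation $r g' = a g$ with solution $g=C_1 r^a$, whence $h = C_1 r^{a+1}/(a+1)+C_2$ for $a\ne-1$ and $h = C_1\log r + C_2$ for $a=-1$; that both curvatures be nonzero (as required for a ratio $a\ne0$) forces $C_1\ne0$. I would then normalize by a general isotropic similarity: a $z$-translation removes $C_2$, and a vertical scaling by $1/C_1$ (resp.\ by the factor absorbing $\tfrac12$ in $\log r=\tfrac12\log(x^2+y^2)$) brings $h$ to $r^{1+a}=(x^2+y^2)^{(1+a)/2}$, resp.\ to $\log(x^2+y^2)$. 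The second family in \eqref{eq-rotational} arises from the alternative normalization $\kappa_1/\kappa_2=a$, i.e.\ $\kappa_2/\kappa_1=1/a$, which replaces $a$ by $1/a$ and yields $z=(x^2+y^2)^{(1+a)/(2a)}$; for $a=-1$ the two choices coincide, so only the single logarithmic surface appears.

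Finally, for the converse I would substitute each listed surface back in, checking it is admissible ($h''$ and $h'/r$ nonvanishing on the relevant radii) and realizes the prescribed ratio. I expect the eigenvalue computation and the ODE to be entirely routine; the steps needing the most care are the reduction to a radial graph (the normal-vector argument, and smoothness when $r_0=0$) and the bookkeeping in the similarity normalization --- in particular tracking the sign of $C_1$ (negative $c_3$ is allowed, so either sign normalizes to $+1$), the factor of two in the logarithmic case, and keeping straight which of $\kappa_1/\kappa_2$ and $\kappa_2/\kappa_1$ produces which exponent, so that both families in \eqref{eq-rotational} and their collapse at $a=-1$ are each accounted for exactly once.
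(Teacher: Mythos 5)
Your proposal is correct and takes essentially the same route as the paper: reduction to a radial graph $z=h(r)$, principal curvatures $\kappa_2=h''(r)$ and $\kappa_1=h'(r)/r$ in the profile and circular directions, the ODE $ah'=rh''$, and normalization by general isotropic similarities with the $a\leftrightarrow 1/a$ alternative producing the second family in \eqref{eq-rotational}. The only (cosmetic) differences are that you establish $\kappa_1=h'(r)/r$ and the principal directions by a direct Hessian computation where the paper appeals to symmetry and the isotropic Meusnier theorem (Theorem~\ref{l-Meusnier}), and you make explicit the admissibility argument ($\rho'\ne 0$, hence a graph over an $r$-interval) that the paper leaves implicit.
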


\begin{figure}[htbp]
\begin{overpic}[width=0.18\textwidth]{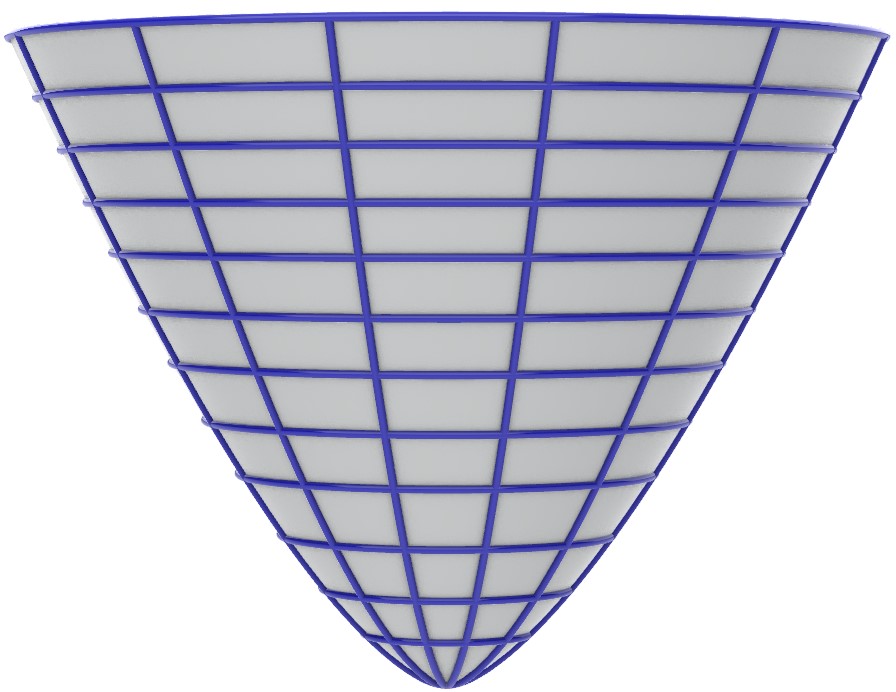}
\end{overpic} 
\qquad
\begin{overpic}[width=0.30\textwidth]{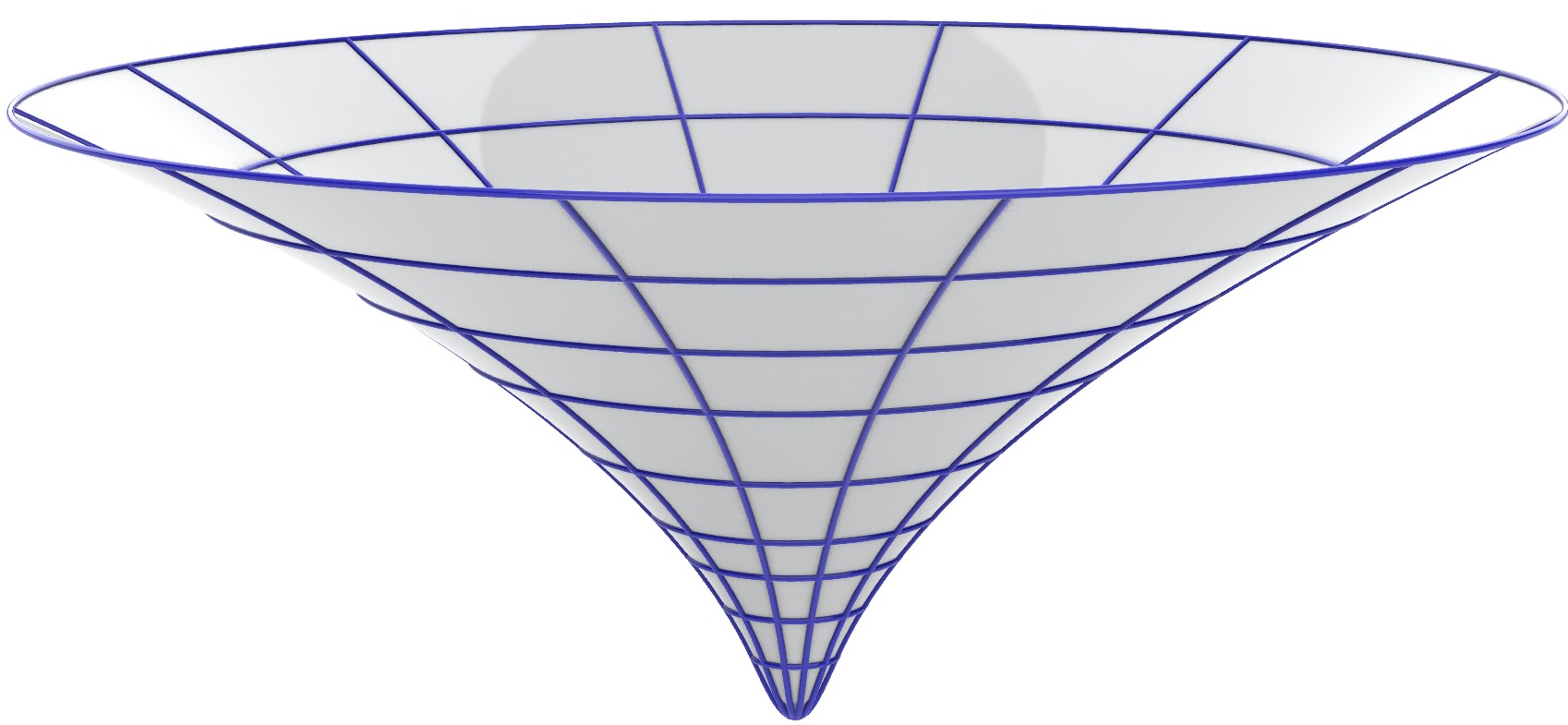}
\end{overpic}
\begin{overpic}[width=0.22\textwidth]{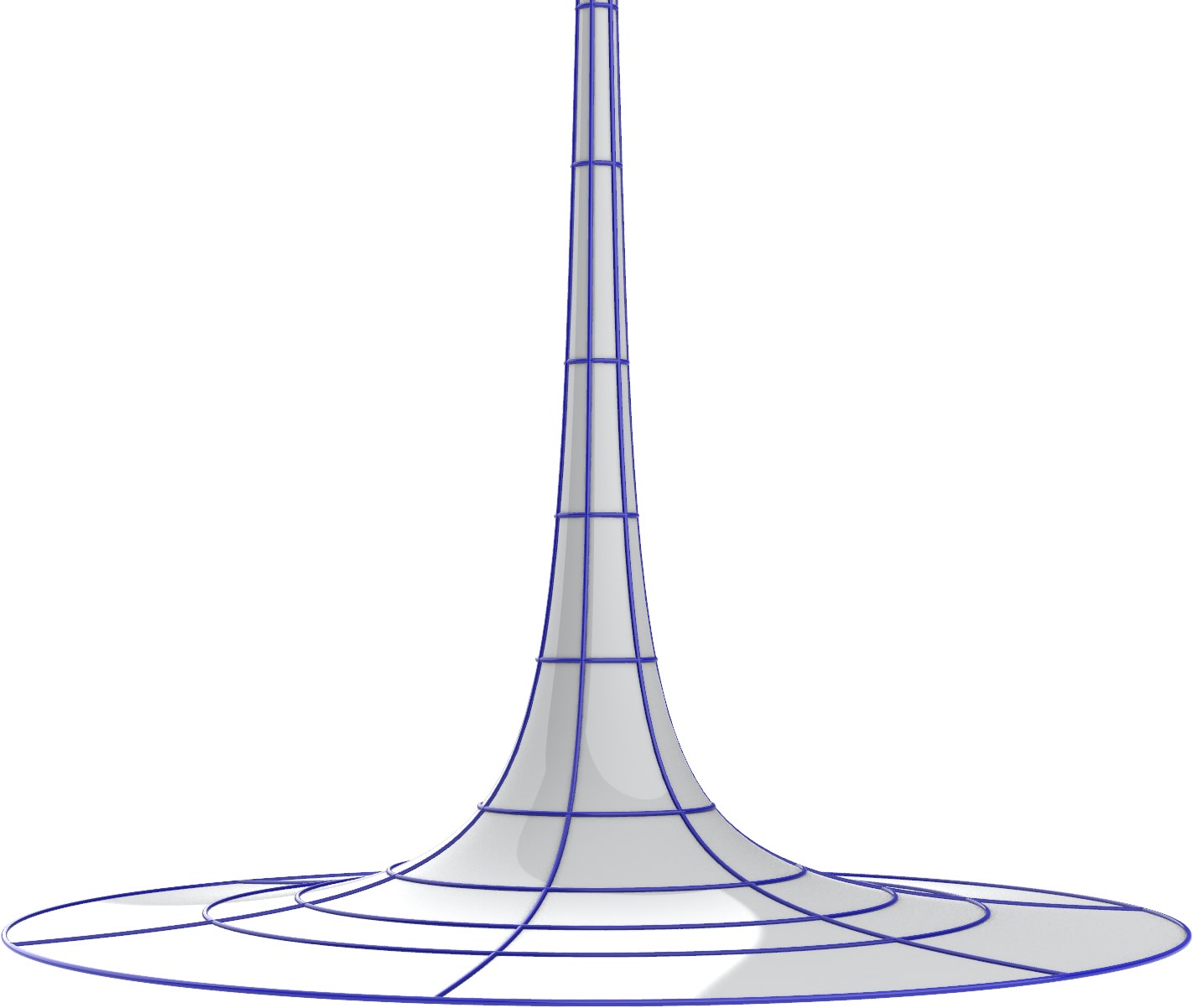}
\end{overpic}
\begin{overpic}[width=0.22\textwidth]{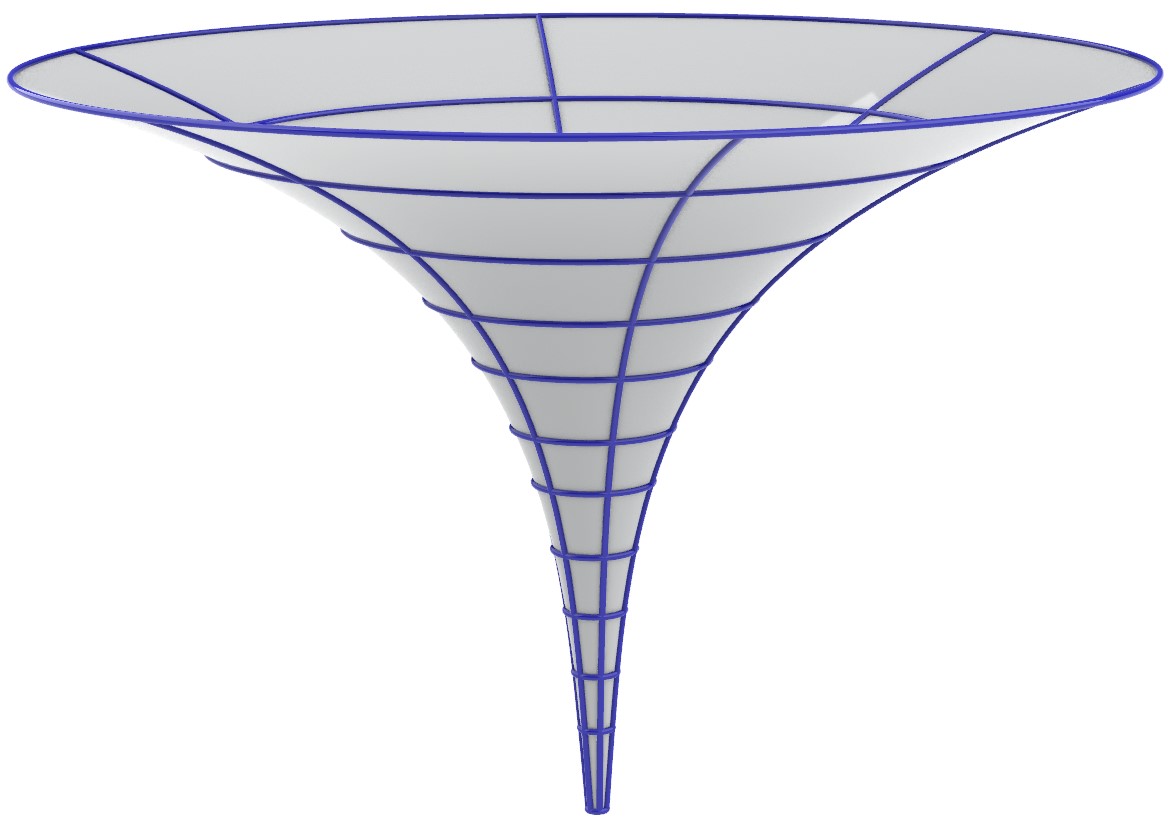}
\end{overpic}
    \caption{Rotational isotropic CRPC surfaces (from the left to the right):
the first of two surfaces~\eqref{eq-rotational} for $a>0$, $0>a>-1$, and $a<-1$ respectively;
surface~\eqref{eq-logarithmoid}. 
    }
    \label{fig:rotational}
\end{figure}

Let us discuss the geometry of the resulting surfaces, namely, the first of two surfaces~\eqref{eq-rotational} for $a\ne 0,\pm1$. Profile curves~\eqref{eq-profile} are also known as \emph{W-curves} \bluevar{or \emph{$(-a)$-catenaries} \cite[Corollary~1]{dSL23}}: they are
paths of one-parameter continuous subgroups of the group of affine maps, actually, of~$\bluevar{\mathcal{G}^8}$.

The isotropic characteristic curves 
intersect the first isotropic principal curvature lines (parallel circles) under the constant isotropic angle $ \gamma/2$ with $\cot^2(\gamma/2) = \lvert a\rvert$. 
Since isotropic angles appear as Euclidean angles in the top view,
the top views of the former must be logarithmic spirals, which intersect the radial
lines at angles $(\pi-\gamma)/2$. Hence, in the \bluevar{cylindrical} coordinate system $(r,\phi,z)$, the isotropic characteristic curves are isotropic congruent to
%
\begin{equation*}
r(\phi)= e^{\phi/\sqrt{\lvert a\rvert}},\quad z(\phi)=e^{\phi(1+a)/\sqrt{\lvert a\rvert}}.
\end{equation*}
These curves are again W-curves of a one-parametric subgroup of the isotropic similarity group $\bluevar{\mathcal{G}^8}$, and the rotational surfaces themselves are generated by the \bluevar{subgroup}. Its elements
are compositions of a rotation about the $z$-axis through some angle $\phi$, a homothety with the center at the origin and the
coefficient $e^{\phi/\sqrt{\lvert a\rvert}}$, and the scaling by a factor of $e^{\phi a/\sqrt{\lvert a\rvert}}$ in the vertical direction.


The simplest cases are $a=\pm 1$. We obtain the isotropic sphere 
$z=r^2$ and the logarithmoid $z=\log r$. The latter \bluevar{is the only isotropic minimal surface  of revolution (besides planes and up to general isotropic similarities) and can be viewed as the isotropic analog of the catenoid~\cite[Section~4.3(a)]{ds21b}.} 

Also of interest is the case $a=-1/2$, which leads to surfaces obtained by rotating the parabola $z^2=r$ about
its tangent at the vertex. Recall that in Euclidean geometry, the ratio $\kappa_2/\kappa_1=-1/2$ also leads to parabolae as
profiles, but rotated about the directrix \cite{HelmutHui2022}. Both surfaces are algebraic of order $4$.

Clearly, we get rational algebraic surfaces for rational values of $a\ne -1$. 

\bluevar{Yet another observation is that analytic rotational isotropic CRPC surfaces cannot intersect the rotation axis (at a nonsingular point) unless $a$ or $1/a$ is a positive odd integer.}

\subsection{Euclidean rotational CRPC surfaces}

It is worth 
comparing profile curves~\eqref{eq-profile} 
of rotational CRPC surfaces with their analogs in Euclidean geometry (\cite{hopf-1951}, \cite[Ex.~3.27]{kuehnel-2013}, \cite[Eq.~(3.2)]{lopez-pampano-2020}, \cite[Eq.~(7)]{HelmutHui2022}):
\begin{multline}
    \hspace{-0.4cm}h(r)=\int \frac{r^a\,dr}{\sqrt{1-r^{2a}}}
    =\frac{r^{1+a}}{1+a} 
    \, {}_2F_1\left(\frac{1}{2},\frac{1}{2}+\frac{1}{2 a}; \frac{3}{2}+\frac{1}{2 a}; r^{2a}\right),
    \quad a\ne -1,-\frac{1}{3},
    \dots
\end{multline}
Here ${}_2F_{1}(\alpha,\beta;\gamma;z)$ is the 
Gauss hypergeometric function 
(see e.g. \cite[Ch.~V, Section~7]{Nehari-75} for a definition), $r^a\le 1$, and $1/a$ is not a negative odd integer. The latter equality is checked in~\cite[Section~3]{check}.

%


\subsection{Parabolic rotational CRPC surfaces}
In isotropic geometry, there is a second type of rotations, so-called 
\emph{parabolic rotations}, given by
\begin{align*}
    x'&=x+t,\\
    y'&=y,\\
    z'&=t^2/2+(x+by)t+z,
\end{align*}
for some parameters $b,t$ \cite[Eq.~(2.14)]{sachs}. A surface invariant under these transformations 
for $b$ fixed and $t$ running through $\mathbb{R}$ is called \emph{parabolic rotational}, as well as the image of the surface under any isotropic congruence. 

It is not hard to find all parabolic rotational CRPC surfaces. Indeed, let the graph of a smooth function $z=z(x,y)$ be invariant under the parabolic rotations. The section $x=0$ is the graph of the function $h(y):=z(0,y)$. Applying the parabolic rotation 
with $t=x$ to the latter curve, we get the identity $z(x,y)=x^2/2+bxy+h(y)$. Then the isotropic Gaussian and mean curvatures are $K=h''-b^2$ and $H=(h''+1)/2$. Then the equation ${H^2}/{K} = (a+1)^2/(4a)$ is equivalent to $a(h''+1)^2=(a+1)^2(h''-b^2)$. Hence $h''=\mathrm{const}$ and $z(x,y)$ is a quadratic function. By Example~\ref{ex-paraboloid},
we arrive at the following proposition.

%
%

\begin{prop}\label{thm-parabolic-rotational}
An admissible parabolic rotational surface has a constant ratio $a\ne 0$ of isotropic principal curvatures if and only if it is isotropic similar to a subset of the paraboloid
$
    z = x^2+ay^2.
$ \label{parabolid1}   
\end{prop}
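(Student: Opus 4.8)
The plan is to follow exactly the computational strategy already sketched in the paragraph preceding the statement, but to organize it as a clean classification argument and to carefully handle the admissibility and non-degeneracy hypotheses. First I would set up the coordinate description of a parabolic rotational surface. Taking the graph $z=z(x,y)$ invariant under the given parabolic rotations, I restrict to the section $x=0$, obtaining the profile function $h(y):=z(0,y)$, and then apply the parabolic rotation with parameter $t=x$ to this profile curve. The invariance forces the identity $z(x,y)=x^2/2+bxy+h(y)$, so the entire surface is determined by the single function $h$ together with the fixed constant $b$.

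Next I would compute the isotropic shape operator, i.e.\ the Hessian $\nabla^2(z)$. From $z(x,y)=x^2/2+bxy+h(y)$ one reads off $z_{xx}=1$, $z_{xy}=b$, and $z_{yy}=h''(y)$, whence
\begin{equation*}
K = z_{xx}z_{yy}-z_{xy}^2 = h''-b^2,\qquad H=\frac{z_{xx}+z_{yy}}{2}=\frac{h''+1}{2}.
\end{equation*}
The CRPC condition $H^2/K=(a+1)^2/(4a)$ then becomes, after clearing the denominator, the algebraic relation $a(h''+1)^2=(a+1)^2(h''-b^2)$ in the single unknown quantity $h''(y)$. Since $a\ne 0$ is a fixed constant, this is a fixed quadratic equation in $h''$, so at each point $h''(y)$ must equal one of its (at most two) constant roots. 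Using that $h$ is $C^3$ (indeed analytic, by the Petrowsky theorem cited in the excerpt) and that the surface is connected, $h''$ cannot jump between two distinct roots, so $h''$ is a single constant; integrating twice shows $h$ is a quadratic polynomial and hence $z(x,y)$ is a quadratic function with $\det\nabla^2(z)=K=h''-b^2\ne 0$ by admissibility (the tangent plane is non-isotropic and the principal curvatures have fixed nonzero ratio, so $K\ne 0$). By Example~\ref{ex-paraboloid}, every such quadratic graph is isotropic similar to a subset of $z=x^2+ay^2$. The converse direction is immediate from Example~\ref{ex-paraboloid}, since that paraboloid is manifestly invariant under the parabolic rotations (with $b=0$) and has constant principal-curvature ratio $a$.

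The main obstacle I anticipate is not the computation but the bookkeeping around the two branches and the degenerate cases. One must check that the quadratic $a(h''+1)^2=(a+1)^2(h''-b^2)$ genuinely has real roots and that choosing the normalization $\kappa_2/\kappa_1=a$ versus $\kappa_1/\kappa_2=a$ (equivalently $a$ versus $1/a$, as flagged in Example~\ref{ex-paraboloid}) does not produce a surface outside the stated family. The cleanest way to dispatch this is to invoke the already-established equivalence of the CRPC condition with $H^2/K=(a+1)^2/(4a)$, which is symmetric under $a\mapsto 1/a$, so both labelings collapse to the same quadratic and the same conclusion; the only real content is therefore the rigidity argument that $h''$ is forced to be constant, which the analyticity (or even just continuity of $h''$ on a connected domain together with the discreteness of the root set) guarantees.
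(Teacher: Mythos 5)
Your proposal follows essentially the same route as the paper: the paper's proof is exactly the paragraph preceding the proposition, which derives $z(x,y)=x^2/2+bxy+h(y)$ from invariance, computes $K=h''-b^2$ and $H=(h''+1)/2$, reduces the CRPC condition to $a(h''+1)^2=(a+1)^2(h''-b^2)$, concludes $h''=\mathrm{const}$, and invokes Example~\ref{ex-paraboloid}. Your additions (the continuity-plus-discrete-roots argument for the constancy of $h''$, the explicit converse, and noting that $K\ne0$ follows from the nonzero curvature ratio rather than admissibility per se) are correct refinements of the same argument, so there is nothing to flag.
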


\bluevar{An analogous result for constant mean curvature surfaces was obtained in~\cite[Proposition~3]{dSL23}).} 

In this case, both isotropic principal curvatures are constant. In the next section, we show that this is the only surface with this property (Theorem~\ref{l-both-curvatures-constant}).




\section{Channel surfaces} \label{sec-channel}

Now we turn to channel surfaces and show that all channel CRPC surfaces are rotational (or parabolic rotational). Thus we will not encounter new surfaces.

\subsection{Euclidean channel CRPC surfaces}


 A \emph{channel surface} $C$ is defined as the \emph{envelope} of a smooth one-parameter family of spheres $S(t)$, i.e., the 
 surface $C$ \bluevar{that is tangent to} each sphere $S(t)$ along a single closed curve $c(t)$ so that the curves $c(t)$ cover $C$. The curves $c(t)$ are called \emph{characteristics} \bluevar{(
 not to be confused with the \emph{characteristic curves} discussed in Section~\ref{sec-preliminaries})}. Each characteristic $c(t)$ is a circle which is a principal curvature line on $C$ 
 (see Lemma~\ref{l-characteristic}). 
 The locus of sphere centers $s(t)$ is referred to as a \emph{spine curve} and their radii $r(t)$ constitute the \emph{radius function}.  Special cases of channel surfaces are \emph{pipe surfaces}, being envelopes of congruent spheres. 

For simplicity, we restrict ourselves to analytic surfaces. \bluevar{(Recall that all sufficiently smooth Euclidean or isotropic CRPC surfaces are analytic.)
} If $C$ is analytic (and has no umbilic points), then the family $S(t)$ is also analytic up to a change of the parameter $t$ (because the \bluevar{principal directions}, hence the principal curvature lines $c(t)$, hence the spheres $S(t)$ analytically depend on a point of $C$). Thus by an \emph{analytic channel surface} we mean an analytic surface which is the envelope of a one-parameter analytic family of spheres $S(t)$, i.e, a family such that both $s(t)$ and $r(t)$ are real-analytic functions. 

We would like to give
a short proof of the following result by Havlíček \cite{Havlíček53}. 

\begin{theorem}\label{th-Euclidean-Weingarten}
An analytic channel Weingarten surface 
is a rotational or pipe surface. 
In particular, if an analytic channel surface has a constant ratio of principal curvatures, then it is rotational. 
\end{theorem}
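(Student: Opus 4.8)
The plan is to characterize channel Weingarten surfaces by exploiting the special structure of their principal curvatures. On a channel surface, each characteristic $c(t)$ is a circle and a principal curvature line, so one principal curvature---say $\kappa_1$---is constant along each $c(t)$, being the curvature of the sphere $S(t)$ (up to sign), namely $\kappa_1 = 1/r(t)$. The second principal curvature $\kappa_2$ varies along $c(t)$ in general. The key idea is that the Weingarten relation $f(\kappa_1,\kappa_2)=0$ (equivalently $f(H,K)=0$), holding at every point and defining an analytic curve, forces a rigid constraint: since $\kappa_1$ is constant along each circle $c(t)$, the relation $f=0$ pins $\kappa_2$ to lie in the finite set of values $\{\kappa_2 : f(1/r(t),\kappa_2)=0\}$ at every point of $c(t)$. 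By analyticity and connectedness of the circle, $\kappa_2$ must then be \emph{constant} along each characteristic.

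**Next I would** extract the geometric consequence of both principal curvatures being constant along each characteristic circle $c(t)$. Along $c(t)$ the surface has constant mean and Gaussian curvature, and---more importantly---the principal curvature $\kappa_2$ in the direction \emph{along} the circle is constant. I would compute, via the tube/envelope parameterization $\mathbf{x}(t,\phi) = s(t) + r(t)\,\mathbf{n}(t,\phi)$ with $\mathbf{n}$ a unit normal field spanning the plane of $c(t)$, what it means for $\kappa_2$ to be independent of $\phi$ on each circle. The radius $r(t)$ of the characteristic circle and its position relative to the spine are governed by $r(t)$, $r'(t)$, and the Frenet data of the spine $s(t)$; the $\phi$-dependence of $\kappa_2$ is controlled by the curvature and torsion of the spine. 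Constancy of $\kappa_2$ around each circle should force either the torsion of the spine to vanish (giving a planar spine, hence---combined with the circle-foliation---a rotational surface, by the cited result \cite{lopez:2008} that circle-foliated linear Weingarten surfaces are rotational, or by direct argument) or the radius function to be constant (giving congruent spheres, hence a pipe surface).

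**The main obstacle** I expect is the middle step: converting ``$\kappa_2$ constant along each characteristic'' into the clean dichotomy ``planar spine or constant radius.'' This requires an explicit but careful handling of the envelope geometry---identifying how the torsion $\tau(t)$ of the spine and the derivative $r'(t)$ enter the formula for $\kappa_2(t,\phi)$, and showing that the only way to kill all $\phi$-dependence is $\tau \equiv 0$ or $r' \equiv 0$. Since the paper advertises a \emph{geometric} proof (in contrast to Havlíček's involved computations), I would aim to replace the brute-force curvature computation with a line-geometric or Lie-sphere-geometric observation: the characteristic circles, together with the condition that $\kappa_2$ be constant on each, should mean that the circles form an isothermic or Dupin-type configuration whose only channel representatives are surfaces of revolution and pipes.

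**Finally**, once the dichotomy is established, I would close each branch: a channel surface with planar spine foliated by circles orthogonal to the spine plane is rotational (the axis being the line in the spine plane about which the planar spine's circles are symmetric), and a channel surface of congruent spheres is by definition a pipe surface. The ``in particular'' clause for CRPC surfaces is then immediate, since a constant ratio of principal curvatures $\kappa_1/\kappa_2 = a$ is a special analytic Weingarten relation; but a pipe surface has constant $\kappa_1 = 1/r$, and the CRPC condition would then force $\kappa_2$ constant too, making it a rotational surface (a circular cylinder or torus of the appropriate type), so the only genuinely new possibility, rotational, is what survives.
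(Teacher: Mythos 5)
Your opening step coincides with the paper's: by Lemma~\ref{l-characteristic}, $\kappa_1=1/r(t)$ is constant along each characteristic, and the Weingarten relation then pins $\kappa_2$ on each circle. But already here there is a caveat you gloss over: the zero set $\{\kappa_2 : f(1/r(t),\kappa_2)=0\}$ is discrete (not finite) \emph{unless} $f(1/r(t),\cdot)$ vanishes identically, which happens exactly when the Weingarten relation is $\kappa_1=\mathrm{const}$ --- i.e., precisely the pipe branch of the conclusion. So the pipe case must be split off \emph{before} one argues that $\kappa_2$ is constant along characteristics; the paper does this explicitly, and your sketch only gestures at pipes at the end, after having asserted the constancy of $\kappa_2$ unconditionally.

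The genuine gap is in your middle step. The dichotomy you hope to extract from ``$\kappa_2$ constant along each circle'' --- namely $\tau\equiv 0$ (planar spine) or $r'\equiv 0$ --- is both unobtainable and insufficient. Torsion does not control the $\phi$-dependence of $\kappa_2$: in a rotation-minimizing frame along the spine the torsion is absorbed into the angular parameter, and the $\phi$-dependence of $\kappa_2$ is governed by the spine's \emph{curvature} vector (through terms of the form $\kappa(t)\cos\phi$). Killing it in the generic situation forces $\kappa(t)=0$, i.e., a \emph{straight} spine, which is what rotationality actually requires. Conversely, ``planar spine'' is far too weak: a channel surface over a planar non-straight spine with varying radius is foliated by circles yet is not rotational, so your closing claim that a planar spine plus the circle foliation yields a surface of revolution is false as stated. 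Nor can the citation of~\cite{lopez:2008} repair this: that theorem concerns \emph{linear} Weingarten surfaces, while the statement here allows an arbitrary analytic relation $f(H,K)=0$ (every rotational channel surface is Weingarten, but hardly linear Weingarten), so invoking it covers only a special case of what is to be proved. The decisive content --- proving the spine is a straight line --- is exactly what the paper supplies via the osculating rotational surface $D$ and Lemma~\ref{l-osculating-Dupin-cyclide}, which shows (by planar sections, osculating circles, and the Tait--Kneser theorem, under the genericity assumptions $\kappa_1',\kappa_2'\ne 0$, $\kappa_1\ne\kappa_2$) that $s(t)$ has second-order contact with the axis of each characteristic, with the degenerate cases ($\kappa_1=\mathrm{const}$, $\kappa_2=\mathrm{const}$, $\kappa_1\equiv\kappa_2$) handled separately and glued by analyticity. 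Your proposal never performs the corresponding computation, and as organized it would conclude the wrong dichotomy even if it did.
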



%
%
%
%

We start by recalling a well-known proof of the basic properties of characteristics; this will help us in establishing their isotropic analogs.

\begin{lemma}\label{l-characteristic}
    Under the notation at the beginning of Section~\ref{sec-channel}, $c(t)$ is a principal curvature line on $C$, and the principal curvature along $c(t)$ is $1/r(t)$. If the family $S(t)$ is analytic then $c(t)$ is a circle with the axis parallel to $s'(t)$. If $r'(t) \ne 0$ then $s'(t)\ne 0$.   
\end{lemma}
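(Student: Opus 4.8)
The plan is to exploit the defining property of the envelope. Since $C$ touches each sphere $S(t)$ along $c(t)$, at every point of $c(t)$ the surface $C$ and the sphere $S(t)$ share the same tangent plane, hence the same unit normal. Parametrizing $C$ by $(t,u)$ so that for fixed $t$ the map $u\mapsto x(t,u)$ traces $c(t)$, and letting $N(t,u)$ be the unit normal of $C$, this shared-normal condition reads $x(t,u)=s(t)+r(t)\,N(t,u)$ after orienting $N$ to point away from the center $s(t)$. This single identity drives the whole argument.

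First I would differentiate it in $u$. Since $s(t)$ and $r(t)$ do not depend on $u$, this gives $x_u=r(t)\,N_u$, i.e. $N_u=\tfrac{1}{r(t)}x_u$. Thus the tangent $x_u$ of $c(t)$ is an eigenvector of the differential of the Gauss map, so $c(t)$ is a principal curvature line and the principal curvature in its direction has magnitude $1/r(t)$ (the sign being fixed by the orientation of $N$); no analyticity is needed here. Next I would differentiate the sphere equation $\lvert x(t,u)-s(t)\rvert^2=r(t)^2$ in $t$ and substitute $x-s=rN$; using that $x_t$ is tangent to $C$, so $x_t\cdot N=0$, the cross terms cancel and one is left with $s'(t)\cdot N(t,u)=-r'(t)$, equivalently $s'(t)\cdot\bigl(x(t,u)-s(t)\bigr)=-r(t)r'(t)$. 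The right-hand side is independent of $u$, so for fixed $t$ the characteristic $c(t)$ lies in a plane with normal $s'(t)$; intersected with the sphere $S(t)$ this is a circle whose axis is parallel to $s'(t)$. The same relation $s'(t)\cdot N=-r'(t)$ yields the last assertion at once: if $r'(t)\ne0$ then $s'(t)\cdot N\ne0$, forcing $s'(t)\ne0$.

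The one place where analyticity is genuinely used, and the main point requiring care, is the circle claim at instants where $s'(t)=0$: there the plane condition degenerates to $0=0$ and says nothing. By analyticity the zero set of $s'(t)$ is either the whole parameter interval --- the excluded case in which all spheres coincide and no honest channel surface arises --- or a discrete set of isolated values. In the latter case $c(t)$ is a circle for all nearby parameters, and I would conclude that $c(t)$ is a circle at the exceptional value as well by passing to the limit, invoking the analytic dependence of the family to guarantee that the limiting curve does not degenerate to a point. I expect this limiting and degeneracy analysis to be the only real obstacle; the algebraic heart of the proof is the elementary manipulation of the identity $x=s+rN$.
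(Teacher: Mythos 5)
Your proof is correct, and for the first half of the lemma it takes a genuinely different route from the paper. You derive the principal-curvature-line claim and the value $1/r(t)$ analytically from the single identity $x(t,u)=s(t)+r(t)\,N(t,u)$: differentiating in $u$ gives $N_u=x_u/r$, so $x_u$ is an eigenvector of the differential of the Gauss map (Rodrigues' characterization of curvature lines). The paper instead argues synthetically: $c(t)$ is a curvature line of $S(t)$, hence of $C$ by Joachimsthal's theorem, and the value $1/r(t)$ follows from Meusnier's theorem, since the osculating circle of $c(t)$ must lie both on the tangent sphere of radius $1/\kappa_1$ and on $S(t)$. The paper's choice is deliberate: it states that this proof is recalled so that it can be repeated in isotropic geometry, where Joachimsthal and Meusnier have exact analogues, whereas a unit-normal computation like yours has no direct isotropic counterpart (the isotropic shape operator is a Hessian, not the differential of a normal field). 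Your argument buys self-containedness in the Euclidean setting -- no named theorems are invoked -- at the cost of that transferability. For the remaining claims your route coincides with the paper's: differentiating $\lvert x-s(t)\rvert^2=r(t)^2$ in $t$ and killing the tangential term yields the envelope condition $s'(t)\cdot\bigl(x-s(t)\bigr)=-r(t)r'(t)$ (the paper cites this from Bruce--Giblin), from which the plane, the circle, and the implication $r'(t)\ne 0\Rightarrow s'(t)\ne 0$ follow exactly as you say; the isolated zeros of $s'(t)$ are handled by the same analyticity-plus-limit argument in both texts. The one place where you are vaguer than the paper is the non-degeneracy of that limit: the paper closes it by observing that distinct characteristics are pairwise disjoint closed curves, and a family of pairwise disjoint circles on the surface, disjoint from the curve $c(t)$, cannot shrink to a point of that curve; you correctly flag this as the main obstacle but leave it as an appeal to "analytic dependence" rather than giving the geometric reason.
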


\begin{proof}
  Since $S(t)$ and $C$ \bluevar{are tangent} along $c(t)$, and $c(t)$ is a  principal curvature line of $S(t)$, it is a principal curvature line of $C$ as well by Joachimsthal theorem. 

  Let us compute the principal curvature $\kappa_1$ along $c(t)$. Let $p$ be a point on $c(t)$ and $T$ be the tangent line to $C$ through $p$. Then by Meusnier's theorem, the osculating circle of $c(t)$ lies on the sphere of radius $1/\kappa_1$ that \bluevar{is tangent to} $C$ at $p$. On the other hand, $c(t)$, hence its osculating circle, lies on the sphere $S(t)$ of radius $r(t)$ that \bluevar{is also tangent to} $C$ at $p$. This implies that $\kappa_1=1/r(t)$.

  Distinct characteristics $c(t_1)$ and $c(t_2)$ cannot have two common points or \bluevar{be tangent to each other}; otherwise $S(t_1)$ and $S(t_2)$ coincide and  \bluevar{would not be tangent to} $C$ along a single closed curve. \bluevar{For $t_2$ close to $t_1$, the curves $c(t_1)$ and $c(t_2)$ cannot have a unique non-tangential intersection point because a neighborhood of $c(t_1)$ is tangent to $S(t_1)$ along $c(t_1)$ and hence is orientable. By continuity, for each $t_2\ne t_1$ the curves $c(t_1)$ and $c(t_2)$ are either disjoint or coincide.}

  Let us compute $c(t)$. The sphere $S(t)$ has the equation $(\mathbf{x}-s(t))^2=r(t)^2$, hence $c(t)$ is contained in the intersection of $S(t)$ with the set $(\mathbf{x}-s(t))s'(t)=r'(t)r(t)$; cf.~\cite[Section~5.13]{bruce-giblin:1984}. If $s'(t)\ne 0$, the latter is a plane orthogonal to $s'(t)$, hence $c(t)$ is a circle with the axis parallel to $s'(t)$ because $c(t)$ is a closed curve by the definition of a channel surface. If $s'(t)=0$, then $c(t)$ is still a circle as a closed curve 
  containing the limit of circles $c(t_n)$, where $t_n\to t$ and $s'(t_n)\ne 0$ (the limit exists by analyticity and does not degenerate to a point because 
  a family of pairwise disjoint circles disjoint from a curve on a surface cannot shrink to a point on the curve).

    
  Finally, if $r'(t)\ne 0$ then $s'(t)\ne 0$ because $c(t)\ne\emptyset$. 
\end{proof}

We now prove the theorem modulo a technical lemma and then the lemma.

\begin{proof}[Proof of Theorem~\ref{th-Euclidean-Weingarten}]
%
Consider an analytic channel Weingarten surface $C$ and a sphere $S=S(t)$ of the enveloping family. They \bluevar{are tangent} along the characteristic $c=c(t)$. 
By Lemma~\ref{l-characteristic} the principal curvature along the circle $c$, say $\kappa_1=\kappa_1(t)$, is constant (it is equal to the inverse of the radius of $S(t)$). 
If the Weingarten relation is not $\kappa_1=\mathrm{const}$ then the other principal curvature $\kappa_2=\kappa_2(t)$ has to be constant along $c$. 

First, let us consider the \emph{general-position case}: $\kappa_1(t),\kappa_2(t),\kappa_1'(t),\kappa_2'(t)\ne 0$ and $\kappa_1(t)\ne\kappa_2(t)$ for all $t$.  
At each point of $c$, draw an osculating circle of the section of $C$ by the plane orthogonal to $c$. 
Since all such circles have curvature $\kappa_2$ and \bluevar{are tangent to} the sphere $S$, they are obtained by rotations of one circle about the axis of $c$ and form a rotational surface $D$. 
By construction, $C$ and $D$ have a second-order contact along $c$. 
Then Lemma~\ref{l-osculating-Dupin-cyclide} below asserts that at least in the general-position case, the spine curve $s(t)$ of $C$ has a second-order contact with the spine curve of $D$, i.e., the axis of $c(t)$.  Since $s(t)$ has a second-order contact with a straight line for all $t$, it is a straight line and $C$ is a rotational surface. 

Next, let us reduce the theorem to the above general-position case. 

If  $\kappa_1(t)\equiv\kappa_2(t)$ as functions in $t$, then $C$ is a subset of a sphere because $\kappa_1(t)\ne 0$. 

If $\kappa_1(t)=\mathrm{const}$, then $C$ is a pipe surface because $1/\kappa_1(t)$ is the radius of $S(t)$.

If $\kappa_2(t)=\mathrm{const}$, then $C$ is a rotational surface. Indeed, consider a  principal curvature line orthogonal to $c(t)$ and let $p(t)$ be its intersection point with $c(t)$. Let $n(t)$ be the surface unit normal at $p(t)$. Then $n'(t)=\kappa_2(t)p'(t)$. Hence the curvature center $p(t)-n(t)/\kappa_2(t)=\mathrm{const}$ (or $n(t)=\mathrm{const}$ for $\kappa_2(t)=0$). Thus the principal curvature line lies on the sphere of radius $1/\kappa_2(t)$ \bluevar{that is tangent to} the surface (or on a plane for $\kappa_2(t)=0$). For the other principal curvature lines orthogonal to $c(t)$, such spheres (or planes) are obtained by rotations about the axis of the circle $c(t)$. Then $C$ is \bluevar{a subset of} the envelope of those spheres (or planes) and hence it is rotational. 

Otherwise, restrict the range of $t$ to an interval where the above general-position assumptions are satisfied. The envelope of the resulting sub-family $S(t)$ is rotational by the above general-position case. Then the whole $C$ is rotational by the analyticity. 

Finally, for a CRPC surface, the condition $\kappa_1(t)=\mathrm{const}$ implies $\kappa_2(t)=\mathrm{const}$ and yields again to a rotational surface.
\end{proof}

The deep idea beyond this proof is that a channel surface $C$ has a second-order contact with the \emph{osculating Dupin cyclide} $D$ \cite{blaschke-1929} along a characteristic, and the spine curves of $C$ and $D$ also have a second-order contact. Here $D$ is the limit of the envelope of all spheres tangent to three spheres $S(t_1)$, $S(t_2)$, $S(t_3)$ for $t_1,t_2,t_3$ tending to $t$. The spine curves of $C$ and the envelope have $3$ colliding common points $s(t_1),s(t_2),s(t_3)$, leading to a second-order contact. For a Weingarten surface $C$, the osculating Dupin cyclides $D$ turn out to be rotational, with straight spine curves, hence the same must be true for~$C$ itself.

Although this construction gives geometric insight, passing to the limit $t_1,t_2,t_3\to t$ rigorously is a bit technical. Thus we complete the proof by a different argument relying on additional curvature assumptions.

\begin{lemma}\label{l-osculating-Dupin-cyclide}
    Under the notation of the proof of Theorem~\ref{th-Euclidean-Weingarten}
    and the assumptions $\kappa_1(t),\kappa_2(t),\kappa_1'(t),\kappa_2'(t)\ne 0$ and $\kappa_1(t)\ne\kappa_2(t)$ for all $t$, the spine curve $s(t)$ has a second-order contact with the axis of the circle $c(t)$.
\end{lemma}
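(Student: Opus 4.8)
The plan is to turn the lemma into a pointwise statement about the curvature of the spine curve. The center $s(t_0)$ of the sphere $S(t_0)$ is equidistant from all points of $c(t_0)$, hence lies on the axis of that circle; and by Lemma~\ref{l-characteristic} this axis is parallel to $s'(t_0)$ (which is nonzero, since $\kappa_1'\ne0$ gives $r'\ne0$). Thus the axis of $c(t_0)$ is exactly the tangent line of the spine at $t_0$, and the asserted second-order contact holds if and only if $s''(t_0)$ is parallel to $s'(t_0)$, i.e.\ $s'(t_0)\times s''(t_0)=0$. Applied at every $t_0$, this says the spine has everywhere vanishing curvature, so it is a straight line --- which is precisely how the theorem is concluded.

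To compute $s'$ and $s''$ I would use principal-curvature coordinates $(t,v)$ near $c=c(t_0)$, with the $v$-lines the characteristics (principal curvature $\kappa_1$) and the $t$-lines the orthogonal principal curves (principal curvature $\kappa_2$). Writing $\mathbf{x}(t,v)$ for the position, $E\,dt^2+G\,dv^2$ for the first fundamental form and $n$ for the unit normal, one has $s=\mathbf{x}+r\,n$ with $r=1/\kappa_1$ (so that $s_v=(1-r\kappa_1)\mathbf{x}_v=0$, i.e.\ $s$ is constant along each characteristic), together with the Rodrigues equations $n_v=-\kappa_1\mathbf{x}_v$ and $n_t=-\kappa_2\mathbf{x}_t$. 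Since $\kappa_1$ is constant along each characteristic (Lemma~\ref{l-characteristic}) and $C$ is Weingarten, $\kappa_2$ is constant along each characteristic as well; hence $\kappa_1,\kappa_2,r$ are functions of $t$ only. Differentiating $s$ twice along a $t$-line, using the Rodrigues equations and $\mathbf{x}_{tt}=\tfrac{E_t}{2E}\mathbf{x}_t-\tfrac{E_v}{2G}\mathbf{x}_v+\kappa_2E\,n$, gives
\begin{equation*}
s'=\alpha\,\mathbf{x}_t+r'\,n,\qquad s''=\Bigl(\alpha\tfrac{E_t}{2E}-2r'\kappa_2-r\kappa_2'\Bigr)\mathbf{x}_t-\alpha\tfrac{E_v}{2G}\,\mathbf{x}_v+\bigl(\alpha\kappa_2E+r''\bigr)n,
\end{equation*}
where $\alpha=(\kappa_1-\kappa_2)/\kappa_1\ne0$ by general position.

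Next I would remove the $\mathbf{x}_v$-component of $s''$. The Codazzi equation $\partial_v\kappa_2=\tfrac{E_v}{2E}(\kappa_1-\kappa_2)$, together with $\partial_v\kappa_2=0$ (from the Weingarten relation and $\partial_v\kappa_1=0$) and $\kappa_1\ne\kappa_2$, forces $E_v=0$ along $c$. Then both $s'$ and $s''$ lie in $\mathrm{span}(\mathbf{x}_t,n)$, and $s'\times s''=0$ is equivalent to the vanishing of the $2\times2$ determinant of their $(\mathbf{x}_t,n)$-coordinates, i.e.\ to the single identity
\begin{equation*}
\alpha\bigl(\alpha\kappa_2E+r''\bigr)=r'\Bigl(\alpha\tfrac{E_t}{2E}-2r'\kappa_2-r\kappa_2'\Bigr).
\end{equation*}

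The main obstacle is establishing this identity. It couples the second transverse derivative $r''$ (which involves $\kappa_1''$), the first transverse derivatives $r'$ (involving $\kappa_1'$) and $\kappa_2'$, and the metric coefficient $E_t$ --- data of third differential order that is \emph{not} controlled by the second-order contact with the rotational surface $D$ (indeed $D$ has $\kappa_2'=0$ while $C$ has $\kappa_2'\ne0$, so the osculating-Dupin-cyclide picture only motivates, but does not prove, the statement). I would instead close it with the remaining structure: the second Codazzi equation $\partial_t\kappa_1=\tfrac{G_t}{2G}(\kappa_2-\kappa_1)$; the Weingarten relation $f(\kappa_1,\kappa_2)=0$, which expresses $\kappa_2'$ --- and, after one further differentiation, $r''$ --- through $\kappa_1'$ and $\kappa_1''$; and the circularity of $c$ (planarity and constant radius), whose first-order form $s'\perp\mathbf{x}_{vv}$ already yields $r'\kappa_1G=\alpha G_t/2$ and whose second-order form ties $E,E_t$ to the curvatures. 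The general-position hypotheses keep $\alpha$ and all denominators nonzero, so after substitution the identity should reduce to a polynomial relation made trivially true by the structure equations. Verifying this cancellation cleanly is the technical heart of the lemma.
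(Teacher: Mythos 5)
Your framework is sound and genuinely different from the paper's: the reduction of the lemma to the pointwise condition $s'(t_0)\times s''(t_0)=0$ (using Lemma~\ref{l-characteristic} to identify the axis of $c(t_0)$ with the tangent line of the spine), the frame computation of $s'$ and $s''$ in principal coordinates, and the deduction $E_v=0$ from the first Codazzi equation are all correct. The paper argues in a completely different, synthetic way: it cuts the configuration by a plane through the axis of $c(0)$, compares the section circle $S_1(t)$ of $S(t)$ with the osculating circles $D_1(0),D_2(0)$ of the two section curves via the Tait--Kneser theorem, and converts second-order contact of curves with their osculating circles into an $O(t^3)$ bound on the distance from the center $s_1(t)$ to the symmetry axis. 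However, your proof stops exactly at its pivotal step: the identity
\begin{equation*}
\alpha\bigl(\alpha\kappa_2E+r''\bigr)=r'\Bigl(\alpha\tfrac{E_t}{2E}-2r'\kappa_2-r\kappa_2'\Bigr)
\end{equation*}
\emph{is} the lemma in your coordinates, and you only conjecture that it ``should reduce to a polynomial relation made trivially true by the structure equations.'' That is a genuine gap, not a finishing touch. Moreover, the toolkit you propose for closing it is partly misdirected: your ``first-order circularity'' relation $r'\kappa_1G=\alpha G_t/2$ becomes, after substituting $r=1/\kappa_1$ and $\alpha=(\kappa_1-\kappa_2)/\kappa_1$, exactly the second Codazzi equation $\partial_t\kappa_1=\tfrac{G_t}{2G}(\kappa_2-\kappa_1)$, so it carries no new information; and differentiating an unknown Weingarten relation $f(\kappa_1,\kappa_2)=0$ is not needed at all. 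What your list omits is the Gauss equation, and that is precisely the missing ingredient.

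The gap is fillable, after which your route gives a complete proof. Since $E_v=0$, reparametrize $t$ so that $E\equiv1$ (this multiplies $s'\times s''$ by a nonzero factor only). The second Codazzi equation shows $\tfrac{G_t}{2G}=\tfrac{\kappa_1'}{\kappa_2-\kappa_1}$ is a function of $t$ alone, so after reparametrizing $v$ one may write $G=g(t)^2$; then the theorema egregium reads $g''=-\kappa_1\kappa_2\,g$, while Codazzi reads $\kappa_1'=u\,(\kappa_2-\kappa_1)$ with $u:=g'/g$, whence $u'=-\kappa_1\kappa_2-u^2$ and
\begin{equation*}
\kappa_1''=-\kappa_1\kappa_2(\kappa_2-\kappa_1)-2u^2(\kappa_2-\kappa_1)+u\kappa_2'.
\end{equation*}
Substituting these together with $r=1/\kappa_1$, $r'=-\kappa_1'/\kappa_1^2$, $\alpha=(\kappa_1-\kappa_2)/\kappa_1$ into your identity (where now $E_t=0$), every term cancels identically: the terms involving $\kappa_2'$ cancel in a pair, the terms $\kappa_1^2\kappa_2(\kappa_2-\kappa_1)^2$ cancel in a pair, and the remaining three terms carry the common factor $2u^2(\kappa_2-\kappa_1)^2$ times $\bigl(\kappa_2-\kappa_1-(\kappa_2-\kappa_1)\bigr)=0$. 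So no Weingarten relation and no property of $c(t)$ beyond what you already used is required; in fact your argument then proves slightly more than the lemma, namely that on any surface where both principal curvatures are constant along one family of curvature lines (under your nondegeneracy hypotheses) the centers of the corresponding curvature spheres lie on a straight line, whereas the paper's synthetic proof is tied to the channel-surface setting and its planar circle geometry. As submitted, though, the proposal establishes the setup but not the cancellation on which everything rests.
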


\begin{proof} 
In what follows we fix a value of $t$, say, $t=0$, and assume that $t$ is sufficiently close to this value. It is also convenient to assume that $\kappa_2'(0)(\kappa_1(0)-\kappa_2(0))>0$ and $t>0$. Since $\kappa_1'(0)\ne 0$, by Lemma~\ref{l-characteristic} it follows that $s'(0)$ is nonzero and parallel to the axis of $c(0)$. 
It remains to prove that the distance from $s(t)$ to the axis is $O(t^3)$. 

We do it by reducing to a planar problem; see Fig.~\ref{fig:2} to the left. 
\bluevar{Take a plane $P$ passing through $s(0)$ and parallel to $s'(0)$ and $s''(0)$ (an ``osculating plane'' of $s(t)$). It contains the axis of $c(0)$.}
The section of $c(t)$ by the plane $P$ consists of two points $c_1(t)$ and $c_2(t)$. Let $c_1$ and $c_2$ be the two curves formed by these points. The section of $C$ by $P$ coincides with $c_1$ and $c_2$ because the characteristics cover the surface. 
The section of $S(t)$ is a circle $S_1(t)$ \bluevar{tangent to} $c_1$ and $c_2$ at $c_1(t)$ and $c_2(t)$. 
Let $D_1(t)$ and $D_2(t)$ be the osculating circles of $c_1$ and $c_2$ at $c_1(t)$ and $c_2(t)$. 
Orient each circle $S_1(t)$ counterclockwise and fix
the orientations of $D_1(t)$ and $D_2(t)$ such that the contacts are oriented. Let $s_1(t)$ be the center of $S_1(t)$. \bluevar{Since $P\parallel s'(0), s''(0)$, 
it follows that the distance between $s_1(t)$ and $s(t)$ is $O(t^3)$. Thus}
it suffices to prove that the distance from $s_1(t)$ to the bisector of $c_1(0)c_2(0)$ is $O(t^3)$. 
\begin{figure}[h]
\hfill
\begin{overpic}[width=0.3\textwidth,  ]{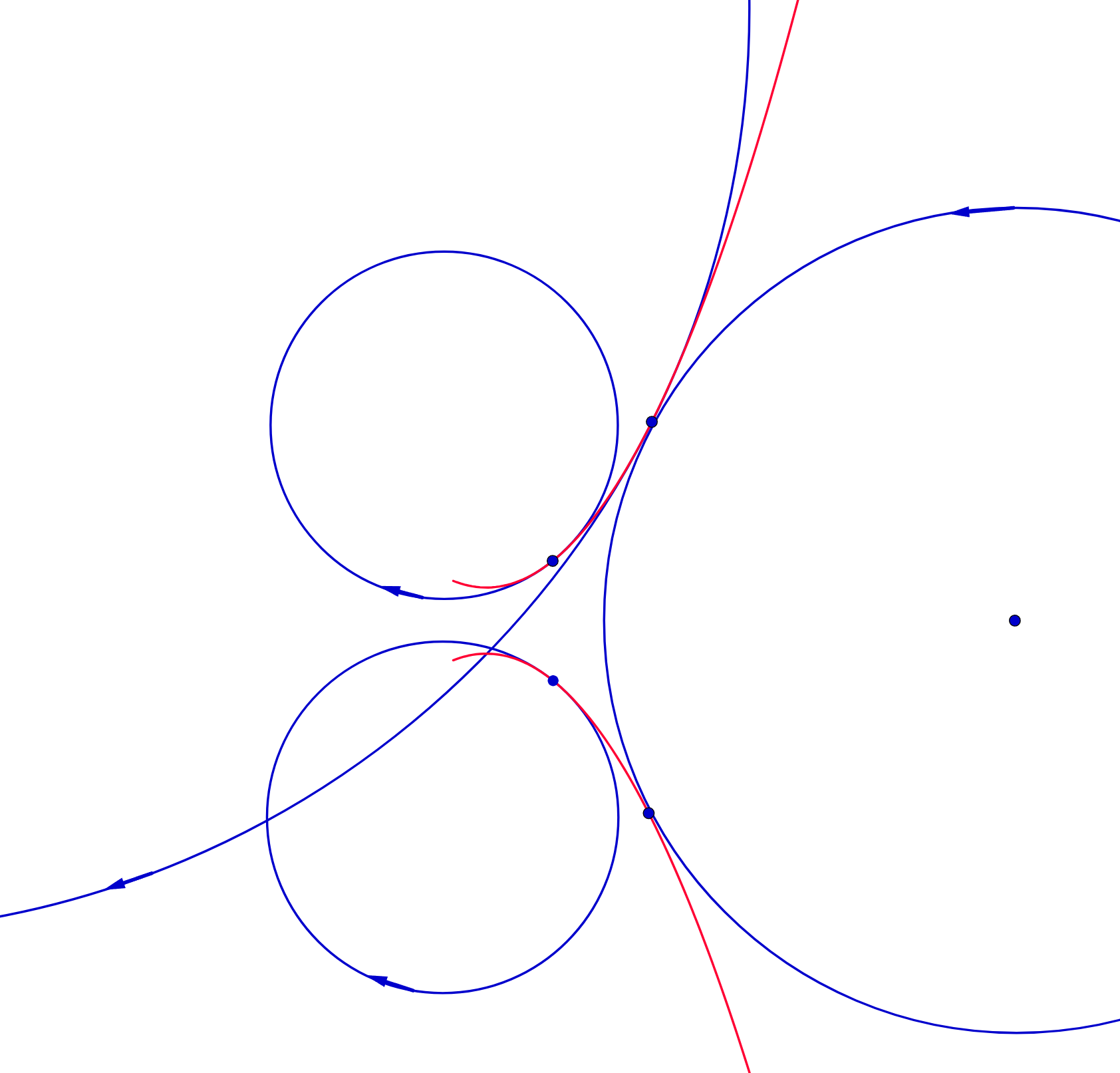}
   \small \put(-6,22){\contour{white}{$D_1(t)$}}
     \put(27,-1){\contour{white}{$D_2(0)$}}
     \put(27,77){\contour{white}{$D_1(0)$}}
     \put(33,50){\contour{white}{$c_1(0)$}}
     \put(34,25){\contour{white}{$c_2(0)$}}
     \put(92,40){\contour{white}{$s_1(t)$}}
     \put(78,81){\contour{white}{$S_1(t)$}}
     \put(68,1){\contour{white}{$c_2$}}
     \put(72,91){\contour{white}{$c_1$}}
     \put(59, 55){\contour{white}{$c_1(t)$}}
     \put(60,24){\contour{white}{$c_2(t)$}}
\end{overpic}
  \hspace{0.1cm}
  \hfill{}
  \begin{overpic}[width=0.3\textwidth, ]{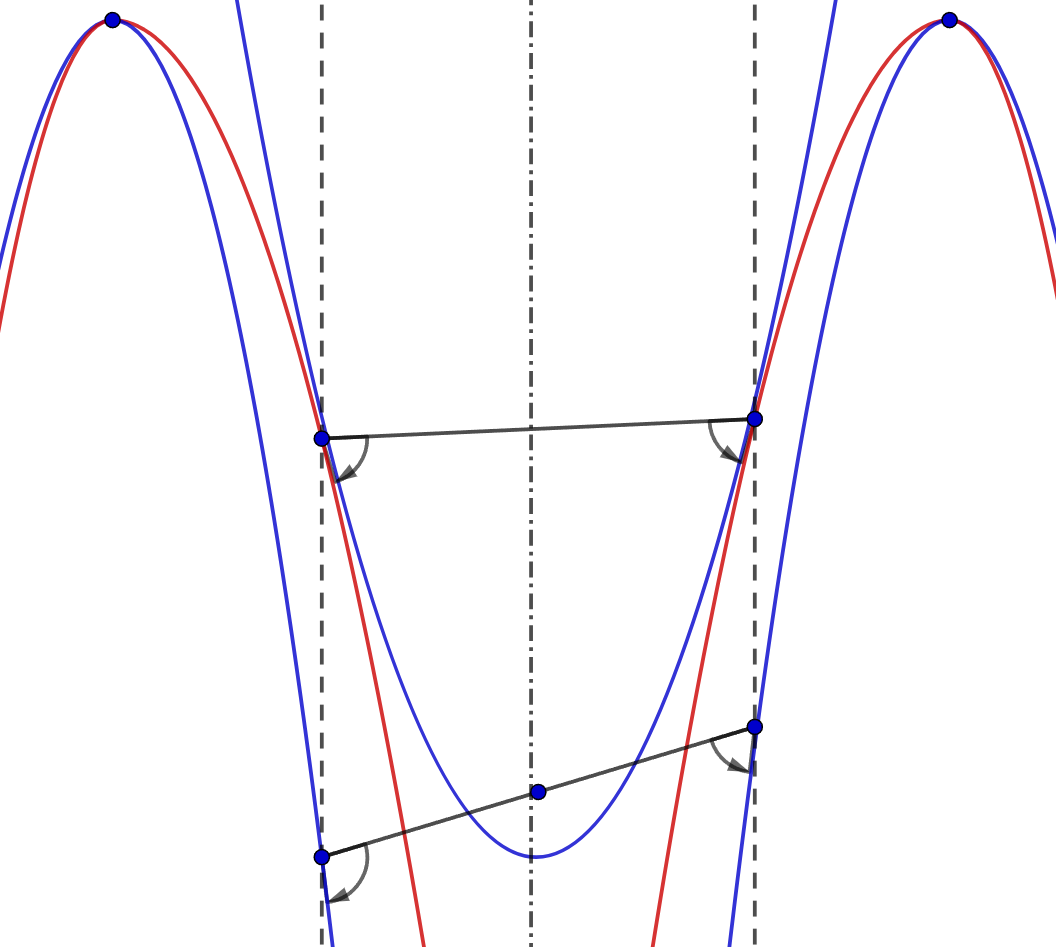}
   \small \put(5,91){\contour{white}{$c_2(0)$}}
   \put(83,91.5){\contour{white}{$c_1(0)$}}
   \put(51,75){\contour{white}{$L$}}
  \put(9,60){\contour{white}{$D_2$}}
  \put(79,60){\contour{white}{$D_1$}}
  \put(31,52){\contour{white}{$c_2(t)$}}
  \put(52,53){\contour{white}{$c_1(t)$}}
  \put(41,2){\contour{white}{$c_2$}}
  \put(53,2){\contour{white}{$c_1$}}
  \put(10,6){\contour{white}{$p_2(t)$}}
  \put(73,20){\contour{white}{$p_1(t)$}}
  \put(50.25,88){\color{blue}\circle{1}}%
  \put(48,92){\contour{white}{$s(0)$}}
\linethickness{0.05mm}	
	\put(34,6){\color{black}\line(1,-3){4}}
 \put(35,-11){\contour{white}{$\alpha_1(t)$}}
 \linethickness{0.05mm}	
	\put(69,17){\color{black}\line(-1,-3){8}}
 \put(58,-11){\contour{white}{$\alpha_2(t)$}}
\end{overpic}
\hfill{}
  \begin{overpic}[width=0.3\textwidth, ]{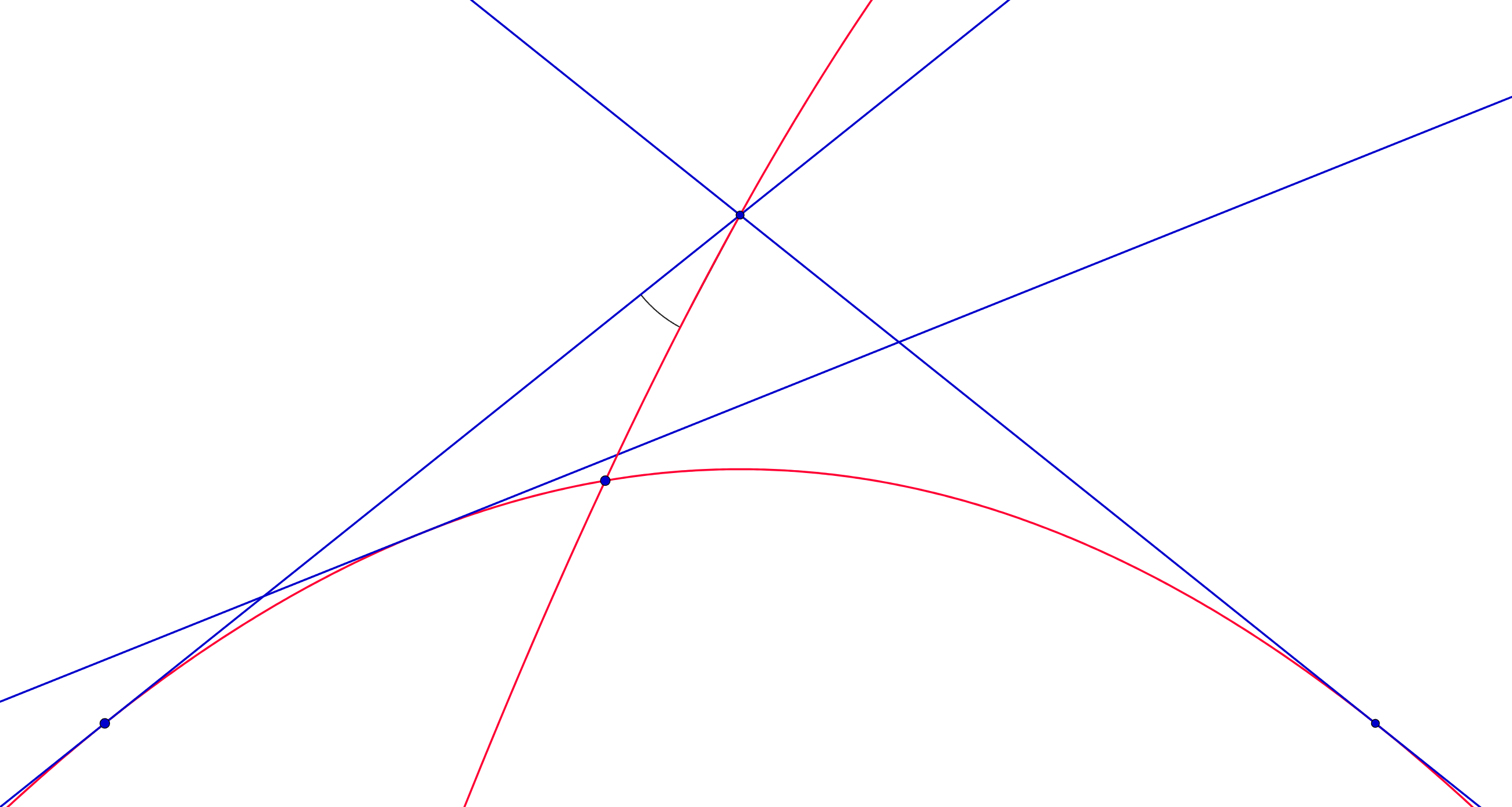}
   \small 
     \put(33,54){\contour{white}{$R_{b}'$}}
     \put(51,53){\contour{white}{$\alpha'$}}
     \put(64,45){\contour{white}{$R_{a}'$}}
     \put(90,49){\contour{white}{$R_{t}'$}}
     \put(34,34.5){\contour{white}{$C'$}}
     \put(6,1){\contour{white}{$A'$}}
     \put(90,7){\contour{white}{$B'$}}
     \put(39,13){\contour{white}{$O'$}}
     \put(65,13){\contour{white}{$e$}}
     \put(37.5,26){\contour{white}{$\gamma$}}
\end{overpic}
  \hfill{}
  \vspace{0.4cm}
    \caption{The notation in the proofs of Lemmas~\ref{l-osculating-Dupin-cyclide}, \ref{l-isotropic-circle-center}, and~\ref{l-conoid} (from the left to the right).}
    \label{fig:2}
\end{figure}

Next, let us prove that $S_1(t)$ and $D_1(0)$ are disjoint.
For this purpose, we compute the derivative of the signed curvature $k_1(t)$ of the curve $c_1(t)$ at $t=0$. By the Euler and Meusnier theorems, we get 
$$k_1(t)=
\frac{\kappa_1(t)\cos^2 \angle(c_1,c(t)) +\kappa_2(t)\sin^2\angle(c_1,c(t))}
{\cos\angle (s(t)c_1(t),P)}.
$$
Differentiating, we get $k_1'(0)=\kappa_2'(0)\ne 0$ because $\angle(c_1,c(0))=\pi/2$ and $\angle (s(0)c_1(0),P)=0$. 
Since the signed curvatures of $S_1(0)$ and $D_1(0)$ are 
$\kappa_1(0)$ and $\kappa_2(0)$ respectively, by the assumption $\kappa_2'(0)(\kappa_1(0)-\kappa_2(0))>0$ it follows that the signed curvature of $D_1(t)$ is between the signed curvatures of $D_1(0)$ and $S_1(t)$ for $t>0$ small enough. By the Tait--Kneser theorem, $D_1(t)$ is disjoint from $D_1(0)$, and by construction, $D_1(t)$ has an oriented contact with $S_1(t)$. Thus $D_1(0)$ and $S_1(t)$ are separated by $D_1(t)$, hence are disjoint.  

Finally, we estimate the distance from $s_1(t)$ to the bisector of $c_1(0)c_2(0)$. Since $c_1$ and $D_1(0)$ have second-order contact, it follows that 
$c_1(t)$ is within distance $O(t^3)$ from $D_1(0)$. Then the distance between the disjoint circles $S_1(t)$ and $D_1(0)$ is $O(t^3)$. Analogously, the distance between $S_1(t)$ and $D_2(0)$ is $O(t^3)$. Then the difference in the distances from the center $s_1(t)$ to the centers of $D_1(0)$ and $D_2(0)$ is $O(t^3)$. Since $D_1(0)$ and $D_2(0)$ are symmetric with respect to the bisector of $c_1(0)c_2(0)$, it follows that 
the distance from $s_1(t)$ to the bisector is $O(t^3)$, which proves the lemma.
\end{proof}

\subsection{Surfaces with both isotropic principal curvatures constant}

As a motivation for studying isotropic channel surfaces,
and also as one step in their classification, let us find all surfaces with both isotropic principal curvatures constant.

\begin{theorem}\label{l-both-curvatures-constant}
An admissible surface has constant nonzero isotropic principal curvatures $\kappa_1$ and $\kappa_2$ if and only if it is isotropic congruent to a subset of the paraboloid
$
    2z = \kappa_1 x^2+\kappa_2 y^2.
$    
\end{theorem}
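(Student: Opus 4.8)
The plan is to reduce everything to the Hessian $\nabla^2(f)$ of a local graph representation $z=f(x,y)$, using that the isotropic shape operator \emph{is} this Hessian and that $\kappa_1,\kappa_2$ are its eigenvalues. The \emph{if} direction is immediate: for $2z=\kappa_1x^2+\kappa_2y^2$ one has $\nabla^2(f)=\mathrm{diag}(\kappa_1,\kappa_2)$, whose eigenvalues are the constants $\kappa_1,\kappa_2$, and this property survives under isotropic congruences. For the \emph{only if} direction it therefore suffices to show that $f$ must be a quadratic polynomial with constant Hessian of eigenvalues $\kappa_1,\kappa_2$; the reduction to the stated paraboloid is then purely algebraic.

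The conceptual heart is that constant eigenvalues force both $H=(\kappa_1+\kappa_2)/2$ and $K=\kappa_1\kappa_2$ to be constant. Since $H=(f_{xx}+f_{yy})/2$, the function $f$ solves $\Delta f=2H=\mathrm{const}$, so $u:=f-\tfrac{H}{2}(x^2+y^2)$ is harmonic, hence real-analytic by elliptic regularity. Its Hessian $\nabla^2(u)=\nabla^2(f)-H\,\mathrm{Id}$ is trace-free, and $\det\nabla^2(u)=K-H^2=-\big((\kappa_1-\kappa_2)/2\big)^2$ is constant. Writing $a:=u_{xx}=-u_{yy}$ and $b:=u_{xy}$, this reads $a^2+b^2=H^2-K=\mathrm{const}$. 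I would then consider $g:=u_x-i\,u_y$, which is holomorphic in $x+iy$ because $u$ is harmonic, so its derivative $g'=u_{xx}-i\,u_{xy}=a-ib$ is holomorphic with constant modulus $\lvert g'\rvert^2=a^2+b^2$. A holomorphic function of constant modulus is constant (identically zero when $\kappa_1=\kappa_2$, and constant by the open mapping theorem otherwise, since its image cannot fill an open set while lying on a circle). Hence $a,b$ are constant, so $u_{xx},u_{xy},u_{yy}$ are constant and $u$, hence $f$, is a quadratic polynomial with constant Hessian. The same conclusion follows by a real-variable route: the compatibility conditions $\partial_y f_{xx}=\partial_x f_{xy}$ and $\partial_y f_{xy}=\partial_x f_{yy}$, after substituting $a=\rho\cos2\theta,\ b=\rho\sin2\theta$ with $\rho=\sqrt{H^2-K}$, reduce to a rotation matrix annihilating $(\theta_x,\theta_y)$, forcing the principal-direction angle $\theta$ to be constant.

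It remains to normalize $f$. Its constant Hessian is a symmetric matrix with eigenvalues $\kappa_1,\kappa_2$; a suitable rotation about the $z$-axis (an isotropic congruence) diagonalizes it, turning $f$ into $\tfrac12(\kappa_1x^2+\kappa_2y^2)$ plus an affine function. The linear part is removed by a shear $z\mapsto z-px-qy$ and the constant by a vertical translation, both in $G^6$, giving $2z=\kappa_1x^2+\kappa_2y^2$. To pass from a local graph to the whole connected surface, I would observe that the constant-Hessian conclusion holds on each graph chart and that overlapping charts share an open set, hence prescribe the same quadratic; by analyticity the entire surface lies on a single paraboloid.

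I expect no serious obstacle: the theorem is a rigidity statement whose only real content is converting the pointwise curvature constraint into global flatness of the Hessian, and the holomorphic-modulus step (equivalently, constancy of $\theta$) does exactly this in one line. The remaining delicacy is merely organizational — checking that the required transformations are isotropic congruences and that the local quadratic pieces patch into one paraboloid.
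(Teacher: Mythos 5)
Your proposal is correct, but it takes a genuinely different route from the paper. The paper's proof is synthetic: it shows (Lemma~\ref{constant_isphere}) that along an isotropic principal curvature line with constant curvature $\kappa_1\ne 0$ all touching parabolic isotropic spheres of radius $1/\kappa_1$ coincide, so the surface carries two families of congruent touching isotropic spheres; by Lemma~\ref{ip-curvature} one characteristic is an arc of a parabolic isotropic circle, the spheres of the other family are then obtained from one another by parabolic rotations, so the surface is locally parabolic rotational, and Proposition~\ref{thm-parabolic-rotational} finishes the argument. You instead convert the hypothesis into the two scalar constraints $\Delta f=2H=\mathrm{const}$ and $\det\nabla^2 f=K=\mathrm{const}$, subtract the quadratic $\tfrac{H}{2}(x^2+y^2)$ to get a harmonic $u$, and observe that $g'=u_{xx}-iu_{xy}$ is holomorphic of constant modulus $\sqrt{H^2-K}$, hence constant — so $\nabla^2 f$ is constant and $f$ is quadratic; your normalization by a rotation about the $z$-axis, a shear $z\mapsto z-px-qy$, and a vertical translation (all in $G^6$) is exactly right, and your chart-patching remark handles globalization (which the paper leaves equally implicit in its appeal to Proposition~\ref{thm-parabolic-rotational}). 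Your route is shorter and self-contained: it needs no channel-surface machinery, and it \emph{derives} analyticity from elliptic regularity rather than invoking it. What it does not buy is the structural payoff the paper wants: the authors state explicitly that this theorem is where the notion of an isotropic channel surface ``naturally arises,'' and their spheres-and-parabolic-rotations argument sets up Lemmas~\ref{constant_isphere} and~\ref{ip-curvature}, which are reused for Theorem~\ref{th-isotropic-weingarten}. Two trivia: in your real-variable variant the matrix $\begin{pmatrix}\cos 2\theta & \sin 2\theta\\ \sin 2\theta & -\cos 2\theta\end{pmatrix}$ is a reflection (determinant $-1$), not a rotation — invertibility is all you need; and the umbilic case $\kappa_1=\kappa_2$ is covered by your $g'\equiv 0$ branch, matching the theorem's statement, which permits equal curvatures.
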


Let us see how the notion of an isotropic channel surface naturally arises in the proof of this theorem.

\begin{lemma}\label{constant_isphere}
Assume that the isotropic principal curvature $\kappa_1$ 
along an isotropic principal curvature line $c$ of an admissible surface $C$ \bluevar{is constant and nonzero}. At each point of $c$, take the parabolic isotropic sphere with the radius $1/\kappa_1$ that \bluevar{is tangent to} $C$ at this point. Then all these isotropic spheres coincide.
\end{lemma}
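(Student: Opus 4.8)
The plan is to work in a graph representation and show directly that the parabolic isotropic sphere of radius $1/\kappa_1$ tangent to $C$ at a point $p\in c$ does not depend on the choice of $p$. Write the surface locally as $z=f(x,y)$ and parametrize the top view of $c$ by $\bar c(s)=(x(s),y(s))$, so that $p(s)=(x(s),y(s),f(x(s),y(s)))$. A parabolic isotropic sphere of radius $1/\kappa_1$ is the graph of $g(x,y)=\tfrac{\kappa_1}{2}(x^2+y^2)+\beta x+\gamma y+\delta$; the requirement that it touch $C$ at $p(s)$ (same point and same tangent plane) forces
\[ \beta(s)=f_x-\kappa_1 x(s),\qquad \gamma(s)=f_y-\kappa_1 y(s), \]
together with $\delta(s)$ fixed by the condition $g(p(s))=f(p(s))$, where $f,f_x,f_y$ are evaluated at $p(s)$. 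Since the leading coefficient $\kappa_1$ is already fixed, it suffices to prove that $\beta$, $\gamma$, and $\delta$ are constant in $s$.

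The key ingredient is the isotropic analog of Rodrigues' formula. Because the isotropic shape operator is the Hessian $\nabla^2(f)$ and the isotropic principal directions are its eigendirections, along the principal curvature line $c$ the top-view tangent $\bar c'(s)$ is an eigenvector of $\nabla^2(f)$ with eigenvalue $\kappa_1$. Reading off the differential of the gradient map $(x,y)\mapsto(f_x,f_y)$, this yields
\[ \frac{d}{ds}\bigl(f_x,f_y\bigr)=\nabla^2(f)\cdot\bar c'(s)=\kappa_1\,\bigl(x'(s),y'(s)\bigr), \]
where on the right $\kappa_1$ is the eigenvalue at $p(s)$, which by hypothesis is constant along $c$ and therefore coincides with the fixed radius parameter $1/\kappa_1$ of the sphere.

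With this relation in hand the computation is routine. Differentiating $\beta$ and $\gamma$ and substituting $\tfrac{d}{ds}f_x=\kappa_1 x'$ and $\tfrac{d}{ds}f_y=\kappa_1 y'$ makes the two terms cancel, so $\beta'=\gamma'=0$. Substituting $\beta'=\gamma'=0$, together with $\tfrac{d}{ds}f=f_x x'+f_y y'$ and the definitions of $\beta,\gamma$, into the derivative of $\delta$ makes every remaining term cancel as well, so $\delta'=0$. Hence $\kappa_1,\beta,\gamma,\delta$ are all constant along $c$, the sphere $S(p)$ is independent of $p\in c$, and all the isotropic spheres coincide.

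I expect the only real subtlety — the main obstacle — to be in correctly identifying and justifying the isotropic Rodrigues relation, namely that moving along an isotropic principal curvature line changes the gradient $\nabla f$ by exactly $\kappa_1$ times the top-view displacement; this is where the identification of the shape operator with the Hessian is used. One should also note that the hypothesis ``$\kappa_1$ constant along $c$'' is precisely what makes the eigenvalue at $p(s)$ agree with the fixed radius of the sphere, and it is this agreement that produces the cancellations. Once the Rodrigues relation is established, the vanishing of $\beta',\gamma',\delta'$ is a short direct calculation.
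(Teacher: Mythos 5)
Your proof is correct and takes essentially the same route as the paper's: the paper parametrizes the tangent parabolic isotropic sphere by its ``center'' $m$ and shows $m'(t)=0$ using exactly your Rodrigues relation (the eigenvector identity $\nabla^2(f)\cdot\bar c'=\kappa_1\bar c'$, which holds by the definition of the isotropic shape operator as the Hessian), and your coefficients $(\beta,\gamma,\delta)$ are just an affine reparametrization of that center. The cancellation computation you describe is the same one the paper carries out, so there is no gap.
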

\begin{proof}
By the \emph{center} of a parabolic isotropic sphere of radius $r$ we mean the point obtained from the vertex of the paraboloid by the translation by the vector $(0,0,r)$.
The unique parabolic isotropic sphere of radius $r$ with the center $m=(m_1,m_2,m_3)$ is given by the equation
\begin{equation}\label{eq-center}
   2(z-m_3+r) = \frac{1}{r}\left((x-m_1)^2+(y- m_2)^2\right).
\end{equation}
Beware that the notion of a center is not invariant under isotropic congruences, but similar notions are common in isotropic geometry \cite[Chapter 9]{sachs}.

The \emph{isotropic curvature center} of $C$ at a point $p = (x,y,z) \in c$ is the center of the parabolic isotropic sphere of radius $r=1/\kappa_1$ that \bluevar{is tangent to} $C$ at $p$. It is given by 
$$
m=
\left[
\begin{matrix}
      x - \frac{1}{\kappa_1} f_x \\
      y - \frac{1}{\kappa_1} f_y  \\
      z - \frac{1}{2\kappa_1}\left(f_x^2+f_y^2-2\right)
   \end{matrix}
   \right],
$$
where we locally represent $C$ as the graph of a function $z=f(x,y)$. This formula is obtained from the three equations~\eqref{eq-center}, $f_x= \kappa_1(x-m_1)$, $f_y= \kappa_1(y -m_2)$.

Now let $p(t) =(x(t), y(t), f(x(t),y(t)))$ run through the isotropic principal curvature line 
$c$ and let $m(t)$ be the corresponding isotropic curvature center. It suffices to prove that 
$m'(t)=0$. We omit the arguments of the functions $x,y,f$ in what follows.

Since $c$ is the isotropic principal curvature line, it follows that 
\begin{align*}
    \begin{cases}
      f_{xx}x' + f_{xy}y' = \kappa_1 x'\\
      f_{xy}x' + f_{yy}y' = \kappa_1 y',
    \end{cases}\,
\end{align*}
hence 
$$
m'=\left[
\begin{matrix}
x' - \frac{1}{\kappa_1}f_{xx}x'-\frac{1}{\kappa_1}f_{xy}y'\\
y' - \frac{1}{\kappa_1}f_{xy}x'-\frac{1}{\kappa_1}f_{yy}y'\\
f_{x}x' + f_{y}y' - f_{x}x' - f_{y}y'
\end{matrix}
   \right]
=0.
$$
\end{proof}

The meaning of this lemma is that $\kappa_1 = \mathrm{const}$ implies that the surface is essentially an isotropic pipe surface, which we are going to define now.

%
%
%
An \emph{isotropic channel surface} is the envelope of a smooth one-parameter family of parabolic isotropic spheres $S(t)$, i.e.~the surface $C$ \bluevar{tangent to} each isotropic sphere $S(t)$ along a single curve $c(t)$ 
without endpoints so that the curves $c(t)$ cover $C$. The curves $c(t)$ are called \emph{characteristics}.
If the radii of the isotropic spheres are constant, then 
$C$ is called an \emph{isotropic pipe surface}. 
%
%
%
%

To proceed, we need an isotropic analog of Lemma~\ref{l-characteristic} above.

\begin{lemma}\label{ip-curvature}
    Consider a characteristic $c(t)$ of an isotropic channel surface $C$. Let $r(t)$ be the radius of the isotropic sphere $S(t)$ that \bluevar{is tangent to} $C$ along~$c(t)$. Then $c(t)$ is an isotropic principal curvature line on $C$ and the isotropic principal curvature along $c(t)$ is $1/r(t)$.
    If $r'(t) \ne 0$ then $c(t)$ is 
    an elliptic isotropic circle.
    If $r(t)=\mathrm{const}$ then for some $t$ the curve $c(t)$ is 
    a parabolic isotropic circle.
\end{lemma}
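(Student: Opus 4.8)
The plan is to mirror the Euclidean proof of Lemma~\ref{l-characteristic}, replacing Euclidean spheres and Meusnier's theorem by their isotropic counterparts (parabolic isotropic spheres and Theorem~\ref{l-Meusnier}), and to exploit the explicit description of the envelope of a family of parabolic isotropic spheres. First I would establish that $c(t)$ is an isotropic principal curvature line: since $S(t)$ and $C$ touch along $c(t)$, and on the parabolic isotropic sphere $S(t)$ every curve is an isotropic principal curvature line (because both isotropic principal curvatures of $S(t)$ equal the constant $1/r(t)$, so every tangent direction is an isotropic principal direction), the curve $c(t)$ is an isotropic principal curvature line of $C$ as well. This is the isotropic analog of the Joachimsthal argument; alternatively one can invoke Lemma~\ref{constant_isphere}, which already says that along an isotropic principal curvature line with $\kappa_1=\mathrm{const}$ the touching parabolic isotropic sphere is fixed. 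To compute the isotropic principal curvature along $c(t)$, I would apply the isotropic Meusnier theorem (Theorem~\ref{l-Meusnier}): the osculating isotropic circle of $c(t)$ lies on the parabolic isotropic sphere of radius $1/\kappa_n$ touching $C$ at the point, but it also lies on $S(t)$ of radius $r(t)$, and since both spheres touch $C$ at the same point the radii agree, giving $\kappa_1 = 1/r(t)$.

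Next I would determine the shape of $c(t)$ by computing the envelope explicitly. Writing the family $S(t)$ via the center form~\eqref{eq-center} with center $m(t)=(m_1(t),m_2(t),m_3(t))$ and radius $r(t)$, the characteristic $c(t)$ is contained in the intersection of $S(t)$ with the locus obtained by differentiating the defining equation of $S(t)$ with respect to $t$ (the standard envelope condition $\partial_t F = 0$). Differentiating $2(z-m_3+r) = \frac{1}{r}\bigl((x-m_1)^2+(y-m_2)^2\bigr)$ in $t$ produces a linear equation in $x,y,z$ whenever the coefficient of $z$ does not vanish, i.e. a non-isotropic plane when $r'(t)\ne 0$, and an isotropic plane (no $z$ term) when $r'(t)=0$. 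In the former case, $c(t)$ lies in the intersection of the paraboloid $S(t)$ with a non-isotropic plane, which by the definition of isotropic circles in the excerpt is precisely an elliptic isotropic circle. In the latter case $r(t)=\mathrm{const}$, the envelope condition gives an isotropic plane, and the intersection of the parabolic isotropic sphere $S(t)$ with an isotropic plane is a parabolic isotropic circle; I would argue that for at least one value of $t$ the relevant plane is genuinely isotropic (rather than the degenerate situation where the center is stationary), yielding the claimed parabolic isotropic circle.

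The step I expect to be the main obstacle is handling the degenerate configurations in the constant-radius case. When $r(t)=\mathrm{const}$, the envelope condition involves only the derivatives $m_1'(t),m_2'(t)$; if the spine (center) curve happens to be momentarily stationary in the top view, or if $m'(t)$ is vertical, the differentiated equation may degenerate and fail to cut out a clean isotropic plane, so the naive computation does not immediately produce a parabolic isotropic circle at every $t$. This is why the statement is phrased as ``for some $t$'' rather than ``for all $t$'': I would argue that the top-view center curve cannot be identically constant (else $C$ would reduce to a single parabolic isotropic sphere, contradicting that the characteristics cover a genuine surface), so $m'(t)\ne 0$ in the top view for some $t$, and at such $t$ the envelope plane is a well-defined isotropic plane, giving the parabolic isotropic circle. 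One must also rule out the intersection degenerating to a point or a single line, which follows because $c(t)$ is a curve without endpoints covering a neighborhood on the surface. The remaining assertions are then routine: the statement is essentially the isotropic translation of Lemma~\ref{l-characteristic}, with the classification of isotropic circles from the Preliminaries doing the geometric bookkeeping.
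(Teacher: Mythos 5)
Your proposal follows the paper's proof essentially step for step: the isotropic Joachimsthal theorem for the principal-line claim, the isotropic Meusnier theorem (Theorem~\ref{l-Meusnier}) for $\kappa=1/r(t)$, and the envelope system (sphere equation plus its $t$-derivative) with the same case split on $r'(t)$, including the same ``otherwise all the spheres coincide and there is no envelope'' argument that yields the ``for some $t$'' conclusion in the constant-radius case. One small repair: differentiating the center form $2(z-m_3+r)=\tfrac1r\left((x-m_1)^2+(y-m_2)^2\right)$ literally produces a quadratic (not linear) equation in $x,y$ when $r'(t)\ne 0$ and never contains $z$; you must first normalize so the quadratic coefficient is $t$-independent (equivalently, as the paper does, subtract the sphere equation multiplied by $A'(t)/A(t)$), after which the $z$-term with coefficient proportional to $r'(t)$ appears and your stated conclusions go through unchanged.
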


\begin{proof}
  Since $c(t)$ is an isotropic principal curvature line of $S(t)$, then by the isotropic Joachimsthal theorem \cite[Section~36]{strubecker:1942a}, 
  it is an isotropic principal curvature line of $C$. 

  Let $p$ be a point on $c(t)$ and $T$ be the tangent line to $C$ through $p$. Then by Meusnier's theorem (Theorem~\ref{l-Meusnier}), the osculating isotropic circle of $c(t)$ lies on the isotropic sphere of radius $1/\kappa_n$ that \bluevar{is tangent to} $C$ at $p$. Here $\kappa_n$ is the isotropic principal curvature along $c(t)$ because $c(t)$ is an isotropic principal curvature line.

  On the other hand, $c(t)$, hence its osculating isotropic circle, lies on the isotropic sphere $S(t)$ of radius $r(t)$ that \bluevar{is also tangent to} $C$ at $p$. This implies that $\kappa_n=1/r(t)$. 

    

  Now let $S(t)$ have the equation $$2z =  A(t)\left(x^2+y^2\right)+B(t) x+C(t) y+D(t). $$ Then 
  $c(t)$ is contained in the set defined by the system (cf.~\cite[Section~5.13]{bruce-giblin:1984}) 
\begin{equation}
  \left\{\begin{array}{@{}l@{}}\label{envelope}
  A(t)\left(x^2+y^2\right)+B(t) x+C(t) y+D(t) -2z = 0,\\
  A^\prime(t)\left(x^2+y^2\right)+B^\prime(t)x+C^\prime(t) y+D^\prime(t) = 0.
  \end{array}\right.\,
\end{equation}
If $r'(t)\ne 0$ then $A'(t) =r^\prime(t)/r(t)^2 \ne 0$ because $r(t) = 1/A(t)$. Remove the quadratic terms in the second equation by subtracting the first equation with the coefficient $A^\prime(t)/A(t) \ne 0$. This introduces a term linear in $z$ into the second equation. Thus $c(t)$ is the intersection of $S(t)$ with a non-isotropic plane, hence an elliptic isotropic circle. 

Finally, assume that $r(t)=\mathrm{const}$ so that $A'(t) =0$. There exists $t$ such that at least one of the derivatives $B'(t),C'(t),D'(t)\ne 0$, otherwise all $S(t)$ coincide and there is no envelope. Then the second equation of~\eqref{envelope} defines an isotropic plane. Thus 
$c(t)$ is the intersection of $S(t)$ with the plane, hence a parabolic isotropic circle.  
\end{proof}

This lemma and its proof remain true, if we allow the characteristics $c(t)$ to have endpoints (in the definition of an isotropic channel surface); then 
$c(t)$ is going to be an arc of an isotropic circle instead of a full one.

\begin{proof}[Proof of Theorem~\ref{l-both-curvatures-constant}] 
By Lemma~\ref{constant_isphere},
locally there are two families of congruent parabolic isotropic spheres of radii $1/\kappa_1$ and $1/\kappa_2$ respectively \bluevar{that are tangent to} the surface along isotropic principal curvature lines. By the last assertion of Lemma~\ref{ip-curvature}, one of the characteristics $c(t)$ of one family 
is an arc of a parabolic isotropic circle. Performing an appropriate isotropic congruence, one can make the parabolic isotropic sphere $S(t)$ \bluevar{that is tangent to} the surface along $c(t)$ symmetric with respect to the plane of $c(t)$. The isotropic spheres of the other family are congruent and touch $S(t)$ at the points of~$c(t)$. Hence they are obtained by parabolic rotations of one isotropic sphere, and thus the surface is locally parabolic rotational. By Proposition~\ref{thm-parabolic-rotational}, the theorem follows. (The envelope of the other family can also be computed directly.)
\end{proof}

\subsection{Isotropic channel CRPC surfaces}

The classification of isotropic channel CRPC surfaces is similar to the Euclidean ones, 
but in addition to rotational surfaces, we get parabolic rotational ones. 

An \emph{analytic isotropic channel surface} is an analytic surface that is the envelope of a one-parameter analytic family of parabolic isotropic spheres.


\begin{theorem} \label{th-isotropic-weingarten}
An analytic isotropic channel Weingarten surface is an isotropic rotational or isotropic pipe surface.
In particular, an analytic isotropic channel surface with a constant nonzero ratio of isotropic principal curvatures is a subset of an isotropic rotational or parabolic rotational one. 
\end{theorem}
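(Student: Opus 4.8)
The plan is to follow the reduction scheme of the proof of Theorem~\ref{th-Euclidean-Weingarten}, but to exploit the fact that parabolic isotropic spheres are \emph{quadratic}: this will let me replace the technical Lemma~\ref{l-osculating-Dupin-cyclide} by a short explicit computation. Write the enveloping family as $2z=A(t)(x^2+y^2)+B(t)x+C(t)y+D(t)$, so that by Lemma~\ref{ip-curvature} the isotropic principal curvature along each characteristic $c(t)$ equals $\kappa_1=A(t)$, automatically constant along $c(t)$. Unless the Weingarten relation forces $\kappa_1=\mathrm{const}$ — in which case $A'\equiv0$ and $C$ is an isotropic pipe surface by definition — the analytic curve $f(H,K)=0$ meets the line $K=2AH-A^2$ (to which $(H,K)$ is confined once $\kappa_1=A$ is fixed) in finitely many points for each $t$; hence $\kappa_2$ is also constant along each connected $c(t)$. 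Thus for channel surfaces ``Weingarten'' is equivalent to ``both isotropic principal curvatures constant along every characteristic''. (If moreover $\kappa_1\equiv\kappa_2$ then every point is umbilic, forcing $\nabla^2 f=A\,I$ with $A$ constant, so $C$ is a parabolic isotropic sphere, which is isotropic rotational.)

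Next I would compute $\kappa_2$ along $c(t)$ directly from the envelope. Writing $C$ as the graph $f(x,y)=s_{t(x,y)}(x,y)$, where $2s_t=A(x^2+y^2)+Bx+Cy+D$ and $t(x,y)$ is defined by $g:=\partial_t s_t=0$ (the second equation of~\eqref{envelope}), the chain rule together with $g=0$ gives $\nabla^2 f=A\,I-(\partial_t g)^{-1}(\nabla g)(\nabla g)^{\top}$ along $c(t)$. Its eigenvalues are $\kappa_1=A$, in the direction tangent to $c(t)$ (orthogonal to $\nabla g$), and $\kappa_2=A-|\nabla g|^2/\partial_t g$ in the direction $\nabla g$. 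The crucial observation is that on the characteristic the locus $g=0$ is the circle $A'(x^2+y^2)+B'x+C'y+D'=0$, so $\nabla g=A'\bigl((x,y)-\omega\bigr)$ with $\omega$ its center; therefore $|\nabla g|^2=A'^2\rho^2$ is \emph{already} constant along $c(t)$. Consequently $\kappa_2$ is constant along $c(t)$ if and only if $\partial_t g$ is.

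Now $\partial_t g=\tfrac12\bigl(A''(x^2+y^2)+B''x+C''y+D''\bigr)$; eliminating $x^2+y^2$ via the circle equation turns it into an affine function of $(x,y)$, which is constant on a circle of positive radius exactly when its linear part vanishes, i.e.\ $A'B''=A''B'$ and $A'C''=A''C'$. Integrating gives $B=\lambda A+\beta$ and $C=\mu A+\gamma$ for global constants $\lambda,\mu,\beta,\gamma$ (on each interval where $A'\neq0$, glued by analyticity). A horizontal translation removes $\lambda,\mu$ and a vertical shear $z\mapsto z-\tfrac12(\beta x+\gamma y)$ — both isotropic congruences — removes $\beta,\gamma$, bringing the family to $2z=A(t)(x^2+y^2)+\widetilde D(t)$. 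All these paraboloids share the vertical $z$-axis and each is rotationally symmetric about it, hence so is their envelope; thus $C$ is isotropic rotational. The ``in particular'' statement follows: a CRPC surface is rotational or a pipe by the above, and in the pipe case $\kappa_1=\mathrm{const}$ together with $\kappa_1/\kappa_2=\mathrm{const}$ forces both curvatures constant, so by Theorem~\ref{l-both-curvatures-constant} and Proposition~\ref{thm-parabolic-rotational} the surface is a subset of the paraboloid $z=x^2+ay^2$, which is parabolic rotational.

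The main difficulty I anticipate is not the core computation — which is short precisely because isotropic spheres are quadratic — but the careful bookkeeping around degeneracies. I must justify cleanly the dichotomy between the pipe case ($\kappa_1$ forced constant) and ``$\kappa_2$ constant along every characteristic'', treating the exceptional Weingarten relations in which the line $K=2AH-A^2$ is a component of the Weingarten curve; I must ensure $\partial_t g\neq0$ along $c(t)$ at admissible points so that $\kappa_2$ stays finite and the formula is valid; and I must glue across the isolated zeros of $A'$ by analyticity to upgrade the local conclusion to a global one.
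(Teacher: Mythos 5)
Your proof is correct, and its core is genuinely different from the paper's. The paper shares your outer skeleton --- the dichotomy on the Weingarten relation, the pipe/sphere cases, and the CRPC corollary via Theorem~\ref{l-both-curvatures-constant} --- but its main step is synthetic: Lemma~\ref{l-isotropic-osculating-Dupin-cyclide} shows that constancy of $\kappa_2$ along a characteristic forces the tangent $s'(t)$ of the centers curve to be vertical, by cutting with an isotropic plane and comparing replacing angles against osculating isotropic circles (Lemma~\ref{l-isotropic-circle-center}); a separate envelope computation (Lemma~\ref{cone:vertices}) then converts ``centers curve vertical'' into ``isotropic rotational''. You compress both lemmas into one computation exploiting the fact that parabolic isotropic spheres are graphs of quadratics: the Hessian of the envelope along $c(t)$ is $A\,I-(\partial_t g)^{-1}(\nabla g)(\nabla g)^{\top}$, so $\kappa_2=A-\lvert\nabla g\rvert^2/\partial_t g$ with $\lvert\nabla g\rvert^2=A'^2\rho^2$ automatically constant on the circle, and constancy of $\kappa_2$ becomes the Wronskian identities $A'B''=A''B'$ and $A'C''=A''C'$. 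Note that these identities say exactly that the top-view centers $-\bigl(B'/(2A'),\,C'/(2A')\bigr)$ of the characteristics form a fixed point, i.e.\ they are an analytic restatement of the paper's verticality of the centers curve; the geometric content agrees, only the derivation differs. What each buys: the paper's route runs parallel to its Euclidean argument (Lemma~\ref{l-osculating-Dupin-cyclide} for Theorem~\ref{th-Euclidean-Weingarten}), keeping the osculating-Dupin-cyclide picture uniform across both geometries; yours is shorter and self-contained, uses what is genuinely special about isotropic geometry, and yields an explicit curvature formula for envelopes as a byproduct.

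The degeneracies you flag do resolve, and with no more care than the paper itself exercises (it also restricts to intervals with $r'\ne 0$ and $\kappa_1\ne\kappa_2$, glues by analyticity, and states the dichotomy ``the Weingarten relation is not $\kappa_1=\mathrm{const}$'' without discussing line components). First, for $t$ in an interval where $A'\ne0$, the set of $t$ for which the line $K=2AH-A^2$ is a component of the Weingarten curve is locally finite: these lines are pairwise distinct tangents of the parabola $K=H^2$, and infinitely many of them accumulating would contradict the local structure (Weierstrass preparation) of the one-dimensional analytic zero set of $f$. Hence your Wronskian identities hold for a dense set of $t$ and then, by continuity and analyticity of the family, for all $t$. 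Second, $\partial_t g$ cannot have an isolated zero on a characteristic: its restriction to the circle is affine, and wherever it is nonzero it equals the constant $A'^2\rho^2/(A-\kappa_2)$ (using that $\kappa_2$ is constant along $c(t)$, which your Weingarten argument gives independently of the Hessian formula); so by continuity it is either nowhere zero on $c(t)$, and your argument applies, or identically zero on $c(t)$, in which case the vanishing of its linear part yields the two Wronskian identities at that $t$ directly --- this also absorbs the case of an umbilic characteristic $\kappa_2=A$. So the validity of your formula never fails in a harmful way.
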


The proof is analogous to the Euclidean one, but we consider the centers curve instead of the spine curve.
If the characteristics $c(t)$ are elliptic isotropic circles, then the locus of their centers $s(t)$ 
is called the \emph{centers curve}.

 \begin{lemma}\label{cone:vertices}
     If the centers curve of an \bluevar{analytic} isotropic channel surface is contained in a vertical line, then the surface is isotropic rotational.
 \end{lemma}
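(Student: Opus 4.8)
The plan is to exploit the definition of the centers curve and the structure of elliptic isotropic circles. Recall from Lemma~\ref{ip-curvature} that each characteristic $c(t)$ is an elliptic isotropic circle, i.e., an ellipse whose top view is a Euclidean circle, lying on the parabolic isotropic sphere $S(t)$ of radius $r(t)$. The center $s(t)$ of $c(t)$ is, by hypothesis, constrained to a fixed vertical line $\ell$; after an isotropic congruence I may assume $\ell$ is the $z$-axis. The goal is to deduce invariance of the surface under Euclidean rotations about the $z$-axis.

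First I would describe a single characteristic $c(t)$ concretely. Since $c(t)$ is an elliptic isotropic circle with center on the $z$-axis, its top view is a Euclidean circle centered at the origin, say of some radius $\rho(t)$, and $c(t)$ lies in a non-isotropic plane. The key structural fact is that an elliptic isotropic circle is the intersection of a parabolic isotropic sphere with a non-isotropic plane, and its top view being a circle centered at the origin means the whole circle $c(t)$ is invariant under Euclidean rotations about the $z$-axis (the top-view circle is rotation-invariant, and the $z$-coordinate of a point of $c(t)$ is determined by its top view via the equation of the plane and the sphere; I must check that this $z$-coordinate is itself rotationally symmetric). This is exactly where one must be careful: a generic non-isotropic plane section gives an ellipse whose top view is a circle, but the plane may be tilted, so the height $z$ varies around the circle and the circle need not be rotation-invariant. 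The resolution is that the center $s(t)$ lying on the axis forces the relevant symmetry, and I would pin this down by using the explicit equation~\eqref{eq-center} for $S(t)$ together with the notion of center of an isotropic circle.

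The main step, then, is to show each individual characteristic $c(t)$ is invariant under the rotations $R_\phi$ about the $z$-axis. I expect the cleanest route is: the center of an elliptic isotropic circle is defined (analogously to the curvature center in Lemma~\ref{constant_isphere}) as the center of the parabolic isotropic sphere tangent along a suitable configuration, and the property ``center on the $z$-axis'' should be equivalent to the plane of $c(t)$ being \emph{horizontal} (orthogonal to the $z$-axis in the top view, i.e., of the form $z=\mathrm{const}$) or at least positioned so that $S(t)$ itself is rotationally symmetric about $\ell$. Since $S(t)$ is a paraboloid of revolution whose axis is vertical, $S(t)$ is automatically invariant under all $R_\phi$; the only question is whether the cutting plane is rotation-invariant. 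I would argue that having the center on $\ell$ forces the cutting plane to be horizontal, whence $c(t)=S(t)\cap\{z=\mathrm{const}\}$ is a horizontal circle centered on the axis and hence $R_\phi$-invariant for every $\phi$.

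Once each $c(t)$ is invariant under every $R_\phi$, the conclusion is immediate: the characteristics cover $C$, so $R_\phi(C)=\bigcup_t R_\phi(c(t))=\bigcup_t c(t)=C$, and $C$ is isotropic rotational by definition. I expect the genuine obstacle to be the geometric claim ``center of an elliptic isotropic circle on the axis $\Longrightarrow$ cutting plane horizontal''; establishing this rigorously requires unwinding the (non-congruence-invariant) definition of the center of an isotropic circle and verifying that, for a paraboloid of revolution $S(t)$ with vertical axis $\ell$, the only plane sections whose center lies on $\ell$ are the horizontal ones. This can be checked directly from the equation $2z=A(x^2+y^2)+Bx+Cy+D$ of $S(t)$: a plane $z=px+qy+e$ meets $S(t)$ in an ellipse whose top-view is a circle, and computing the center of that ellipse (or of the isotropic circle) and demanding it lie on the $z$-axis forces $p=q=0$ after absorbing the linear terms $B,C$, i.e., the section is horizontal. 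The remaining verification is then routine, and I would present only the setup and the final symmetry argument, deferring the coordinate check.
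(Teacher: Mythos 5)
There is a genuine gap: your key claim --- that an elliptic isotropic circle on a paraboloid of revolution whose center lies on the vertical axis must lie in a \emph{horizontal} plane --- is false, and so is the conclusion you draw from it, namely that the surface itself is invariant under the rotations $R_\phi$. Concretely, take the family of parabolic isotropic spheres $2z=(x^2+y^2)/t+2x+t$, $t\in[1,2]$. The envelope condition gives $x^2+y^2=t^2$ and, substituting back, $z=x+t$; so each characteristic $c(t)$ is the ellipse $\{(x,y,x+t):x^2+y^2=t^2\}$, whose center $(0,0,t)$ lies on the $z$-axis although its plane $z=x+t$ is tilted. The resulting isotropic channel surface is \emph{not} invariant under rotations about the $z$-axis --- the point $(t,0,2t)$ lies on it but $(-t,0,2t)$ does not --- yet it is isotropic rotational, because the shear $z\mapsto z-x$ (an isotropic congruence) takes it to the cone $z=\sqrt{x^2+y^2}$. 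Your coordinate check in the last paragraph fails exactly here: demanding that the center of $S(t)\cap\{z=px+qy+e\}$ lie on the axis forces $2p=B$ and $2q=C$, not $p=q=0$; ``absorbing the linear terms $B,C$'' is a shear that depends on $B(t),C(t)$, and you are not entitled to apply a different congruence for each $t$.

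What the hypothesis really buys --- and what the paper's proof extracts --- is information \emph{across} the family, not about a single characteristic. Writing $S(t)\colon 2z=A(t)(x^2+y^2)+B(t)x+C(t)y+D(t)$, the top view of $c(t)$ is the circle $A'(x^2+y^2)+B'x+C'y+D'=0$ (second equation of~\eqref{envelope}), whose center is at the origin for every $t$; hence $B'(t)=C'(t)=0$, so $B$ and $C$ are constants independent of $t$. Consequently each $c(t)$ lies in a plane parallel to the single fixed plane $Bx+Cy-2z=0$, and the one shear $z\mapsto z-(Bx+Cy)/2$ simultaneously turns all characteristics into horizontal circles centered on the axis, making the image surface rotation-invariant; the original surface is then isotropic rotational because that class is, by the paper's definition, closed under isotropic congruences. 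So your strategy is repairable, but the repair is precisely the missing step (constancy of $B,C$ in $t$, followed by one global shear), and without it the argument breaks on the example above.
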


 \begin{proof}
     Bring the vertical line to the $z$-axis by an appropriate isotropic congruence.
     Use the notation from the proof of Lemma \ref{ip-curvature}. If $A'(t)\ne 0$, then the second equation of~\eqref{envelope} defines a circle, which represents the top view of the characteristic $c(t)$. Thus the top view of all the characteristics $c(t)$ are circles with the center at the origin. 
     Therefore $B^\prime(t) = C^\prime(t) =0$ for all $t$, hence $B(t)$ and $C(t)$ are constants.  Then the second equation of  \eqref{envelope} takes form $x^2+y^2 = -D'(t)/ A'(t)$. Substituting it into the first equation of~\eqref{envelope}, we obtain that all the isotropic circles $c(t)$ lie in the planes parallel to one plane  $B(0)x+C(0)y-2z = 0$. By continuity, this remains true for the roots of the equation $A'(t)=0$. Therefore the surface is isotropic rotational. 
 \end{proof}

To ensure that the centers curve $s(t)$ is a vertical line, we need the following two technical lemmas. In the first one, for a line segment joining two symmetric parabolic isotropic circles, we express the distance from the midpoint of the segment to the symmetry axis in terms of the replacing angles between the segment and the isotropic circles. The \emph{(oriented) replacing angle} between two non-isotropic lines lying in one isotropic plane is the difference in their slopes. 

\begin{lemma}\label{l-isotropic-circle-center} (See Fig.~\ref{fig:2} to the middle.)
Assume that two parabolic isotropic circles $D_1$ and $D_2$ of isotropic curvature $A$ lie in an isotropic plane and are symmetric with respect to an isotropic line $L$. Let the distance between their axes be $2B\ne 0$. Let two points $p_1$ and $p_2$ lie on $D_1$ and $D_2$ respectively. Let the segment $p_1p_2$ have isotropic length $d$ and form replacing angles $\alpha_1$ and $\alpha_2$ with $D_1$ and $D_2$ respectively. Then the isotropic distance from the midpoint of $p_1p_2$ to the line $L$ equals $\lvert\alpha_1+\alpha_2\rvert d/\lvert 8AB\rvert $.
\end{lemma}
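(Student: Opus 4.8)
The plan is to reduce the statement to an elementary slope computation inside the isotropic plane carrying $D_1$ and $D_2$. First I would apply an isotropic congruence turning this plane into a fixed vertical plane with coordinates $(u,z)$, where $u$ is the top-view abscissa (so that isotropic lengths along the plane are measured by $u$), and sending the symmetry axis $L$ to the vertical line $u=0$. Since $D_1$ and $D_2$ are parabolic isotropic circles of isotropic curvature $A$, symmetric in $L$, with axes at isotropic distance $2B$, in these coordinates they become parabolas with axes $u=\pm B$ and a common vertical shift; after a vertical translation they read $D_1\colon z=\tfrac{A}{2}(u-B)^2$ and $D_2\colon z=\tfrac{A}{2}(u+B)^2$ (a nonzero shear would break the symmetry about the vertical line $L$, hence is absent). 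Writing $p_1=(p,\cdot)\in D_1$ and $p_2=(q,\cdot)\in D_2$, I record the two quantities to be related: the isotropic length $d=|q-p|$ and the isotropic distance $|p+q|/2$ from the midpoint of $p_1p_2$ to $L$.

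Next I would write down the three relevant slopes in the $(u,z)$-plane. The tangents to $D_1$ at $p_1$ and to $D_2$ at $p_2$ have slopes $A(p-B)$ and $A(q+B)$, while the chord $p_1p_2$ has slope $m=\tfrac{A}{2}(p+q)+\dfrac{AB(p+q)}{q-p}$, using $(q+B)^2-(p-B)^2=(q-p+2B)(p+q)$. The oriented replacing angles are then $\alpha_1=m-A(p-B)$ and $\alpha_2=m-A(q+B)$, both taken as \emph{(chord slope) minus (tangent slope)}, so that the two angles are measured with a consistent orientation along the segment.

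The crux is the cancellation in the sum. Since $-A(p-B)-A(q+B)=-A(p+q)$, one gets $\alpha_1+\alpha_2=2m-A(p+q)$, and the term $A(p+q)$ coming from $2m$ cancels against $-A(p+q)$, so that everything not proportional to $B$ drops out and leaves the clean identity $\alpha_1+\alpha_2=\dfrac{2AB(p+q)}{q-p}$. The point is that $\alpha_1+\alpha_2$ is proportional to $p+q$ --- exactly the signed offset of the midpoint from $L$ --- divided by $q-p$; multiplying by $d=|q-p|$ cancels the denominator, so the resulting distance depends neither on where the points sit along the parabolas nor on $d$ separately. Solving for $|p+q|/2$ then expresses the midpoint distance as a fixed multiple of $|\alpha_1+\alpha_2|\,d/|AB|$, which is the asserted formula.

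I expect the only real obstacle to be bookkeeping rather than structure: choosing the orientation of the replacing angles so that their \emph{sum} (not their difference) encodes the midpoint offset --- with the opposite choice one instead obtains the unrelated quantity $A(q-p+2B)$ --- and pinning down the numerical constant, which hinges on the precise normalization of ``isotropic curvature $A$'' (i.e.\ whether the parabola in the plane is $z=\tfrac{A}{2}u^2$ or $z=Au^2$). I would fix that normalization against the defining equation of the parabolic isotropic sphere and then confirm the constant $\lvert 8AB\rvert$ on a single explicit symmetric configuration, the remaining algebra being routine.
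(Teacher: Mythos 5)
Your proposal is correct and takes essentially the same route as the paper's proof, which likewise normalizes so that the circles lie in the $xz$-plane with $L$ the $z$-axis, writes them as $z=A(x\pm B)^2$, and computes directly $\lvert\alpha_1+\alpha_2\rvert=4\lvert AB\rvert\cdot\lvert x_1+x_2\rvert/\lvert x_1-x_2\rvert$, from which the claim follows since $d=\lvert x_1-x_2\rvert$ and $\lvert x_1+x_2\rvert/2$ is the desired distance. The normalization issue you flag is resolved exactly as you suspected: the paper takes the curvature-$A$ circle to be $z=Ax^2$ rather than $z=\tfrac{A}{2}x^2$, which turns your identity $\alpha_1+\alpha_2=2AB(p+q)/(q-p)$ into $4AB(x_1+x_2)/(x_2-x_1)$ and your constant $4\lvert AB\rvert$ into the stated $8\lvert AB\rvert$; note that in the lemma's only application (Lemma~\ref{l-isotropic-osculating-Dupin-cyclide}) only the estimate $O(t^2)$ is used, so this factor of two is immaterial there.
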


\begin{proof}
Without loss of generality, $D_1$ and $D_2$ lie in the $xz$-plane and have the equations $z=A(x\pm B)^2$. If the $x$-coordinates of $p_1$ and $p_2$ are $x_1$ and $x_2$, then 
we compute directly $\lvert\alpha_1+\alpha_2\rvert=4\lvert AB\rvert \cdot\lvert x_1+x_2\rvert/\lvert x_1-x_2\rvert$. Since 
$\lvert x_1-x_2\rvert=d$ and $\lvert x_1+x_2\rvert/2$ is 
the desired isotropic distance, the lemma follows.
\end{proof}

\begin{lemma}\label{l-isotropic-osculating-Dupin-cyclide}
    Under the notation of Lemma~\ref{ip-curvature}, assume that the second principal curvature is constant along $c(t)$ and different from the first one. If $r'(t)\ne 0$, then $s'(t)$ is vertical (or zero).
\end{lemma}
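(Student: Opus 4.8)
The plan is to mirror the Euclidean argument of Lemma~\ref{l-osculating-Dupin-cyclide}, reducing to a planar problem in a vertical plane and replacing the Tait--Kneser/symmetric-circle estimate by the explicit formula of Lemma~\ref{l-isotropic-circle-center}. I work near a fixed representative value, say $t=0$, writing $\kappa_1=1/r$ for the principal curvature along $c(t)$ and $\kappa_2$ for the transverse one (constant along $c(t)$), and I set $A:=\kappa_2(0)$, which I may assume nonzero as part of the general-position reduction. Since isotropic congruences preserve verticality, the conclusion ``$s'(0)$ is vertical'' is congruence-invariant, so I first apply a congruence making the plane of the elliptic isotropic circle $c(0)$ horizontal; then $S(0)$ is a paraboloid of revolution, $c(0)$ is a horizontal circle of radius $R\ne 0$ centered on its vertical axis $L$, and $s(0)\in L$.

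Let $v$ be the horizontal velocity of $s$ at $0$; the goal is to show $v=0$, so assume $v\ne 0$ and let $P$ be the vertical plane over the top-view line through $s(0)$ in the direction $v$, with unit direction $\hat v$. Intersecting with $P$, the characteristic meets $P$ in two points $c_1(t),c_2(t)$ tracing curves $c_1,c_2=C\cap P$; the isotropic sphere gives the parabolic isotropic circle $S_1(t)=S(t)\cap P$, tangent to $c_1,c_2$ at $c_1(t),c_2(t)$; and I let $D_i(0)$ be the osculating isotropic circle of $c_i$ at $c_i(0)$. Because the tangent of $c_i$ at $c_i(0)$ is the transverse principal direction, Meusnier's theorem (Theorem~\ref{l-Meusnier}) shows that $D_1(0),D_2(0)$ are parabolic isotropic circles of isotropic curvature $A$; since their tangent points, tangent directions, and curvature are all determined by the $L$-symmetric data ($S(0)$ and the two symmetric points $c_1(0),c_2(0)$), they are symmetric about $L$. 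A direct computation of their vertices gives the distance between their axes as $2B=2R\,|\kappa_2-\kappa_1|/|\kappa_2|\ne 0$, where the hypothesis $\kappa_1\ne\kappa_2$ is exactly what guarantees $B\ne 0$.

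Let $M(t)$ be the midpoint of $c_1(t)c_2(t)$. Its top view is the foot of the perpendicular from the top view of $s(t)$ onto the line $P$, so the isotropic distance from $M(t)$ to $L$ equals $|(\text{top view of }s(t))\cdot\hat v|=|v|\,t+O(t^2)$; hence it suffices to show this distance is $O(t^2)$. Replacing $c_i(t)$ by the nearby point $p_i(t)\in D_i(0)$ moves the midpoint by $O(t^3)$ (second-order contact), so I may apply Lemma~\ref{l-isotropic-circle-center} to $p_1(t),p_2(t)$: the distance from their midpoint to $L$ is $|\alpha_1+\alpha_2|\,d/|8AB|$, where $d=|p_1p_2|=O(1)$ and $\alpha_i$ is the replacing angle between the segment and $D_i(0)$. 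The crux is that $\alpha_1+\alpha_2=O(t^2)$: using the tangency $c_i$--$S_1(t)$ at $c_i(t)$ and the second-order contact $c_i$--$D_i(0)$, each $\alpha_i$ equals, up to $O(t^2)$, the slope of the chord $c_1(t)c_2(t)$ minus the slope of $S_1(t)$ at $c_i(t)$; and for the parabola $S_1(t)$ the chord slope is the average of the two endpoint slopes, so the first-order terms cancel. Combining, the distance from $M(t)$ to $L$ is $O(t^2)$, forcing $|v|=0$, i.e.\ $s'(0)$ is vertical (or zero); as $t=0$ was an arbitrary admissible value, this holds for all $t$.

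The main obstacle is the error bookkeeping that isolates and kills the first-order term: one must verify that passing from the curves $c_i$ to their osculating circles $D_i(0)$, from $c_i(t)$ to $p_i(t)$, and from the tangents of $D_i(0)$ to those of $S_1(t)$ each cost only $O(t^2)$ (resp.\ $O(t^3)$), so that the exact cancellation coming from the parabola chord-slope identity survives. The secondary points requiring care are the symmetry normalization at $t=0$ and the non-vanishing of $B$, which is precisely where the assumption $\kappa_1\ne\kappa_2$ enters.
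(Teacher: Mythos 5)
Your argument is, in substance, the paper's own proof: the same reduction to a section by a vertical plane through $s(0)$, the same replacement of the two section curves by their osculating isotropic circles $D_1,D_2$, the same error bookkeeping ($O(t^2)$ from the osculating contact, $O(t^3)$ for the chord, exact cancellation of first-order terms from the chord-slope identity of the parabola $S_1(t)$), and the same final appeal to Lemma~\ref{l-isotropic-circle-center}. Your two deviations are harmless and even pleasant: fixing the plane $P$ by the assumed direction of the horizontal velocity $v$ (arguing by contradiction) avoids the paper's $t_0$-dependent plane, so all implied constants refer to one fixed plane; and your explicit formula $2B=2R|\kappa_2-\kappa_1|/|\kappa_2|$ makes visible exactly where $\kappa_1\ne\kappa_2$ enters, where the paper only asserts $D_1\ne D_2$.

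There is, however, one genuine gap: the claim that you ``may assume $A:=\kappa_2(0)$ nonzero as part of the general-position reduction.'' No such reduction exists for this lemma. Unlike the Euclidean Lemma~\ref{l-osculating-Dupin-cyclide}, whose statement carries the hypotheses $\kappa_2(t),\kappa_2'(t)\ne 0$, the present lemma assumes only that $\kappa_2$ is constant along $c(t)$, that $\kappa_2\ne\kappa_1$, and that $r'(t)\ne 0$; and in the proof of Theorem~\ref{th-isotropic-weingarten} it is invoked on an interval restricted only by $r'(t)\ne0$ and $\kappa_1\ne\kappa_2$. The case $\kappa_2\equiv 0$ is not vacuous: the cone $z=c\sqrt{x^2+y^2}$ is an isotropic channel Weingarten surface with $\kappa_2=0$ and $\kappa_1=c/r$ nonconstant, and the theorem's proof feeds exactly this situation into the lemma. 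When $A=0$ your argument breaks in two places: Meusnier's theorem (Theorem~\ref{l-Meusnier}) and Lemma~\ref{l-isotropic-circle-center} no longer apply as stated, and your formula for $B$ divides by $\kappa_2$; indeed $D_1(0),D_2(0)$ degenerate into the two tangent lines. This is precisely why the paper's proof ends with ``Lemma~\ref{l-isotropic-circle-center} \emph{and its analog for lines} $D_1$ and $D_2$.'' The repair is easy and keeps your structure: for the two non-parallel lines $z=\pm mx$ (with $m=\kappa_1(0)R\ne 0$, since $\kappa_1=1/r(0)\ne0=\kappa_2$), symmetric about $L$, the same computation as in Lemma~\ref{l-isotropic-circle-center} gives that the distance from the midpoint of $p_1p_2$ to $L$ equals $|\alpha_1+\alpha_2|\,d/|4m|$, after which your $O(t^2)$ estimate for $\alpha_1+\alpha_2$ finishes the proof verbatim.
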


\begin{proof} Fix a particular value of $t$, say, $t=0$, and assume that $t$ is sufficiently close to this value. 
It suffices to prove that the isotropic distance from $s(t)$ to $s(0)$ is $O(t^2)$. 

We do it by reduction to a planar problem. \bluevar{See Fig.~\ref{fig:2} to the middle.} 
Since $r'(0)\ne 0$, by Lemma~\ref{ip-curvature} it follows that $c(0)$ is an elliptic isotropic circle. Performing an isotropic congruence of the form $z\mapsto z+px+qy$, we can take $c(0)$ to a horizontal circle. \bluevar{Take an isotropic plane $P$ passing through $s(0)$ and parallel to $s'(0)$.} 
The section of $c(t)$ by the plane $P$ consists of two points $c_1(t)$ and $c_2(t)$. The section of $C$ coincides with the two curves $c_1(t)$ and $c_2(t)$ because the characteristics cover the surface. The section of $S(t)$ is a parabolic isotropic circle \bluevar{that is tangent to} the two curves at the points $c_1(t)$ and $c_2(t)$. Then the chord $c_1(t)c_2(t)$ forms equal replacing angles (of opposite signs) with the tangents at $c_1(t)$ and $c_2(t)$. 
\bluevar{It suffices to prove that the isotropic distance from}
the midpoint 
of the chord \bluevar{to $s(0)$ is $O(t^2)$}.

Let us estimate how those replacing angles and the midpoint change if we replace the two curves with their osculating isotropic circles  $D_1$ and $D_2$ (possibly degenerating into lines) at the points $c_1(0)$ and $c_2(0)$. Since the second principal curvature is constant along $c(0)$ and the plane of $c(0)$ is horizontal, it follows that $D_1$ and $D_2$ are symmetric with respect to the vertical line $L$ through $s(0)$. 
Since the first principal curvature is different, it follows that $D_1\ne D_2$. Let $p_1(t)$ and $p_2(t)$ be the points of $D_1$ and $D_2$ lying on the vertical lines through the points $c_1(t)$ and $c_2(t)$ respectively. The replacing angle between the tangents to the respective curves at the points $p_1(t)$ and $c_1(t)$ is $O(t^2)$ because $D_1$ is osculating. The same is true for the tangents at $p_2(t)$ and $c_2(t)$. The replacing angle between $p_1(t)p_2(t)$ and $c_1(t)c_2(t)$ is 
$O(t^3)$.
Thus the replacing angles $\alpha_1(t)$ and $\alpha_2(t)$ which $p_1(t)p_2(t)$ forms with $D_1$ and $D_2$ satisfy $\alpha_1(t)+\alpha_2(t)=O(t^2)$.  Now the result follows from Lemma~\ref{l-isotropic-circle-center} and its analog for lines $D_1$ and $D_2$.  
\end{proof}

\begin{proof}[Proof of Theorem~\ref{th-isotropic-weingarten}]
Consider an analytic isotropic channel Weingarten surface $C$ and the isotropic parabolic sphere $S(t)$ of radius $r(t)$ \bluevar{that is tangent to} $C$ along the characteristic $c(t)$. 
Then according to Lemma~\ref{ip-curvature}, the 
isotropic principal curvature $\kappa_1$ along $c(t)$ is $1/r(t)$.  If the Weingarten relation is not $\kappa_1=\mathrm{const}$, then 
the other isotropic principal curvature $\kappa_2$ is constant along $c(t)$. 

If $r(t)=\mathrm{const}$ or $\kappa_1\equiv\kappa_2$, then we get an isotropic pipe surface or an isotropic sphere.
Otherwise, restrict the range of $t$ to an interval where $r'(t) \ne 0$ and $\kappa_1\ne\kappa_2$. By Lemma~\ref{l-isotropic-osculating-Dupin-cyclide}, the centers curve $s(t)$ has vertical tangent vector $s'(t)$ for all $t$.
Then $s(t)$ is contained in a vertical line, and by Lemma~\ref{cone:vertices} the surface $C$ is isotropic rotational.

Finally, suppose that $C$ has a constant ratio of the isotropic principal curvatures. Then it is Weingarten. As we have proved, it is an isotropic rotational or pipe surface. In the latter case, $\kappa_1 =\mathrm{const}$ by Lemma~\ref{ip-curvature}, hence $\kappa_2 = \mathrm{const}$, and $C$ is a subset of a parabolic rotational surface by Theorem~\ref{l-both-curvatures-constant}. 
\end{proof}

\section{Ruled surfaces} \label{sec-ruled}


Let us now turn to ruled surfaces. In Euclidean geometry we will not encounter a new surface, but in isotropic geometry there is a non-trivial CRPC ruled surface. 

Our arguments are based on line geometry. For the concepts used in the following, we refer to \cite{pottwall:2001}. The methods for ruled surfaces and channel surfaces are actually related via Lie's line-sphere correspondence. We again restrict ourselves to analytic surfaces (with nonvanishing Gaussian curvature); then the rulings form an analytic family because the direction of a ruling is asymptotic. 

An \emph{analytic ruled surface} is an analytic surface covered by an analytic family of line segments. The lines containing the segments are the \emph{rulings}. 

\subsection{Euclidean ruled CRPC surfaces.} We start with the Euclidean
case and show the following result.

\begin{prop} \label{p-Euclidean-ruled}
The only ruled surfaces with a constant nonzero ratio of principal curvatures 
are the ruled minimal
surfaces, i.e., helicoids. 
\end{prop}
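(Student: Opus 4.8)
The plan is to prove that a ruled surface with $K\neq0$ and constant nonzero principal-curvature ratio must in fact be \emph{minimal}, and then to invoke the classical theorem of Catalan that the only ruled minimal surfaces are planes and helicoids; since $K\neq0$ rules out the plane, only the helicoid survives. The first observation is that on a ruled surface every ruling is an asymptotic line (the second fundamental form vanishes in the ruling direction, as $\mathbf{x}_{vv}=0$), so $K\le0$; the hypothesis $K\neq0$ forces $K<0$, the rulings form one of the two families of asymptotic curves, and (as recalled in the introduction for $K<0$) the CRPC condition becomes the requirement that the asymptotic curves meet at a constant angle, equivalently $H^2/K\equiv\mathrm{const}$.

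Next I would use the line-geometric parametrization $\mathbf{x}(u,v)=\mathbf{c}(u)+v\,\mathbf{e}(u)$, where $\mathbf{e}$ is the unit ruling direction and $\mathbf{c}$ is the striction curve ($\mathbf{c}'\cdot\mathbf{e}'=0$), together with the Sannia frame $\mathbf{e}_1=\mathbf{e}$, $\mathbf{e}_2=\mathbf{e}'/|\mathbf{e}'|$, $\mathbf{e}_3=\mathbf{e}_1\times\mathbf{e}_2$ and its structure equations; here $v$ is arc length along the ruling and $\delta:=\det(\mathbf{c}',\mathbf{e},\mathbf{e}')/|\mathbf{e}'|^2$ is the distribution parameter. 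A direct computation of the two fundamental forms (with the coefficient $N=0$) yields the classical expressions
\[
 K=-\frac{\delta^2}{(v^2+\delta^2)^2},\qquad H=\frac{P(v)}{(v^2+\delta^2)^{3/2}},
\]
where, for each fixed $u$, $P(v)$ is a polynomial of degree at most $2$ in $v$ whose coefficients are expressions in $\delta$, $|\mathbf{e}'|$, $\mathbf{c}'$, and their derivatives. Nonvanishing of $K$ guarantees $\delta\neq0$, so these formulas hold along the whole ruling.

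Forming the quotient gives $H^2/K=-P(v)^2/\big(\delta^2(v^2+\delta^2)\big)$, a rational function of $v$ along each ruling. The CRPC condition forces this to be constant in $v$, hence $P(v)^2=\mu\,(v^2+\delta^2)$ for a constant $\mu$ (a priori depending on $u$). Comparing degrees in $v$ forces $\deg P\le1$; writing $P=\alpha+\beta v$ and matching the coefficient of $v$ gives $\alpha\beta=0$, while matching the remaining coefficients gives $\beta^2=\mu$ and $\alpha^2=\mu\delta^2$. Since $\delta\neq0$, a nonzero $\mu$ would make both $\alpha$ and $\beta$ nonzero, contradicting $\alpha\beta=0$; hence $\mu=0$, so $\alpha=\beta=0$ and $P\equiv0$. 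Therefore $H\equiv0$, the surface is ruled minimal, and Catalan's theorem concludes that it is a helicoid.

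The main obstacle is the bookkeeping in the middle step: one must verify that the numerator $P(v)$ really is a polynomial of degree at most $2$ and isolate its leading ($v^2$) coefficient, which is proportional to the conical curvature $w$ defined by $\mathbf{e}_3'=-w\,\mathbf{e}_2$. This is exactly the term that the degree comparison kills first (forcing $w\equiv0$, i.e. the rulings parallel to a fixed plane), after which the linear-coefficient argument collapses $H$ to zero. Some care is also needed where the striction construction or the Sannia frame degenerates, but such points are excluded on an analytic surface with $K<0$ and no umbilics, so the identity $H\equiv0$ extends to the whole surface by analyticity.
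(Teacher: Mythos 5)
Your proof is correct, and it takes a genuinely different route from the paper. The paper argues synthetically via line geometry: along a fixed ruling $R$, the other asymptotic tangents $A(p)$ form the Lie quadric $L(R)$; if they met $R$ at a constant angle $\gamma\ne\pi/2$, their ideal points would fill a conic $c_\omega$ (the ideal conic of a rotational cone with axis $R$) not containing the ideal point $R_\omega$, yet all these points must lie on the single conic or line pair $L(R)\cap\omega$ --- impossible; hence $\gamma=\pi/2$, the surface is minimal, and Catalan's theorem finishes. You instead use the striction parametrization and the explicit Lamarle-type formulas $K=-\delta^2/(v^2+\delta^2)^2$, $H=P(v)/(v^2+\delta^2)^{3/2}$ with $\deg P\le 2$ (which check out, including your identification of the $v^2$-coefficient of $P$ with the conical curvature $w$), and then kill $H$ by a degree count in $v$: the relation $P(v)^2=\mu(v^2+\delta^2)$ with $\delta\ne0$ forces $\mu=0$, hence $H\equiv0$. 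Your computation is elementary and self-contained (modulo Catalan), and it has the pleasant feature that the leading coefficient dies first --- forcing $w\equiv0$, i.e.\ a conoidal surface --- before the linear coefficient collapses $H$ to zero; this mirrors the structure of the paper's isotropic classification (Lemmas~\ref{l-Catalan}--\ref{l-CRPC-conoids}). What the paper's synthetic argument buys is brevity and direct transferability to isotropic geometry, where the same Lie-quadric reasoning shows the quadric is a hyperbolic paraboloid and a nontrivial ruled CRPC surface survives; the paper also notes that its lemmas yield a line-geometric proof of Catalan's theorem itself. One small point of framing: $K\ne0$ is not a hypothesis of the proposition but a consequence of the constant nonzero ratio (a vanishing principal curvature would make the ratio zero or undefined), so you should derive it rather than assume it; with that one-line fix your argument is complete.
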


\begin{proof} 
A ruled CRPC surface must be skew (without torsal rulings) and its asymptotic curves should intersect under a constant angle $\gamma$ \cite[Section~2.1]{helicalcrpc}.
One family of asymptotic curves is the rulings. Let us fix a ruling $R$ and consider the other asymptotic tangents $A(p)$ (different from $R$) at all points $p \in R$. They form a quadric $L(R)$ 
(the so-called \emph{Lie quadric} of $R$; see e.g. \cite[Corollary~5.1.10]{pottwall:2001}). If the angle between $R$ and $A(p)$ is constant, it can only be a right angle (so that $L(R)$ is a right hyperbolic paraboloid). Indeed, if the angle $\gamma$ is not a right one,
then the ideal points of the lines $A(p)$ form a conic $c_\omega$ in the ideal plane $\omega$ (ideal conic of a rotational cone with axis
$R$) which does not contain the ideal point $R_\omega$ of $R$. 
However, $R_\omega$ and $c_\omega$ should lie in the same curve $L(R)\cap \omega$ 
(conic or pair of lines)
, which is not possible. 
So, $\gamma=\pi/2$ and our surface is a skew ruled minimal surface,
i.e. a helicoid by the Catalan theorem. 
\end{proof}
\begin{remark}
Here we used the famous Catalan theorem stating that the only ruled minimal surfaces are helicoids and planes. Remarkably, in Lemmas~\ref{l-Catalan}--\ref{l-CRPC-conoids} below we actually obtain a line-geometric proof of this classical result. 
Indeed, we have just shown 
that the Lie quadric of each ruling must be a right hyperbolic paraboloid. Then Lemma~\ref{l-Catalan} and its proof remain true in Euclidean geometry. Then without loss of generality, all the rulings are parallel to the plane $z=0$. Since the asymptotic directions and the rulings are orthogonal, their top views are also orthogonal, and our Euclidean minimal surface is an \emph{isotropic} minimal surface as well. The Catalan theorem now reduces to Lemmas~\ref{l-conoid}--\ref{l-CRPC-conoids}, where the case of a hyperbolic paraboloid is easily excluded.
\end{remark}

The proof of Proposition~\ref{p-Euclidean-ruled} already indicates that there is hope to get a ruled
CRPC surface to a constant $a \ne -1$ in isotropic geometry. This is what we will now pursue.

\subsection{Isotropic ruled CRPC surfaces.}

\begin{theorem}\label{thm-ruled} (See Fig.~\ref{fig:ruled})
An admissible analytic ruled surface has a constant ratio $a< 0$ of isotropic principal curvatures if and only if it is isotropic similar to a subset of 
either the hyperbolic paraboloid
$
    z = x^2+ay^2,
$    
or the helicoid 
\begin{align}
\label{eq-helicoid}
r(u,v)&=\begin{bmatrix}
           u\cos v \\
           u\sin v \\
           v
         \end{bmatrix},
&\text{if } a= -1,\\
\intertext{or the surface}
\label{eq-spiral}
r_a(u,v)&=\begin{bmatrix}
           u\cos v \\
           u\sin v \\
           \exp\left(\frac{a+1}{\sqrt{\lvert a \rvert}}v\right)
         \end{bmatrix},
&\text{if } a \ne -1.
\end{align}
\end{theorem}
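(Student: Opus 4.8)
The plan is to reduce everything to a one-variable ODE, as in the rotational and parabolic-rotational cases, after first pinning down the global shape of the rulings. Throughout, since $a<0$ we have $K=a\kappa_2^2<0$, so both families of asymptotic directions are real, the rulings form one of them, and the isotropic characteristic directions coincide with the asymptotic directions, meeting at the constant isotropic angle $\gamma$ with $\cot^2(\gamma/2)=|a|$. Representing the surface locally as a graph $z=f(x,y)$, the top view of each ruling is a straight line along which $f$ is affine, i.e. a null direction of the Hessian $\nabla^2 f$. I would split into two cases according to whether these top-view ruling directions are all parallel.

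In the \emph{parallel case}, normalize the common direction to the $x$-axis, so that $f(x,y)=\lambda(y)x+\mu(y)$. Then $f_{xx}=0$, $f_{xy}=\lambda'$, $f_{yy}=\lambda''x+\mu''$, whence $H=(\lambda''x+\mu'')/2$ and $K=-\lambda'^2$. The characterization $H^2/K=(a+1)^2/(4a)$ forces $(\lambda''x+\mu'')^2/\lambda'^2$ to be a positive constant; independence of $x$ gives $\lambda''=0$, and then independence of $y$ gives $\mu''=\mathrm{const}$. Hence $f$ is a quadratic function with $\det\nabla^2 f=-\lambda'^2\ne 0$, and by Example~\ref{ex-paraboloid} the surface is isotropic similar to a subset of the hyperbolic paraboloid $z=x^2+ay^2$. (Note that the hyperbolic paraboloid, with either ruling family chosen, always lands in this case, since the top views of one family are parallel.)

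The heart of the argument is the \emph{non-parallel case}, where I expect the main obstacle to lie. Here I would mimic the line-geometric argument of Proposition~\ref{p-Euclidean-ruled}: for a fixed ruling $R$, the second asymptotic tangents at the points of $R$ sweep out the Lie quadric $L(R)$, and the constant isotropic (top-view) angle $\gamma$ between them and $R$ should force $L(R)$ to be the isotropic analog of a right hyperbolic paraboloid. A Catalan-type argument would then show that all rulings are parallel to one fixed plane, which an isotropic similarity brings to the horizontal plane $z=0$, so that all rulings are horizontal. A further conoid analysis, again exploiting that the second asymptotic family meets the rulings under the fixed angle $\gamma$ in the top view, should show that the horizontal rulings all pass through one vertical line; normalizing this axis to the $z$-axis, the surface is a right conoid. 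Making this passage rigorous — controlling the Lie quadric, reducing the director plane to horizontal, and ruling out non-concurrent (e.g. enveloping) horizontal rulings — is the technical crux, which I would isolate into separate lemmas using the analyticity of the ruling family.

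Once the surface is a right conoid, write it as $z=h(\phi)$ with $\phi=\arg(x+iy)$ the polar angle, equivalently $r(u,v)=(u\cos v,u\sin v,h(v))$. A direct computation gives $H=h''(\phi)/(2(x^2+y^2))$ and $K=-h'(\phi)^2/(x^2+y^2)^2$, so that $H^2/K=-h''^2/(4h'^2)$ and the factor $x^2+y^2$ cancels. The CRPC condition therefore reduces to $(h''/h')^2=(a+1)^2/|a|=\mathrm{const}$, i.e. $h''/h'=\pm(a+1)/\sqrt{|a|}$. Integrating, $h$ is linear when $a=-1$ (the helicoid~\eqref{eq-helicoid}) and $h(v)\propto\exp\!\big(\tfrac{a+1}{\sqrt{|a|}}v\big)$ when $a\ne -1$ (the surface~\eqref{eq-spiral}); the integration constants, the sign, and a shift of $v$ are absorbed by an isotropic similarity (vertical scaling and translation together with the reflection $y\mapsto -y$). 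Conversely, each listed surface is an admissible analytic ruled surface whose constant ratio of isotropic principal curvatures equals $a$ — the paraboloid by Example~\ref{ex-paraboloid}, and the conoids by the curvature formulas just derived — which completes the equivalence.
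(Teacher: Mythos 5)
Your proposal is correct and complete in its easy parts, but the part you yourself call ``the heart of the argument'' --- the non-parallel case --- is never actually proved: it is a plan, stated with ``should force'', ``would then show'', ``should show'', and explicitly deferred to unproven lemmas. That plan is, in substance, exactly the paper's proof: the Lie-quadric step is Lemma~\ref{l-Catalan} (constant top-view angle $\Rightarrow$ the asymptotic tangents along a ruling have parallel top views $\Rightarrow$ the Lie quadric is a hyperbolic paraboloid, and then --- the step your sketch omits entirely --- its second ruling family cuts the ideal plane in a line having second-order contact with the ideal curve of ruling directions, forcing that curve to be a line or a point). The passage from conoidal to conoid is Lemma~\ref{l-conoid}, and its difficulty is not ``controlling the Lie quadric'' but excluding the envelope alternative: by Lemma~\ref{l-envelope} the top views of the rulings either pass through a common point, are parallel, or envelope a curve, and killing the third case is the most delicate argument in the paper (one must extend the surface up to the envelope, where it ceases to be admissible, trace a second-family asymptotic curve into a non-admissible point, and use Lemma~\ref{l-tangents} on limits of tangents and osculating planes to reach a contradiction with the constant-angle condition). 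None of this is routine, and ``isolate into separate lemmas using analyticity'' does not discharge it. As written, your argument proves the theorem only for surfaces assumed a priori to have all ruling top views parallel or concurrent.

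What you do prove is correct, and in two places differs pleasantly from the paper. Your parallel case, via $f(x,y)=\lambda(y)x+\mu(y)$, $H=(\lambda''x+\mu'')/2$, $K=-\lambda'^2$, is a cleaner and more elementary treatment than the paper's case~(ii) in Lemma~\ref{l-conoid}, which argues through asymptotic curves lying in isotropic planes. Likewise, your right-conoid computation $H=h''/(2(x^2+y^2))$, $K=-h'^2/(x^2+y^2)^2$, reducing the CRPC condition to $(h''/h')^2=(a+1)^2/\lvert a\rvert$, is a correct alternative to the paper's Lemma~\ref{l-CRPC-conoids}, which instead invokes Kruppa's equation $u(v)^2=bh'(v)$ for the second asymptotic family and identifies its top views as logarithmic spirals; your sign/constant normalization (vertical scaling, translation, $y\mapsto-y$) matches what the paper itself does. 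But these are the computational ends of the proof; the reduction that makes them applicable is the theorem's real content, and it is missing.
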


\begin{proof}
This follows directly from Lemmas~\ref{l-Catalan}--\ref{l-CRPC-conoids} below (which themselves rely on standard Lemmas~\ref{l-envelope}--\ref{l-tangents} from Appendix~\ref{sec-appendix}).
\end{proof}

\begin{figure}[htbp]
\hfill
\begin{overpic}[width=0.28\textwidth]{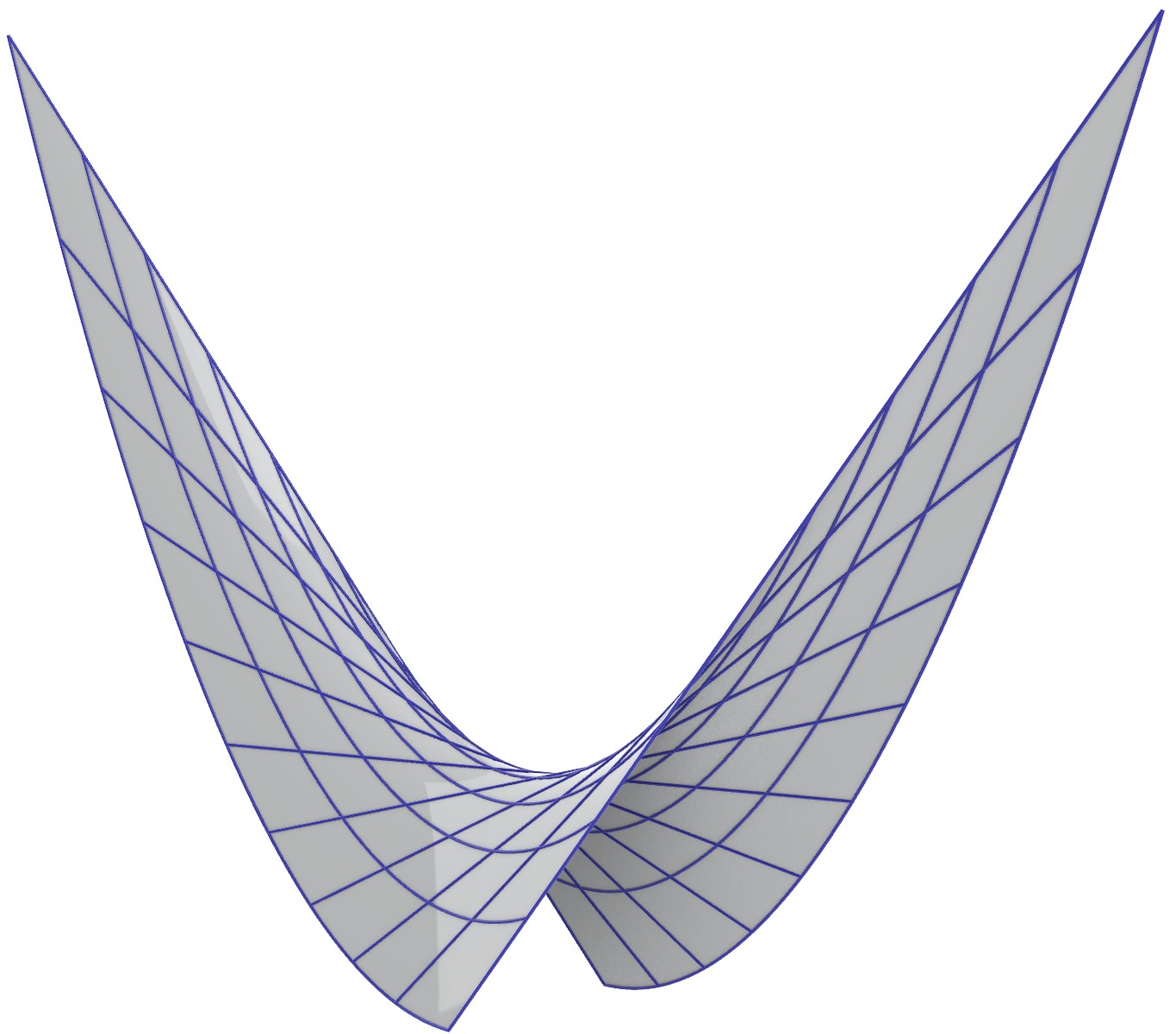}
\end{overpic}
\hfill
\begin{overpic}[width=0.15\textwidth]{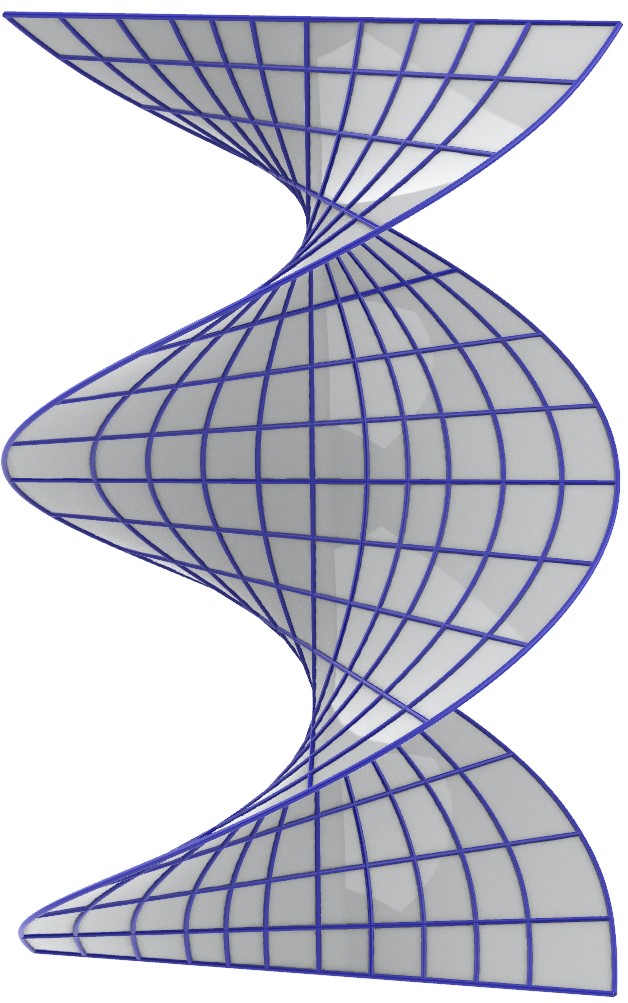}
\end{overpic}
\hfill
\begin{overpic}[width=0.22\textwidth]{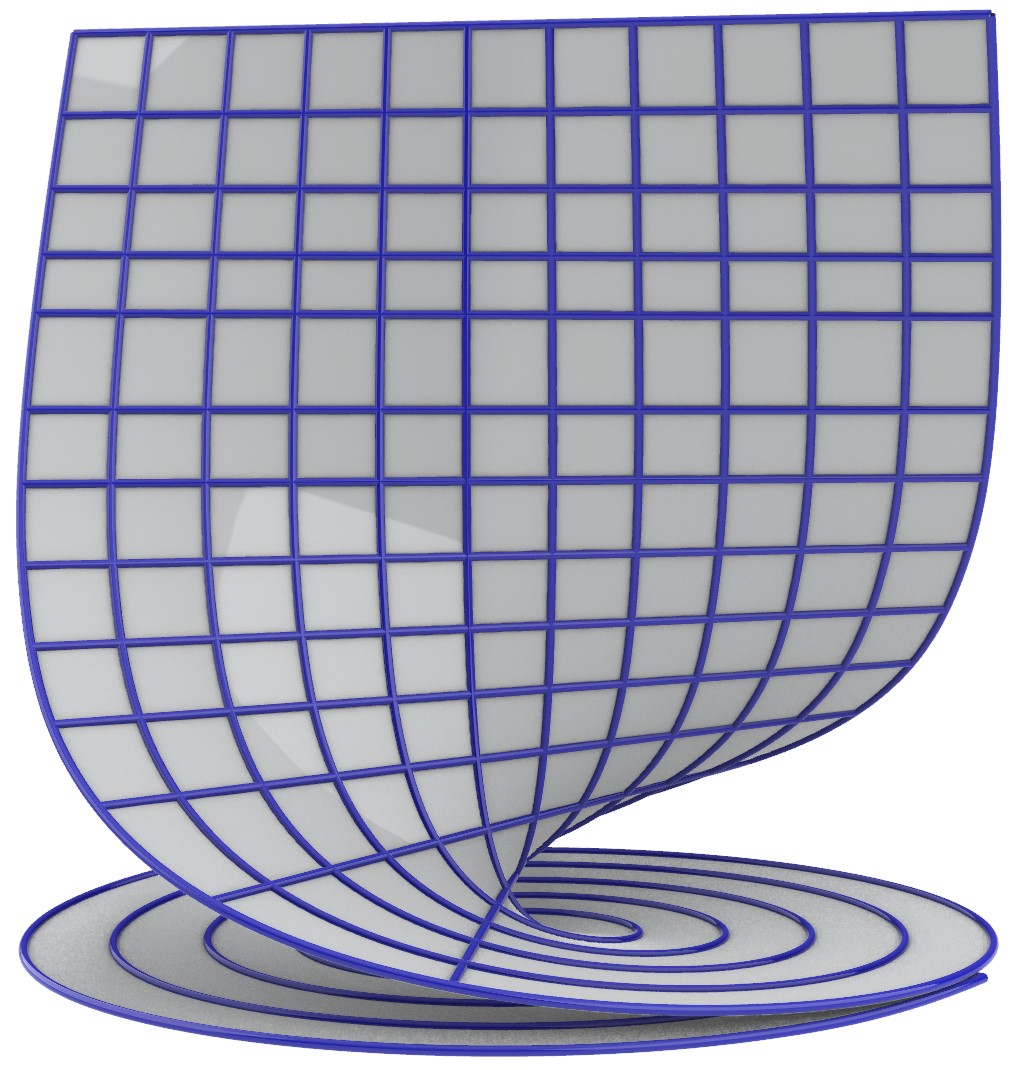}
\end{overpic}
  \hfill{}
    \caption{Ruled isotropic CRPC surfaces (from the left to the right): 
    a hyperbolic paraboloid, 
    helicoid~\eqref{eq-helicoid}, and spiral ruled surface~\eqref{eq-spiral}.}
    \label{fig:ruled}
\end{figure}

\begin{lemma} \label{l-Catalan} An admissible analytic ruled surface with a constant nonzero ratio of isotropic 
principal curvatures is a \emph{conoidal} (or \emph{Catalan}) surface, i.e. all the rulings are parallel to one plane.
\end{lemma}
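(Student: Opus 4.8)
The plan is to combine the constant-angle characterization of isotropic CRPC surfaces with the line-geometric notion of the Lie quadric, exactly as in the Euclidean Proposition~\ref{p-Euclidean-ruled}, and then to read off the conoidal property from the shape of that quadric.

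First I would record the relevant general-position facts. Since $a<0$, the isotropic Gaussian curvature $K=\kappa_1\kappa_2=a\kappa_1^2$ is strictly negative (both isotropic principal curvatures are nonzero because their ratio is defined), so the surface is hyperbolic, has no torsal rulings, and its rulings are skew. A straight line on the surface has vanishing isotropic normal curvature, so the rulings form one of the two families of asymptotic curves, and by the characterization recalled in Section~\ref{sec-preliminaries} (for $K<0$ the isotropic characteristic curves are the asymptotic ones) the two asymptotic families meet under the constant isotropic angle $\gamma$ with $\cot^2(\gamma/2)=\lvert a\rvert$. Now fix a ruling $R$ and, at each point $p\in R$, let $A(p)$ be the second asymptotic tangent. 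Then $A(p)$ makes the constant isotropic angle $\gamma\in(0,\pi)$ with $R$. Since isotropic angles are Euclidean angles in the top view and the top view $\bar R$ is a fixed line, the top view $\bar A(p)$ makes the fixed angle $\gamma$ with $\bar R$; by analyticity the sign cannot jump, so $\bar A(p)$ is one fixed direction $d$ along all of $R$. Consequently every line $A(p)$, $p\in R$, is parallel to the fixed vertical plane $\Pi$ spanned by $d$ and the $z$-axis.

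Next I would invoke line geometry. The lines $A(p)$, $p\in R$, form a regulus lying on the Lie quadric $L(R)$ of $R$ (see \cite[Corollary~5.1.10]{pottwall:2001}), with $R$ belonging to the complementary regulus. A regulus all of whose lines are parallel to one plane can only belong to a hyperbolic paraboloid: on a hyperboloid of one sheet the directions of a regulus sweep the ideal conic of the asymptotic cone and hence are not contained in a single plane, while degenerate quadrics are excluded because the rulings are skew. Hence $L(R)$ is a hyperbolic paraboloid whose two director planes are $\Pi$ and a second plane $\Pi'$, and its complementary regulus, the one containing $R$, consists of lines all parallel to $\Pi'$.

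Finally I would transfer this back to the surface. Writing the surface as $r(u,v)=b(v)+u\,e(v)$, the Lie quadric has contact of order two with the surface along $R$, so the ruling direction $e(v)$ agrees, up to second order at the parameter value $v_0$ corresponding to $R$, with the direction of the $\Pi'$-regulus of $L(R)$; since every direction of that regulus lies in $\Pi'$, the vectors $e(v_0),e'(v_0),e''(v_0)$ all lie in $\Pi'$, whence $\det[e(v_0),e'(v_0),e''(v_0)]=0$. As $v_0$ is arbitrary, $\det[e,e',e'']\equiv 0$. For an analytic direction curve this forces $e''\in\mathrm{span}(e,e')$, so $e\times e'$ has constant direction (unless $e$ is itself constant, the trivial cylindrical case), and therefore $e(v)$ lies in a fixed plane through the origin; that is, all rulings are parallel to one plane and the surface is conoidal. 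I expect the main obstacle to be precisely this last transfer step: one must pin down the order of contact between the Lie quadric and the ruled surface and argue that the pointwise rank condition $\det[e,e',e'']\equiv 0$ integrates into a single director plane, whereas the constant-angle reduction and the exclusion of torsal rulings through $K<0$ are comparatively routine.
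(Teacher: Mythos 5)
Your proposal is correct and takes essentially the same route as the paper: the constant isotropic angle forces the asymptotic tangents along a ruling to have parallel top views, so the Lie quadric is a hyperbolic paraboloid, and the second-order contact of its regulus with the family of rulings (the fact the paper imports from \cite[Proposition~5.1.11]{pottwall:2001}) forces the curve of ruling directions to osculate a fixed line at every point, hence to be a line, i.e.\ the surface is conoidal. Your pointwise condition $\det[e,e',e'']\equiv 0$ and its integration via the constancy of the direction of $e\times e'$ is just a coordinate rendering of the paper's projective argument that the ideal curve $s_\omega$, having an osculating straight line at each point, is itself a straight line or a point.
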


\begin{proof}
Let us show that the Lie quadric of each ruling $R$ (see \cite[Corollary~5.1.10]{pottwall:2001})  is a hyperbolic paraboloid. Since the surface is admissible, the top view $R'$ of $R$ is not a point.
Since isotropic angles are seen as Euclidean angles in the top view, the top views $A(p)'$ of the asymptotic tangents $A(p)$ at points $p$ of  $R$ must form the same angle $\gamma$ with  $R'$. Hence the lines $A'(p)$ are parallel to each other, implying that the quadric formed by the lines $A(p)$  is 
a hyperbolic paraboloid. 

Then the second family of rulings (distinct from $A(p)$) of the quadric intersects the ideal plane $\omega$ by a line $A_\omega$. Since the Lie quadric has a second-order contact with our surface \cite[Theorem~5.1.9]{pottwall:2001}, it follows that  
$A_\omega$ has a second-order contact with the ideal curve $s_\omega$ 
formed by the ideal points of the rulings of our surface \cite[Proposition~5.1.11]{pottwall:2001}. 
As a curve that has an osculating straight line at each point, the curve $s_\omega$ is itself a straight line 
and 
therefore our surface must be a conoidal surface \bluevar{($s_\omega$ cannot degenerate to a point as the isotropic Gaussian curvature $K\ne 0$).} 
\end{proof}

\begin{lemma} \label{l-conoid}
An admissible analytic conoidal surface with a constant nonzero ratio of isotropic 
principal curvatures is a \emph{conoid}, i.e.,~all the rulings are parallel to a fixed plane and intersect a fixed line. The fixed line is either vertical or belongs to another family of rulings. 
In the latter case, the surface is a hyperbolic paraboloid with a vertical axis.
\end{lemma}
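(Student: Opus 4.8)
The plan is to invoke the conoidal structure supplied by Lemma~\ref{l-Catalan} and split according to whether the fixed director plane $\Pi$ (to which all rulings are parallel) is isotropic (vertical) or not. Throughout I use that a ruled CRPC surface has $K<0$, hence $a<0$, and that the CRPC condition reads $H^2/K=(a+1)^2/(4a)=:c\le 0$. In the first case, $\Pi$ non-vertical, I apply an isotropic congruence $z\mapsto z-px-qy$ to make $\Pi$ horizontal, so every ruling is horizontal. Then the surface is the graph of a function whose level lines are exactly the straight top views of the rulings, and I parametrize it as $\mathbf r(u,v)=(c_1(v)+u\cos v,\,c_2(v)+u\sin v,\,h(v))$, reparametrizing so that the ruling-direction angle equals $v$ (possible since $K\neq0$ forbids parallel rulings). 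The first technical step is to compute the isotropic curvatures in this non-graph parametrization, being careful that the isotropic second fundamental form is the top-view contraction of the Hessian rather than merely the vertical component of $\mathbf r_{uu}$. Writing $\lambda:=c_1'\cos v+c_2'\sin v$, $\nu:=c_2'\cos v-c_1'\sin v$ and $D:=u+\nu$, a direct calculation gives
\[ K=-\frac{h'^2}{D^{4}},\qquad 2H=\frac{h''D+h'(\lambda-\nu')}{D^{3}}. \]

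Substituting into $H^2=cK$ and clearing denominators, I read the identity as a polynomial in $u$ (legitimate because each ruling carries a whole interval of $u$-values) and match the coefficients of $u^2,u^1,u^0$. This yields $h''^2=-4c\,h'^2$ together with $\lambda=\nu'$ (for $c\neq0$ the latter comes from the $u^1$-coefficient, for $c=0$ from the $u^0$-coefficient). The concluding step of this case is to read $\lambda=\nu'$ geometrically: letting $\rho(v):=c_2\cos v-c_1\sin v$ be the signed distance from the origin to the top view $\ell_v$ of the ruling, a short computation gives $\rho''+\rho=\nu'-\lambda$, so $\lambda=\nu'$ is equivalent to $\rho''+\rho=0$, whence $\rho(v)=\alpha\cos v+\beta\sin v$. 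This forces all lines $\ell_v$ to pass through one common point; thus the rulings meet a fixed \emph{vertical} line and the surface is a conoid with vertical axis. (The residual ODE $h''^2=-4c\,h'^2$ only fixes the height profile and will be exploited in Lemma~\ref{l-CRPC-conoids}.)

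In the second case, $\Pi$ vertical, a rotation about the $z$-axis makes all ruling top views parallel to the $x$-axis, so the surface is a graph $z=f(x,y)=x\,m(y)+n(y)$, linear in $x$ along each ruling. Here the computation is immediate: $K=-m'^2$ and $2H=x\,m''+n''$, so $H^2=cK$ becomes $(x\,m''+n'')^2=-4c\,m'^2$, whose right-hand side is independent of $x$. Hence $m''\equiv0$, then $n''$ is constant, so $f$ is a quadratic polynomial and the surface is a hyperbolic paraboloid (recall $K<0$). By Example~\ref{ex-paraboloid} it is isotropic similar to $z=x^2+ay^2$; since a hyperbolic paraboloid is doubly ruled and every ruling of one family meets each ruling of the other, the rulings meet a fixed line belonging to the second family, which is precisely the second alternative of the lemma.

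The step I expect to be the main obstacle is the curvature computation in the first case: setting up the isotropic shape operator correctly for the conoid parametrization (in particular isolating the precise combination $\lambda-\nu'$ in $H$), and then recognizing the resulting algebraic condition $\lambda=\nu'$ as the differential equation $\rho''+\rho=0$ that encodes concurrency of the ruling top views. The second case is routine by comparison, since the graph being linear in $x$ collapses the CRPC equation to a one-variable statement at once.
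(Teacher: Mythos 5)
Your proposal is correct, and it takes a genuinely different route from the paper's. The paper argues qualitatively: by Lemma~\ref{l-envelope} the top views $R'_t$ of the rulings are (on a subinterval) concurrent, parallel, or tangent to an envelope; the parallel case is handled by showing the second asymptotic family would lie in isotropic planes, so at non-inflection points the tangent planes would be isotropic, whence both asymptotic families are straight and the surface is a hyperbolic paraboloid with vertical axis; and the bulk of the paper's proof goes into excluding the envelope case, by extending the surface up to the envelope, steering an asymptotic curve into a non-admissible point, and contradicting the constant top-view angle $\gamma$ via Lemma~\ref{l-tangents}. You instead split on whether the director plane is isotropic and let the CRPC equation decide everything. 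I verified your computation: with $W=D=u+\nu$ one gets $L=0$, $M=-h'/D$, $N=\bigl(h''D-h'(\lambda+\nu')\bigr)/D$, $E=1$, $F=\lambda$, hence indeed $K=-h'^2/D^4$ and $2H=\bigl(h''D+h'(\lambda-\nu')\bigr)/D^3$; matching coefficients of $u^2,u^1,u^0$ in $\bigl(h''D+h'(\lambda-\nu')\bigr)^2=-4c\,h'^2D^2$ gives $h''^2=-4c\,h'^2$, then $h''h'(\lambda-\nu')=0$, then $h'^2(\lambda-\nu')^2=0$, so the $u^0$-coefficient already yields $\lambda=\nu'$ uniformly in $c$ (your case split between $c\ne 0$ and $c=0$ is harmless but unnecessary). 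The support-function step is also right: $\rho''+\rho=\nu'-\lambda$, so $\rho=\alpha\cos v+\beta\sin v$ and all lines $\ell_v$ pass through $(-\beta,\alpha)$; and the reparametrization by the direction angle is legitimate because in a general parameter $t$ the isotropic curvature of your parametrization is $K=-h'^2\theta'^2/W^4$, so $K\ne0$ forces both $h'\ne0$ and $\theta'\ne0$. Your vertical-director-plane case reproduces the paper's parallel case via the immediate computation $K=-m'^2$, $2H=xm''+n''$. What your route buys: it bypasses the appendix Lemmas~\ref{l-envelope}--\ref{l-tangents} and the delicate extension-to-the-envelope argument altogether --- the envelope alternative is excluded implicitly, since $\lambda=\nu'$ forces concurrency --- and the residual ODE $h''^2=-4c\,h'^2$ integrates at once to the helicoid/spiral height profiles, so it absorbs most of Lemma~\ref{l-CRPC-conoids}. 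What the paper's route buys: it is coordinate-free and stays within the line-geometric framework of Section~\ref{sec-ruled}, which is what lets the authors extract a line-geometric proof of the Catalan theorem in the remark after Proposition~\ref{p-Euclidean-ruled}. Two small points you should make explicit: the shear $z\mapsto z-px-qy$ and the rotation about the $z$-axis are isotropic congruences, hence preserve admissibility and the CRPC condition; and the concurrency point $(-\beta,\alpha)$ is a priori only locally constant, so invoke analyticity (as the paper does at the start of its own proof) to conclude for the whole surface.
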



\begin{proof} By the analyticity, it suffices to prove the lemma for an arbitrarily small part of our surface. Thus in what follows we freely restrict and extend our surface.

Let $R_t$ be the analytic family of the rulings of the surface. Since \bluevar{$K\ne 0$},
it follows that 
there are no torsal rulings; in particular, the surface is not a plane. 

Let $R'_t$ be the top view of $R_t$. By Lemma~\ref{l-envelope}, one of the following cases~(i)--(iii) holds, after we restrict $t$ to a smaller interval. 

Case (i):  
all $R'_t$ have a common point. Then all $R_t$ intersect one 
vertical line and the lemma is proved.

Case (ii): 
all $R'_t$ are parallel. Then due to the fixed angle between the asymptotic directions in the top view, the second family of
asymptotic curves also appears as parallel lines in the top view. 
Hence those curves lie in the isotropic planes. However, at non-inflection points of the asymptotic curves the osculating planes are the tangent planes of the surface \cite[ Page 28]{strubecker1969differentialgeometrie}. Thus these tangent planes needed to be isotropic, which is not possible for an admissible surface. Hence, there are no non-inflection points, both families of asymptotic curves are straight lines, and our surface is a hyperbolic paraboloid with a vertical axis.

Case (iii): 
all $R'_t$ touch one curve $e$ (envelope). 
Let us show that this case is actually impossible.

For this purpose, we are going to extend our surface to reach the envelope. 
Let $h(t)$ be the Euclidean distance from the ruling $R_t$ to the \bluevar{fixed} plane parallel to \bluevar{all the rulings}. We have $h(t)\ne\mathrm{const}$ because our surface is not a plane. By continuity, there is an interval~$I$ where $h'(t)$ has a constant sign. Then the union $\bigcup_{t\in I}R_t$ is an analytic surface containing a part of the initial surface. The resulting surface is not admissible: By Lemma~\ref{l-envelope}  the envelope forms a part of the boundary of the top view of the surface, hence the tangent planes are isotropic at the points with the top views lying on the envelope.

Switch to the new surface $\bigcup_{t\in I}R_t$. By analyticity, it still has a constant ratio of isotropic principal curvatures (at the admissible points). 
The Gaussian curvature still vanishes nowhere because there are no torsal rulings. Thus the whole surface, including non-admissible points, is covered by two analytic families of asymptotic curves (recall that the asymptotic curves are the same in Euclidean and isotropic geometry, hence they acquire no singularities at the non-admissible points). One of the families consists of the rulings, and at admissible points, the other one crosses them under constant angle $\gamma$ in the top view. 

Now we \bluevar{prove that there is }
 an asymptotic curve $\alpha$ containing a non-admissible point $O$ but not entirely consisting of non-admissible points. See Fig.~\ref{fig:2} to the right.
The top view $\alpha'$ needs to have a common point with the envelope~$e$. 
Take $a,b\in I$ close enough so that the angle between $R'_a$ and $R'_t$ is an increasing function in $t$ on $[a,b]$ not exceeding $\pi-\gamma$. Let $A'$ and $B'$ be the tangency points of $R'_a$ and $R'_b$ with the envelope, and $C\in R_a$ be the point with the top view $C':=R'_a\cap R'_b$. Then the second asymptotic curve $\alpha$ through $C$ is the required one. Indeed, the angle between its top view $\alpha'$ and $R'_a$ equals $\gamma$, hence $\alpha'$ enters the curvelinear triangle $A'B'C'$ formed by the arc $AB$ of the envelope and two straight line segments $B'C'$ and $C'A'$. Since
the angle between $\alpha'$ and $R'_t$ is constant and
the angle between $R'_a$ and $R'_t$ is increasing, the curve $\alpha'$ cannot reach the sides $B'C'$ and $C'A'$ as long  as it remains smooth. Since the asymptotic curves extend till the surface boundary, it follows that $\alpha'$ has a common point with the envelope
and $\alpha$ has a non-admissible point $O$, as required.
 
Since $\alpha$ does not entirely consist of non-admissible points, it follows that at the other close enough points $P\ne O$ of $\alpha$,
the tangents cross the rulings under constant angle $\gamma$ in the top view. By the continuity, the limit $L'$ of the top views of the tangents crosses the top view of the ruling through $O$ under angle $\gamma$. 

Now let us prove that $L'$ must be the top view of the ruling through $O$, and thus get a contradiction. 

If the tangent of $\alpha$ at $O$ is not vertical, then $L'$ coincides with the top view of the tangent, hence with the top view of the tangent plane at $O$, hence with the top view of the ruling through $O$. 

If the tangent of $\alpha$ at $O$ is vertical, then by Lemma~\ref{l-tangents} the limit $L'$  coincides with the top view of the limit of the osculating planes at the points of $\alpha$. But the osculating plane of an asymptotic curve at non-inflection points is the tangent plane, and the points close enough to $O$ are non-inflection. Hence we again obtain the top view of the tangent plane at $O$, equal to the top view of the ruling through $O$. 

This contradiction shows that case (iii) is impossible, completing the proof. 
\end{proof}

\begin{lemma} \label{l-CRPC-conoids} An admissible conoid has a constant ratio $a\ne 0$ of isotropic principal curvatures if and only if it is isotropic similar to a subset of one of the surfaces $z=x^2+ay^2$, \eqref{eq-helicoid}, or \eqref{eq-spiral} \bluevar{from Theorem~\ref{thm-ruled}}.
\end{lemma}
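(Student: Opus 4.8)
The plan is to parameterize an admissible conoid and reduce the CRPC condition to an ordinary differential equation. By Lemma~\ref{l-conoid}, all rulings are parallel to a fixed plane and meet a fixed line, which (after excluding the hyperbolic paraboloid case, already covered) we may take to be the vertical $z$-axis. Thus I would write the surface in the form $r(u,v)=(u\cos v,\,u\sin v,\,g(v))$ for some function $g$, where $u$ runs along each ruling and $v$ parameterizes the family of rulings; here the rulings are horizontal lines meeting the $z$-axis, and the height $g(v)$ records how the surface rises as the ruling rotates. Using an isotropic similarity I would normalize the leading behavior of $g$. The task then becomes: determine all $g$ for which the surface has constant ratio $a$ of isotropic principal curvatures.

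Next I would compute the isotropic curvatures directly from the Hessian of the height function. Since the surface is a graph over the top view (an admissible conoid projects injectively, at least locally, onto an annular region of the plane), I would express $z$ as a function $f(x,y)$ and compute $H=(f_{xx}+f_{yy})/2$ and $K=f_{xx}f_{yy}-f_{xy}^2$. The CRPC condition is $H^2/K=(a+1)^2/(4a)$, as recorded in the Preliminaries. In the conoidal coordinates $(u,v)$ this should collapse to a second-order autonomous ODE for $g(v)$, because the conoid structure makes the curvature ratio depend only on $v$ (the rotational-scaling symmetry of the ambient problem forces $u$-independence of the ratio). I expect the ODE to take a form like $g''=\lambda\,g$ or, after the normalization, a constant-coefficient relation whose solutions are exponentials and the linear function.

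Solving the ODE should yield exactly the three families: the linear solution $g(v)=v$ giving the helicoid~\eqref{eq-helicoid} at $a=-1$, the exponential solutions $g(v)=\exp\!\bigl(\tfrac{a+1}{\sqrt{\lvert a\rvert}}v\bigr)$ giving the spiral surface~\eqref{eq-spiral} for $a\ne-1$, and (when the ODE degenerates) the paraboloid case. The exponent $\tfrac{a+1}{\sqrt{\lvert a\rvert}}$ is the telltale: it matches the W-curve exponent appearing for the rotational surfaces in the discussion after Proposition~\ref{thm-rotational}, which is a reassuring consistency check, since the characteristic curves there are W-curves of the same one-parameter subgroup of $G^8$. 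I would verify that the normalization used to fix $g$ corresponds precisely to the stated isotropic similarity, so that the classification is up to isotropic similarity as claimed.

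The main obstacle will be organizing the curvature computation so that it genuinely reduces to an autonomous ODE rather than a messy mixed relation in $u$ and $v$. The conoid is ruled with $K<0$ (it is admissible with a constant nonzero ratio, and $a<0$ in the ambient Theorem~\ref{thm-ruled}), so I would most likely exploit the asymptotic-curve description: one family of asymptotic curves is the rulings, and the CRPC condition says the second family crosses the rulings under a constant isotropic angle $\gamma$ with $\cot^2(\gamma/2)=\lvert a\rvert$ in the top view. Tracking this second family through the conoid parameterization directly constrains $g$, and the constant-angle condition in the top view translates the geometry into the exponential/linear dichotomy. The delicate point is handling the degenerate and boundary behaviors — inflection points of $g$, the matching at $a=-1$, and ensuring no spurious solutions slip in — but these are checked by the analyticity already invoked throughout this section.
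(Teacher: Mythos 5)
Your proposal is correct, and the route you ultimately settle on --- one asymptotic family being the rulings, the other crossing them under the constant angle $\gamma$ in the top view, which then pins down $g$ --- is exactly the paper's proof. The paper makes the step you leave implicit (``tracking this second family through the conoid parameterization'') concrete by quoting Kruppa's equation $u(v)^2=b\,h'(v)$ for the non-ruling asymptotic curves of a conoid $r(u,v)=(u\cos v,u\sin v,h(v))$: the constant-angle condition forces their top views to be concentric circles when $\gamma=\pi/2$ (so $h'=\mathrm{const}$, giving helicoid~\eqref{eq-helicoid}) or logarithmic spirals $u=ce^{\pm v\cot\gamma}$ otherwise (so $h'\propto e^{\pm 2v\cot\gamma}$, giving surface~\eqref{eq-spiral} via $\cot^2(\gamma/2)=\lvert a\rvert$). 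Your treatment of the non-vertical-axis case through Lemma~\ref{l-conoid} and Example~\ref{ex-paraboloid} also agrees with the paper. It is worth recording that the direct computation you were wary of works painlessly and yields a second, more elementary argument: writing the conoid as the graph $z=g(\theta)$ of a function of the polar angle $\theta$, one finds
\begin{equation*}
H=\frac{g''(\theta)}{2u^2},\qquad K=-\frac{g'(\theta)^2}{u^4},\qquad\text{hence}\qquad \frac{H^2}{K}=-\frac{g''(\theta)^2}{4\,g'(\theta)^2},
\end{equation*}
where $u^2=x^2+y^2$; the $u$-dependence cancels automatically because $\theta$ is harmonic, which kills the mixed terms you feared. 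The CRPC condition then reads $g''/g'=\pm(a+1)/\sqrt{\lvert a\rvert}$ --- a constant-coefficient equation for $g'$, not $g''=\lambda g$ as you guessed --- whose solutions are precisely the linear $g$ (for $a=-1$) and exponential-plus-constant $g$ (for $a\ne-1$), matching \eqref{eq-helicoid} and \eqref{eq-spiral} after the isotropic similarity normalization. This bypasses the citation of Kruppa's ODE entirely, at the cost of a short Hessian computation.
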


\begin{proof} Let all the rulings of the conoid be parallel to a fixed plane $\alpha$ and intersect a fixed line $l$. We have two possibilities indicated in Lemma~\ref{l-conoid}.

If $l\not\parallel Oz$ then by Lemma~\ref{l-conoid} the surface is a hyperbolic paraboloid with a vertical axis. By Example~\ref{ex-paraboloid} it is isotropic similar to a subset of the paraboloid $z = x^2+ay^2$.

If $l\parallel Oz$ then performing an isotropic similarity, we can take $l$ to the $z$-axis and $\alpha$ to the plane $z=0$. The resulting conoid can be parameterized as 
$$ r(u,v) = (u \cos v, u \sin v, h(v))$$
for some smooth function $h(v)$. The asymptotic curves (distinct from the rulings) are characterized by the differential equation 
\cite[p.~137]{kruppa:1957}
$$ u(v)^2 = b h'(v)$$
for some constant $b$. The top views of the asymptotic curves
must intersect the lines through the origin under the constant angle $\gamma$, where $\cot^2(\gamma/2)=\lvert a\rvert$. We now have to distinguish whether this angle is  right one or not.
%

If $\gamma$ is a right angle, then the top views of asymptotic curves must be concentric circles, leading to $u(v)=\mathrm{const}$ and $h(v)=v$ up to a translation and a scaling along the $z$-axis. We get helicoid~\eqref{eq-helicoid}. 

If $\gamma$ is not a right angle, then the top views 
must be logarithmic spirals
$ u(v)= c e^{\pm v\cot{\gamma}}$ 
for some constant $c$. This yields 
$ h(v)= e^{\pm 2v\cot{\gamma}} 
$
up to a translation and a scaling along the $z$-axis.
Changing the signs of $v$ and $y$, if necessary, we arrive at~\eqref{eq-spiral}.
\end{proof}

\subsection{Geometry of the surfaces and their characteristic curves}

Ruled CRPC surface~\eqref{eq-spiral} is a  \emph{spiral surface} (see \cite{wunderlich:DG2}), generated by a one-parameter group of (Euclidean and isotropic) similarities, composed of rotations about the $z$-axis and central similarities with center at the origin. The paths of that motion are cylindro-conical spirals which appear in the
top view as logarithmic spirals with polar equation \bluevar{$r(v)= c\cdot e^{2v \cot{\gamma}}$ for some constant $c$}. However, the
asymptotic curves (different from rulings) are not such paths. They are
expressed as 
$$ c(v)= (c e^{v \cot{\gamma}} \cos v,  c e^{v\cot{\gamma}} \sin v, e^{2v \cot{\gamma}}), $$
and are also obtained by intersecting the ruled surface with isotropic spheres (of variable isotropic radius $c^2\bluevar{/2}$),
\begin{equation}\label{eq-iso-sphere}
\bluevar{z}= \frac{1}{c^2}(x^2+y^2). 
\end{equation}
On these, the curves $c(v)$ are isotropic loxodromes. Their tangents are contained in a linear line complex with the $z$-axis as the axis. 
This is related to another non-Euclidean interpretation of isotropic CRPC ruled surface~\eqref{eq-spiral} and its \bluevar{asymptotic} 
curves $c(v)$: One can \bluevar{view} 
one of the paraboloids~\eqref{eq-iso-sphere} as absolute quadric of the projective
model of hyperbolic 3-space. There, \eqref{eq-spiral} is a helicoid and the asymptotic curves $c(v)$ are paths of a hyperbolic helical motion (one- parameter \bluevar{subgroup} of the group of hyperbolic congruence transformations). It is also well known and easy to see that the hyperbolic helices are projectively equivalent to Euclidean spherical loxodromes \cite{Strubecker1931}. \mscomm{MS: It is better to make the reference more precise, maybe cite a particular Satz. The word 'loxodrome' appears in Strubecker's work only on page 65, and cannot find anything on the projective equivalence around.}

In summary, we have proved the following result.

\begin{prop}
The asymptotic curves of spiral ruled surfaces~\eqref{eq-spiral}, distinct from the rulings, lie on isotropic spheres. Viewing one of these isotropic spheres as the absolute quadric
of the projective model of hyperbolic geometry, these surfaces are helicoids and the asymptotic curves are helical paths. The latter are projectively equivalent to Euclidean spherical loxodromes.
\end{prop}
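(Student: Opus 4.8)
The plan is to prove the three assertions of the final Proposition in sequence, building directly on the computations already displayed in the text preceding the statement. The first claim---that the asymptotic curves $c(v)$ distinct from the rulings lie on isotropic spheres---is essentially already verified in the discussion above: one substitutes the explicit parametrization $c(v)= (c e^{v \cot{\gamma}} \cos v, c e^{v\cot{\gamma}} \sin v, e^{2v \cot{\gamma}})$ into the equation $2z = (x^2+y^2)/c^2$ of the isotropic sphere~\eqref{eq-iso-sphere} and checks the identity $2 e^{2v\cot\gamma} = (c^2 e^{2v\cot\gamma})/c^2$, which holds. So this step is a direct verification and requires no new idea; I would simply state that $c(v)$ satisfies~\eqref{eq-iso-sphere} identically in $v$.

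For the second assertion, I would take one of the paraboloids~\eqref{eq-iso-sphere} as the absolute quadric of the Cayley--Klein projective model of hyperbolic $3$-space. The key observation is that the one-parameter similarity group generating~\eqref{eq-spiral} (rotation about the $z$-axis through angle $v$ composed with the central homothety of ratio $e^{v\cot\gamma}$ and the vertical scaling) must be realized as a group of projective collineations preserving the chosen paraboloid. One checks that the homothety-plus-vertical-scaling part preserves~\eqref{eq-iso-sphere}, so the group acts by hyperbolic isometries; being a one-parameter subgroup containing a rotation, it is a hyperbolic helical motion. Since the rulings of~\eqref{eq-spiral} are lines meeting the $z$-axis and parallel to the horizontal plane, and since the absolute quadric is invariant, the surface is swept by a line under a hyperbolic helical motion, i.e.~a hyperbolic helicoid; the curves $c(v)$, being the orbits of the generating group through a point, are precisely the helical paths of that motion.

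The third assertion---that these hyperbolic helices are projectively equivalent to Euclidean spherical loxodromes---I would establish by exhibiting a projective transformation carrying the absolute paraboloid to a Euclidean sphere and tracking where the helical paths go. Under such a collineation a hyperbolic helical motion becomes a one-parameter group of projective maps fixing the sphere; its orbits are curves crossing the images of the ``meridian'' great circles at a constant angle, which is the defining property of a loxodrome. The cleanest route is to invoke the cited classical fact \cite{Strubecker1931} that hyperbolic helices correspond projectively to spherical loxodromes, so that the remaining work is only to match our generating group to the one in that reference; alternatively one verifies the constant-angle property directly after transporting the metric.

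The main obstacle I expect is the second step: making precise the claim that the generating similarity group descends to a genuine one-parameter group of hyperbolic \emph{isometries} of the Cayley--Klein model, rather than merely projective maps preserving the quadric. The subtlety is that the vertical scaling by $e^{v a/\sqrt{|a|}}$ is not an isotropic congruence, so one must check carefully that the \emph{particular} combination appearing in~\eqref{eq-spiral} preserves the specific paraboloid~\eqref{eq-iso-sphere} used as the absolute; this is where the exponents in~\eqref{eq-spiral} conspire correctly, and it is the identity $2e^{2v\cot\gamma}=c^2 e^{2v\cot\gamma}/c^2$ from the first step that secures invariance. Once invariance of the absolute is confirmed, the identification with a hyperbolic helical motion and with spherical loxodromes follows from the standard theory of Cayley--Klein geometries.
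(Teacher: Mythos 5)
Your route coincides with the paper's own: the proposition is established there by exactly the three ingredients you use --- direct substitution of the explicit asymptotic curves $c(v)$ into the equation of the carrier paraboloids, invariance of those paraboloids under the one-parameter similarity group generating~\eqref{eq-spiral} (so that once one paraboloid is declared the absolute, the group becomes a hyperbolic helical motion whose axis is the $z$-axis, the chord joining the vertex to the ideal point of the vertical direction, and the asymptotic curves are its orbits), and an appeal to the classical reference \cite{Strubecker1931} for the equivalence of hyperbolic helices with spherical loxodromes. Two concrete issues should be flagged, however. First, your verification identity is false as written: along $c(v)$ one has $2z = 2e^{2v\cot\gamma}$ while $(x^2+y^2)/c^2 = e^{2v\cot\gamma}$, so $c(v)$ does \emph{not} satisfy~\eqref{eq-iso-sphere} but rather $z=(x^2+y^2)/c^2$, an isotropic sphere of radius $c^2/2$. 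This factor-of-two mismatch is actually present in the paper's own display~\eqref{eq-iso-sphere}, but a blind check should have detected it rather than asserting ``which holds.'' The slip is harmless for your second step, since the invariance you need does not rest on that identity: the generating flow multiplies $z$ and $x^2+y^2$ by the same factor $e^{2t\cot\gamma}$, hence preserves every paraboloid $z=\lambda(x^2+y^2)$ simultaneously, and the rest of your argument (the orbit through $(c,0,1)$ reproduces $c(v)$; the rulings meet the invariant axis) goes through.

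Second, your backup argument for the loxodrome claim has a genuine gap: after a collineation carrying the \emph{absolute} paraboloid to a Euclidean sphere, the helical paths are curves in the \emph{interior} of the ball, not on the sphere, so ``crossing the meridian great circles at a constant angle'' --- which describes orbits of the induced loxodromic M\"obius flow \emph{on} the boundary sphere --- does not apply to them. The repair is to use the quadric actually carrying the helix: each asymptotic curve lies on one of the invariant paraboloids $z=\lambda(x^2+y^2)$, a quadric tangent to the absolute at the two ideal endpoints of the axis (an equidistant surface of the axis in the hyperbolic model), and it is \emph{that} quadric which should be sent projectively to a sphere, after which the restricted conformal flow has loxodromes as orbits. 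Since the paper itself proves nothing at this step beyond citing \cite{Strubecker1931} (and even records internally that the reference is imprecise), your primary route of invoking the same citation leaves you at parity with the paper; only your optional sketch needs the above correction.
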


\section{Helical surfaces} \label{sec-helical}

A \emph{helical motion  through the angle $\phi$ about the $z$-axis with pitch $h$} is the composition of the rotation through the angle $\phi$ about the $z$-axis and the translation by $h\phi$ in the $z$-direction. The helical motion is also an isotropic congruence. A surface invariant under the helical motions for fixed $h$ and all $\phi$ is called \emph{helical with pitch $h$}. In particular, for $h=0$ we get a rotational surface.


\begin{theorem}\label{helical} (See Fig.~\ref{fig:helical})
An admissible helical surface with nonzero pitch has a constant ratio $a\ne 0$ of isotropic principal curvatures, if and only if it is isotropic similar to a subset of one of the surfaces
\begin{align}\label{helicalane-1}
    r_{a}(u,v) &=\begin{bmatrix}
           \cos v\left(\cos u\sin^a u\right)^{-\frac{1}{a+1}} \\
           \sin v\left(\cos u\sin^a u\right)^{-\frac{1}{a+1}} \\
           \bluevar{v+u+ \frac{1}{a^2-1}{\tan u}+\frac{a^2}{a^2-1}\cot u}
         \end{bmatrix},
& &\text{if } a \neq \pm 1,\\
\label{helical-1}
r_{c}(u,v)&=\begin{bmatrix}
           u\cos v \\
           u\sin v \\
           c\log u + v
         \end{bmatrix},
& &\text{if } a = -1,
\end{align}
where  $c$ is an arbitrary constant
\bluevar{
and $v$ runs through $\mathbb{R}$.
In~\eqref{helical-1}, $u$ runs through $(0,+\infty)$.
In~\eqref{helicalane-1}, $u$ runs through a subinterval of $(0,\pi/2)$, where $\tan^2 u\ne a$.}
\end{theorem}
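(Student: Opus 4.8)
The plan is to reduce the problem to an ordinary differential equation for the meridian and to solve it. First I would fix the local form of the surface. Since Euclidean rotations about the $z$-axis together with vertical translations are isotropic congruences, a helical surface with pitch $h$ is invariant under $(r,\phi,z)\mapsto(r,\phi+t,z+ht)$ in cylindrical coordinates. I claim that at an admissible point its meridian (the section by a half-plane $\phi=\mathrm{const}$) is a graph $z=\psi(r)$: if instead the meridian had a vertical tangent, then the tangent plane, being spanned by this vertical direction and the helical direction $(0,r,h)$, would be isotropic, contradicting admissibility. Hence locally $z=\psi(r)+h\phi$, and by the vertical scaling $(x,y,z)\mapsto(x,y,z/h)$, which is an isotropic similarity and so preserves the ratio of principal curvatures, I may normalize $h=1$.

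Next I would compute the isotropic curvatures of $z=\psi(r)+h\phi$, viewing it as the graph $z=f(x,y)$. A direct differentiation in $x,y$ gives $H=\tfrac12(\psi''+\psi'/r)$, where the $h$-terms cancel, and $K=\psi'\psi''/r-h^2/r^4$. Substituting these into the CRPC condition $H^2/K=(a+1)^2/(4a)$, i.e.\ $4aH^2=(a+1)^2K$, yields the second-order ODE
\[
 a\,\psi''^2-(a^2+1)\frac{\psi'\psi''}{r}+a\frac{\psi'^2}{r^2}+(a+1)^2\frac{h^2}{r^4}=0.
\]
For $a=1$ this reduces to $\tfrac14(\psi''-\psi'/r)^2=-h^2/r^4<0$, which is impossible for $h\ne0$; this explains why no surface is listed for $a=1$. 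For $a=-1$ it degenerates to the linear condition $H=0$, giving $\psi=c\log r$ (after absorbing an additive constant into a $z$-translation) and hence surface~\eqref{helical-1}, with $c$ a genuine parameter surviving the normalization of the pitch.

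The main case $a\ne\pm1$ rests on a substitution making the equation separable. Treating the ODE as a quadratic in $\psi''$, writing $p=\psi'$ and then $p=s/r$, it becomes the first-order equation $2a\,r\,s'=(a+1)^2 s\pm\sqrt{(a^2-1)^2 s^2-4a(a+1)^2 h^2}$, whose right-hand side depends on $s$ alone. Separating variables integrates $\mathrm{d}r/r$ against a rational function times the radical; the integration constant enters as an overall factor of $r$, i.e.\ an isotropic similarity, so no free parameter remains modulo similarity. A trigonometric substitution rationalizing the radical reduces both the resulting integral for $\log r$ and the integral $\psi=\int(s/r)\,\mathrm{d}r$ to elementary ones; carrying this out and introducing the angular parameter $u$ of that substitution produces $r$ and $\psi$ as the explicit functions of $u$ appearing in~\eqref{helicalane-1}.

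The heart of the argument, and the main obstacle, is this explicit integration and the recognition of its output as~\eqref{helicalane-1}: the two sign choices $\pm$ and the symmetry $a\leftrightarrow1/a$ of the CRPC condition $H^2/K=(a+1)^2/(4a)$ must be reconciled by checking that the branches yield isotropic-similar surfaces. It then remains to verify the converse—that the surfaces~\eqref{helicalane-1} and~\eqref{helical-1} indeed have constant isotropic principal-curvature ratio $a$, a direct substitution into $H^2/K$—and to globalize the local meridian representation by analyticity, all admissible CRPC surfaces being analytic by the Petrowsky theorem.
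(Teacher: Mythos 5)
Your proposal is correct and follows essentially the same route as the paper: reduce to the profile ODE via the helical parametrization $(u\cos v,\, u\sin v,\, f(u)+v)$ with pitch normalized to $1$, dispose of $a=\pm 1$ separately (impossibility for $a=1$, the logarithmic family for $a=-1$), and solve the remaining case by a trigonometric substitution and elementary integration. The only cosmetic difference is that you invoke the quadratic formula and separate variables after the homogeneity substitution $\psi'=s/r$, whereas the paper rewrites the CRPC equation as a difference of squares equal to $1$ and parametrizes the branch by $\csc 2s$ and $\cot 2s$ (which also absorbs your $\pm$ sign ambiguity automatically) --- equivalent computations leading to the same integrals and the same explicit surfaces.
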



\begin{figure}[htbp]
\hfill
\begin{overpic}[width=0.17\textwidth]{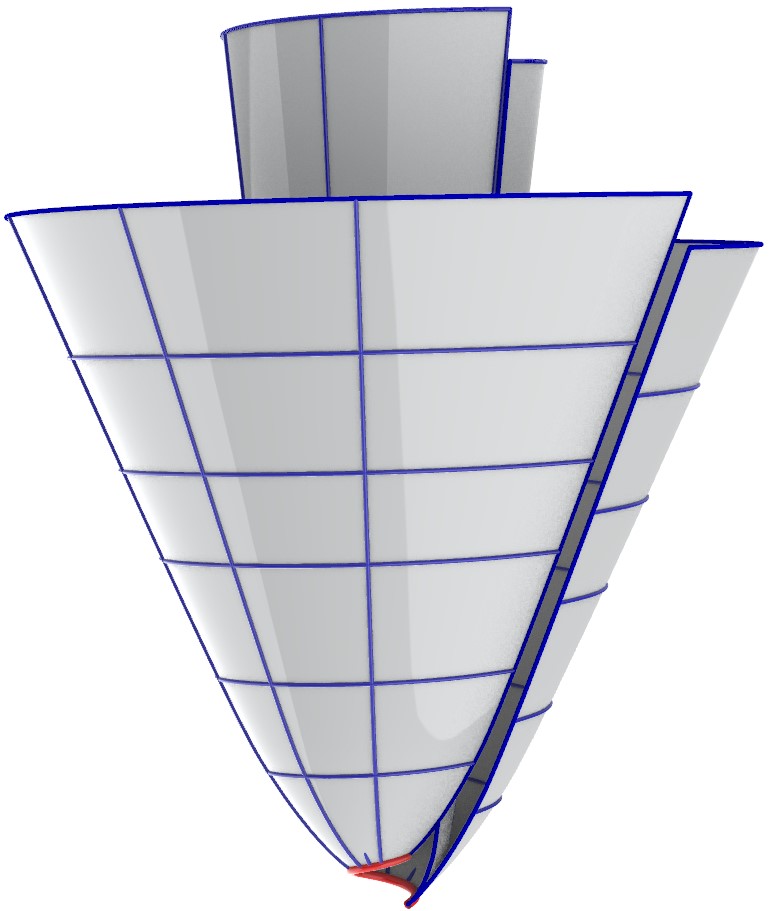}
\end{overpic}
\hfill
\begin{overpic}[width=0.17\textwidth]{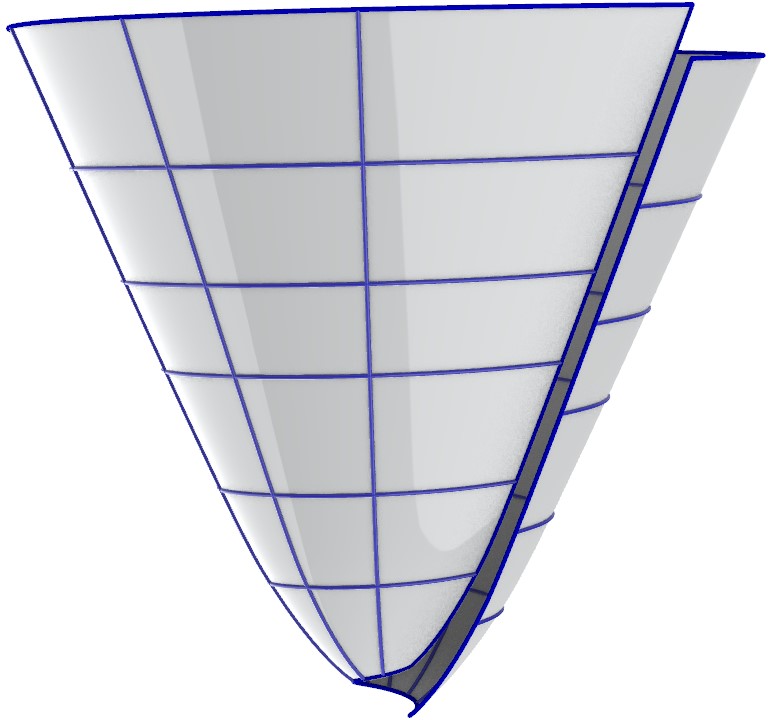}
\end{overpic}
\hfill
\begin{overpic}[width=0.075\textwidth]{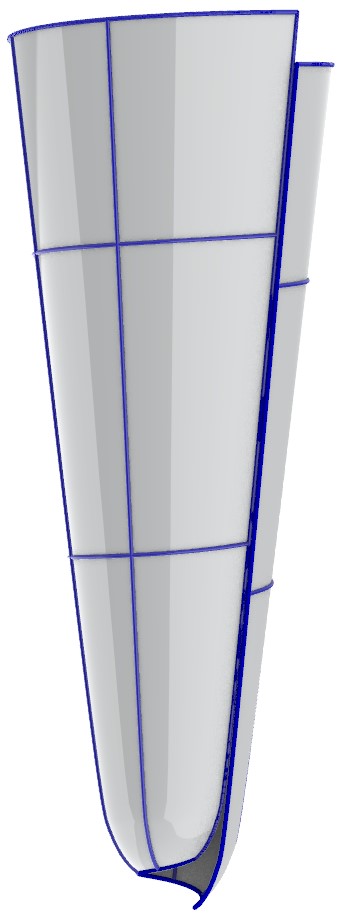}
\end{overpic}
\hfill
\begin{overpic}[width=0.25\textwidth]{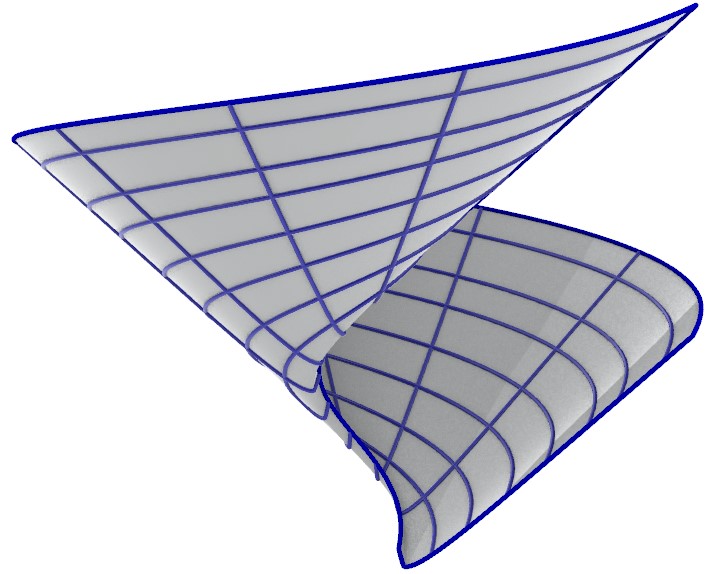}
\end{overpic}
\begin{overpic}[width=0.25\textwidth]{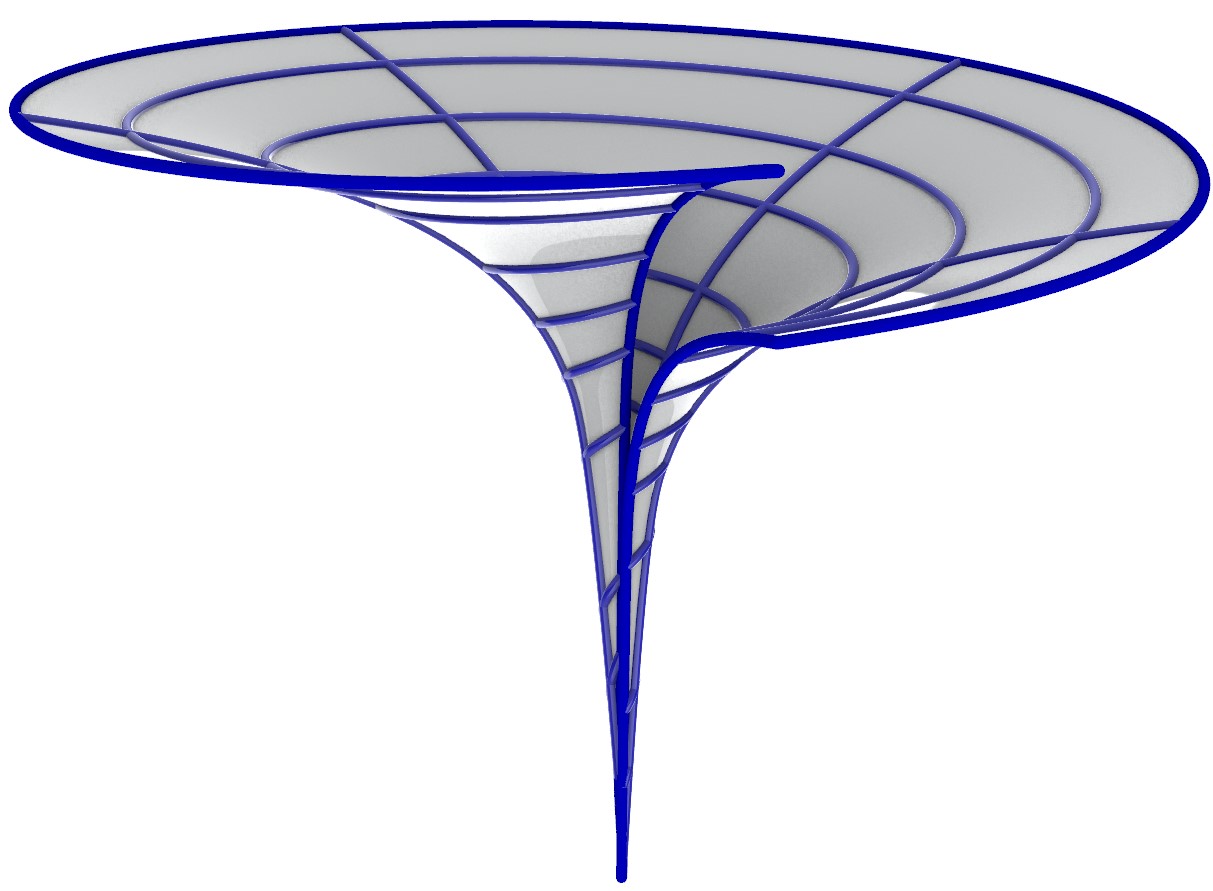}
\end{overpic}
\hfill{}
\caption{Helical isotropic CRPC surfaces (from the left to the right):
surface~\eqref{helicalane-1} for $a>0$; its outer part; its inner part; surface~\eqref{helicalane-1} for $a<0$; surface~\eqref{helical-1}. The singular curve $\tan^2(u) = a$ of the leftmost surface is depicted in red; it splits the surface into the parts $\tan^2(u) > a$ and $\tan^2(u) < a$ shown separately.
}
\label{fig:helical}
\end{figure}

\begin{proof}
    Since the pitch is nonzero, it can be set to $1$ by appropriate scaling along the $z$-axis. Take the section of the surface by a half-plane bounded by the $z$-axis. Since the surface is admissible, the section is a disjoint union of smooth curves without vertical tangents. Then one of those curves can be parameterized as $z=f(\sqrt{x^2+y^2})$ for some smooth function $f(u)$ defined in an interval inside the ray $u>0$. Hence, up to rotation about the $z$-axis, our surface can be parameterized as 
    \begin{equation}\label{eq-helical-parametrization}
        r(u,v) = (u\cos v, u\sin v, f(u)+v).
    \end{equation}
    Then the isotropic Gaussian and mean curvatures are (see \cite[Eq.~(4.4)]{karacan})
    \begin{equation} \label{eq-K-helical}
        K = \frac{u^3f''(u)f'(u)-1}{u^4}, \quad H = \frac{f'(u)+uf''(u)}{2u}.
    \end{equation}
Then the equation ${H^2}/{K} = (a+1)^2/(4a)$ is equivalent to
    \begin{equation}\label{eq-helical}
        au^2(f'(u)+uf''(u))^2 = (a+1)^2(u^3f''(u)f'(u)-1).
    \end{equation}
    
First let us solve the equation for $a= - 1$. In this case, $f''(u)u+f'(u) =0$. Hence $f(u) = c\log u + c_{1}$ for some constants $c$ and $c_{1}$. By performing the isotropic similarity $z\mapsto z-c_{1}$ we bring  our surface to form~\eqref{helical-1}.

Assume further that $a\ne - 1$. 
Then~\eqref{eq-helical} is equivalent to (see \cite[Section 1.1]{check}) 
\begin{equation}\label{eq-helical1}
    \left(\frac{(a-1)  \left(u^2 f''(u)+uf'(u)\right)}{2 (a+1) }\right)^2- \left(\frac{uf'(u)-u^2 f''(u)}{2}\right)^2=1.
\end{equation}
Thus the first fraction here vanishes nowhere (in particular, $a\ne 1$). We may assume that it is positive, otherwise change the sign of $f$ and $v$ in~\eqref{eq-helical-parametrization}, leading to just a rotation of the surface through the angle $\pi$ about the $x$-axis. Then 
the first fraction in~\eqref{eq-helical1} can be set to $\csc (2s(u))$ and the second one can be set to $\cot(2s(u))$ for some smooth function $s(u)$ with the values in $(0,\pi/2)$.  Therefore  by direct calculations (see  \cite[Section 1.2]{check}) 
\begin{align}\label{eq-helical2}
    f'(u) &= \frac{a  \cot s(u)+\tan s(u)}{(a-1) u},\\
    \label{eq-helical3}
    f''(u) &= \frac{ a\tan s(u) +\cot s(u)}{(a-1) u^2}.
\end{align}
Taking the derivative of~\eqref{eq-helical2} with respect to $u$ and combining it with \eqref{eq-helical3} we obtain 
\begin{equation}\label{eq-helical4}
    \frac{s'(u) (\tan s(u)-a \cot s(u))}{(a+1)}=\frac{1}{u}
\end{equation}
(see  \cite[Section 1.3]{check}).
In particular, $s'(u)\ne 0$ everywhere, hence $s(u)$ has an inverse function $u(s)$.
Integrating both sides of~\eqref{eq-helical4} and using that $s(u)$ assumes values in $(0,\pi/2)$, we get $u(s) = c_{2}\left(\cos s\sin^a s\right)^{-\frac{1}{a+1}}$ for some constant $c_{2}\ne 0$.

Denote $f(s):=f(u(s))$. By the chain rule, \eqref{eq-helical2}, and~\eqref{eq-helical4} we get 
\begin{equation}\label{eq-helical5}
     f'(s) = \left.\frac{f'(u)}{s'(u)}\right\rvert_{u=u(s)}= \frac{(\tan s+a \cot s) (\tan s-a \cot s)}{(a-1) (a+1)}.
\end{equation}
Integrating both sides of~\eqref{eq-helical5}, 
we get $$f(s) =s+ \cot 2 s + \frac{a^2+1}{a^2-1}\csc 2 s+ c_{3}= \bluevar{s+ \frac{1}{a^2-1}{\tan s}+\frac{a^2}{a^2-1}\cot s+ c_{3}}$$ for some constant $c_{3}$ \cite[Section 1.4]{check}. The isotropic similarity $(x,y,z)\mapsto(x/c_{2},y/c_{2},z-c_{3})$ brings our surface to form~\eqref{helicalane-1} (up to renaming the parameter $s$ to $u$).
\end{proof}


Family~\eqref{helical-1}  is a family of helical isotropic minimal surfaces 
joining helicoid~\eqref{eq-helicoid} and logarithmoid~\eqref{eq-logarithmoid} (after appropriate scaling of the $z$-coordinate). It can be alternatively described as the family of the graphs of the harmonic functions $z=\mathrm{Re}(C\log (x+iy))$ with varying complex parameter $C$ (again, up to isotropic similarity).
\bluevar{
It is the associated family of the helicoid in isotropic geometry \cite[Sections~4.1 and~4.3(a)]{ds21b}; thus surfaces~\eqref{helical-1} can be called ``\emph{isotropic helicatenoids}''. Just like their Euclidean analogs, they are isometric to each other for different $c$, after scaling of the $z$-coordinate by $1/\sqrt{1+c^2}$. Notice that in isotropic geometry, the most natural notion of \emph{isometry} requires preservation of both the metric and the isotropic Gaussian curvature~\cite{isometric-isotropic}. This is indeed the case here by~\eqref{eq-K-helical}.
}

\section{Translational surfaces}\label{sec-translational}

Now we present the main result \bluevar{of the paper}. If $\alpha(u)$ and $\beta(v)$ are two curves in $\mathbb{R}^3$,
then the surface $r(u,v)=\alpha(u)+\beta(v)$
is called the \emph{translational surface formed by $\alpha(u)$ and $\beta(v)$}.

\begin{theorem}\label{thm-two-planar} (Fig.~\ref{fig:translation})
An admissible translational surface formed by a planar curve $\alpha$ and another curve $\beta$ has a constant ratio $a\ne 0$ of isotropic principal curvatures, if and only if it is isotropic similar to a subset of one of the surfaces 
\begin{align}\label{eq-two-isotropic}
    r_{a}(u,v) 
    &= \begin{bmatrix}
           u \\
            v \\
           v^2+a u^2
         \end{bmatrix},\\
    \label{eq-isotropic+nonisotropic}
    r_b(u,v) 
    &= \begin{bmatrix}
          v+ b\cos v\\
            b\sin v+(b^2-1)\log\left\lvert b-\sin v\right\rvert+(1-b^2)u \\
           \exp u
         \end{bmatrix},
    &\text{if }a &\neq 1,\\
    \label{eq-nonisotropic+nonisotropic}
    r(u,v) 
    &= \begin{bmatrix}
           u+v \\
           \log\left\lvert\cos u \right\rvert - \log\left\lvert\cos v\right\rvert \\
           u
         \end{bmatrix},
    &\text{if }a &=-1,
\end{align}
where
we denote
$b:=(a+1)/(a-1)$. In particular, $\beta$ must be a planar curve as well. \bluevar{In~\eqref{eq-two-isotropic}, the variables $u$, $v$ run through $\mathbb{R}$. 
In~\eqref{eq-isotropic+nonisotropic}, $u$ runs through $\mathbb{R}$ and $v$ runs through an interval where $\sin v\ne b$ (for $a<0$) and  $b\sin v\ne 1$ (for $a>0$). In \eqref{eq-nonisotropic+nonisotropic},
$(u, v)$ runs through a subdomain of $(-\pi/2,\pi/2)^2\setminus\{u+v=0\}$.}
\end{theorem}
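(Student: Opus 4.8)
The plan is to represent the surface locally as a graph $z=f(x,y)$ and to exploit the defining feature of a translational surface, namely $r_{uv}=0$. If we parameterize the graph by $(u,v)$ with top view $(x(u,v),y(u,v))$ and height $z(u,v)$, then differentiating $z=f(x,y)$ twice gives the isotropic second fundamental form coefficients $L=z_{uu}-f_xx_{uu}-f_yy_{uu}$, $M=z_{uv}-f_xx_{uv}-f_yy_{uv}$, $N=z_{vv}-f_xx_{vv}-f_yy_{vv}$. For a translational surface $x_{uv}=y_{uv}=z_{uv}=0$, so $M=0$: the parameter net is conjugate. Hence $K=LN/(EG-F^2)$ and $2H=(GL+EN)/(EG-F^2)$, and the CRPC condition $H^2/K=(a+1)^2/(4a)$ reads $a(GL+EN)^2=(a+1)^2(EG-F^2)\,LN$. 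I would then use an isotropic similarity to normalize the plane of the planar generator $\alpha$, splitting into the cases where this plane is isotropic (vertical) or non-isotropic.

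In the isotropic-plane case I bring the plane of $\alpha$ to $\{y=0\}$ and reparameterize so that $\alpha(u)=(u,0,g(u))$ and $\beta(v)=(b_1(v),v,b_3(v))$ (admissibility supplies the nonvanishing derivatives needed for these normalizations). Eliminating $(u,v)$ then yields the semi-separated graph $f(x,y)=g(x-b_1(y))+b_3(y)$; writing $w:=x-b_1(y)$, its Hessian gives $K=g''(b_3''-b_1''g')$ and $2H=(1+(b_1')^2)g''-b_1''g'+b_3''$. With $X=g''$, $Y=g'$, $A=1+(b_1')^2$, $B=b_1''$, $C=b_3''$, the CRPC condition becomes $a(AX-BY+C)^2=(a+1)^2X(C-BY)$. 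For each fixed $y$ this says that the fixed analytic arc $\Gamma=\{(g''(w),g'(w))\}$ lies on the conic $\mathcal C_y$ whose six coefficients are explicit polynomials in $A(y),B(y),C(y)$.

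This is the algebraic-geometric core. Since $\Gamma$ is a fixed one-dimensional analytic arc contained in $\mathcal C_y$ for all $y$, and two distinct conics meet in at most four points, either all $\mathcal C_y$ coincide up to scale or they share a common line containing $\Gamma$. Comparing coefficients shows the first alternative forces $b_1''\equiv 0$ and $g$ quadratic, which by Example~\ref{ex-paraboloid} is the paraboloid \eqref{eq-two-isotropic}. In the second alternative $g'$ and $g''$ satisfy a linear relation: $g''\equiv 0$ is excluded (it gives $K=0$), $g''=\mathrm{const}$ again yields the paraboloid, and otherwise $g$ is exponential. A shear $z\mapsto z-c_0x$ removes the linear part of $g$ and simultaneously forces $b_3''=c_0b_1''$, so that $\beta$ becomes planar in a non-isotropic plane; substituting $X=\lambda Y$ into $\mathcal C_y$ and requiring it to vanish identically in $Y$ pins down $b_1$ through an ordinary differential equation and produces surface \eqref{eq-isotropic+nonisotropic}.

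In the non-isotropic-plane case I bring the plane of $\alpha$ to horizontal, so $z$ depends only on $v$; parameterizing $\alpha$ by arc length with tangent angle $\theta$ and signed curvature $k$, the same CRPC identity becomes a relation that is quadratic in $k$ with coefficients trigonometric in $\theta$ and depending on $\beta$ only through $b_1',b_2',b_1'',b_2''$. An analogous separation—forcing the fixed curvature diagram $\theta\mapsto k(\theta)$ onto a common component of the resulting family of curves—again makes $\beta$ planar and, according to whether the constant characteristic angle is right (the case $a=-1$) or not, recovers surface \eqref{eq-nonisotropic+nonisotropic}, surface \eqref{eq-isotropic+nonisotropic}, and the paraboloid. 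Finally I would verify directly that the three listed surfaces are indeed CRPC. I expect the main obstacle to be precisely this separation step: proving rigorously that a fixed analytic arc lying on a moving family of conics (or of higher-degree curves in the non-isotropic case) must lie on a common component, dealing cleanly with the reducible and degenerate members of the family, and then integrating the resulting ordinary differential equations while carefully tracking the exceptional values $a=1$ and $a=-1$ so that no surface is lost or spuriously gained.
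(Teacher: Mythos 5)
Your strategy is at its core the same as the paper's: normalize by an isotropic similarity according to whether the plane of $\alpha$ is isotropic or not, use admissibility to write everything as graphs, turn $H^2/K=(a+1)^2/(4a)$ into a polynomial identity, and exploit the fact that a fixed analytic arc built from $\alpha$ (your $\Gamma=\{(g'',g')\}$) lies on a family of algebraic curves whose coefficients depend only on the other parameter, so that the family must either be constant or share a low-degree component. That separation trick is precisely the paper's central device in its two hard lemmas (with coordinates $\bigl(\sqrt{f''/a},f'\bigr)$ and $(f'',f')$, leading to the conic \eqref{noniso-noniso1} and the curve \eqref{noniso-space1}). The genuine difference is the decomposition: the paper splits into five cases according to the nature of $\beta$ (isotropic planar / non-isotropic planar / non-planar), which lets it dispatch the planar-$\beta$ cases by elementary one-variable arguments (the CRPC relation is quadratic in a single ratio such as $f''/g''$ or $f''/f'$, which therefore takes finitely many values and is constant), reserving the algebraic-curve machinery for the non-planar-$\beta$ cases; your two-case split treats $\beta$ uniformly and derives its planarity as an output, which is conceptually cleaner but forces the full conic analysis in every case.

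Two points, however, need repair. First, for $a=-1$ your conic degenerates to the double line $(AX-BY+C)^2=0$, and your stated conclusion for the first alternative ("all $\mathcal{C}_y$ coincide up to scale $\Rightarrow b_1''\equiv 0$ and $g$ quadratic") is then false: coincidence of the lines only forces $(1+b_1'^2,\,b_1'',\,b_3'')$ to be proportional to a fixed vector, and this branch is exactly where the $a=-1$ solutions live (e.g.\ \eqref{eq-isotropic+nonisotropic} with $b=0$, and \eqref{eq-nonisotropic+nonisotropic}); so $a=-1$ is not an exceptional value to "track" but a separate branch requiring its own integration, as in the paper's Lemma~\ref{l-case-iii}. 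Second, the non-isotropic case is where the real work lies and your sketch understates it: the CRPC relation there does depend on $b_3$ (through $f_x,f_y\propto b_3'$ and through $b_3''$), the paraboloid cannot actually arise in that case (parallel non-isotropic sections of a paraboloid are homothetic, not translates), and above all the common-component analysis requires proving irreducibility of the family members — the paper's determinant computation for the conic and the discriminant computation \eqref{discriminant} for the spatial-$\beta$ curve. You correctly flag this as the main obstacle, but it is not a routine cleanup: without the irreducibility proof, "the curves share $\Gamma$" yields no usable conclusion, so as it stands the non-isotropic half of your argument is a plan rather than a proof.
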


\begin{figure}[htbp]
\begin{overpic}[width=0.2\textwidth]{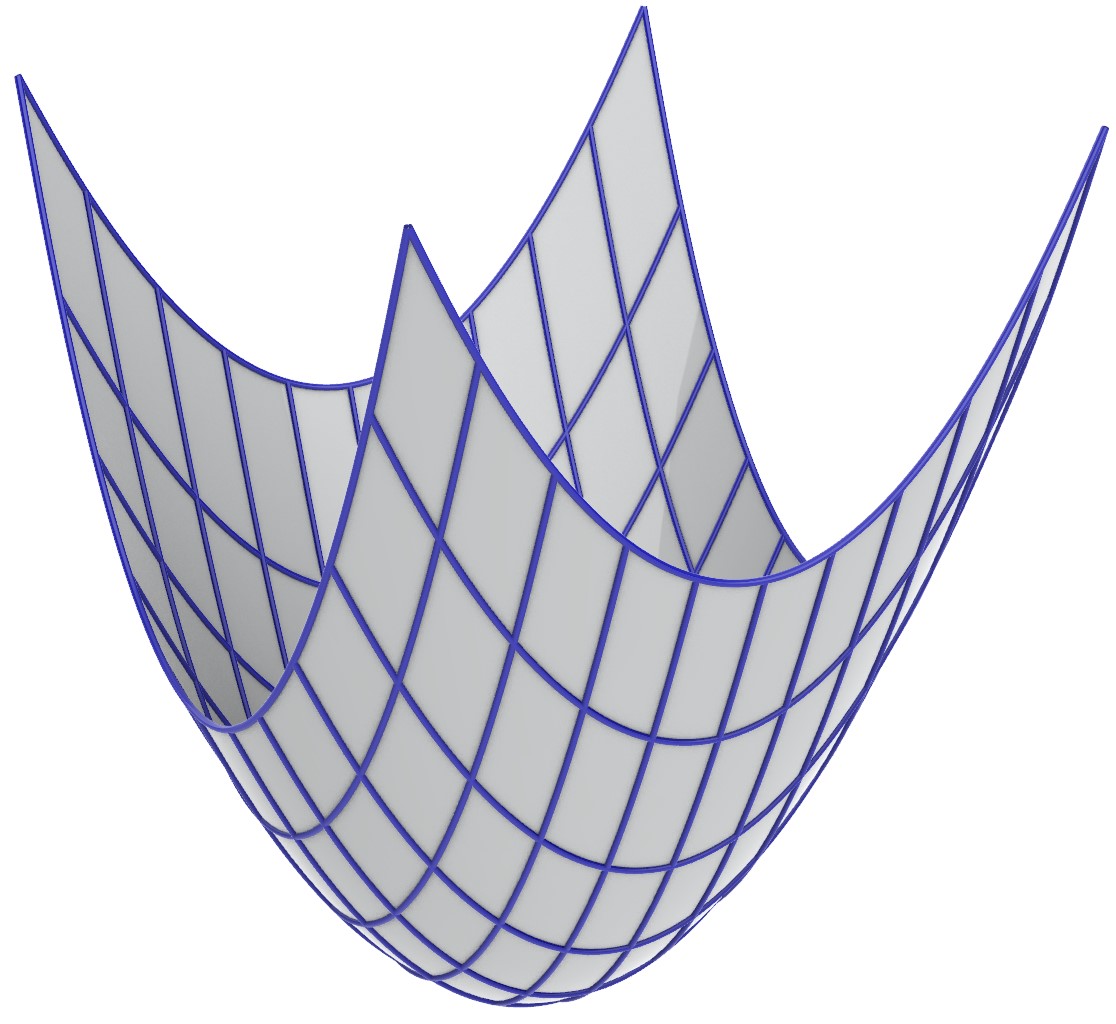}
\end{overpic}
\hfill
\begin{overpic}[width=0.23\textwidth]{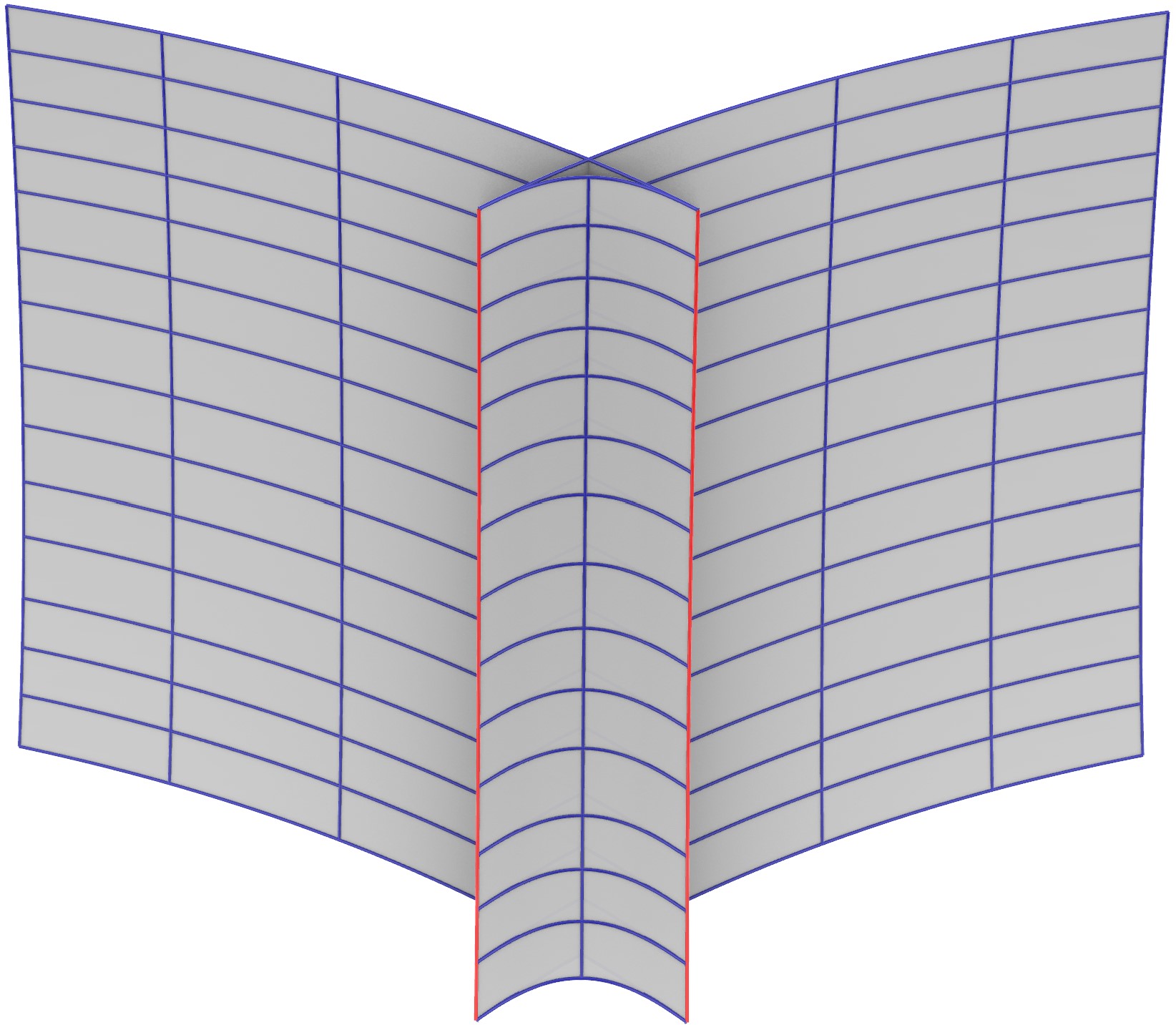}
\end{overpic}
\hfill
\begin{overpic}[width=0.27\textwidth]{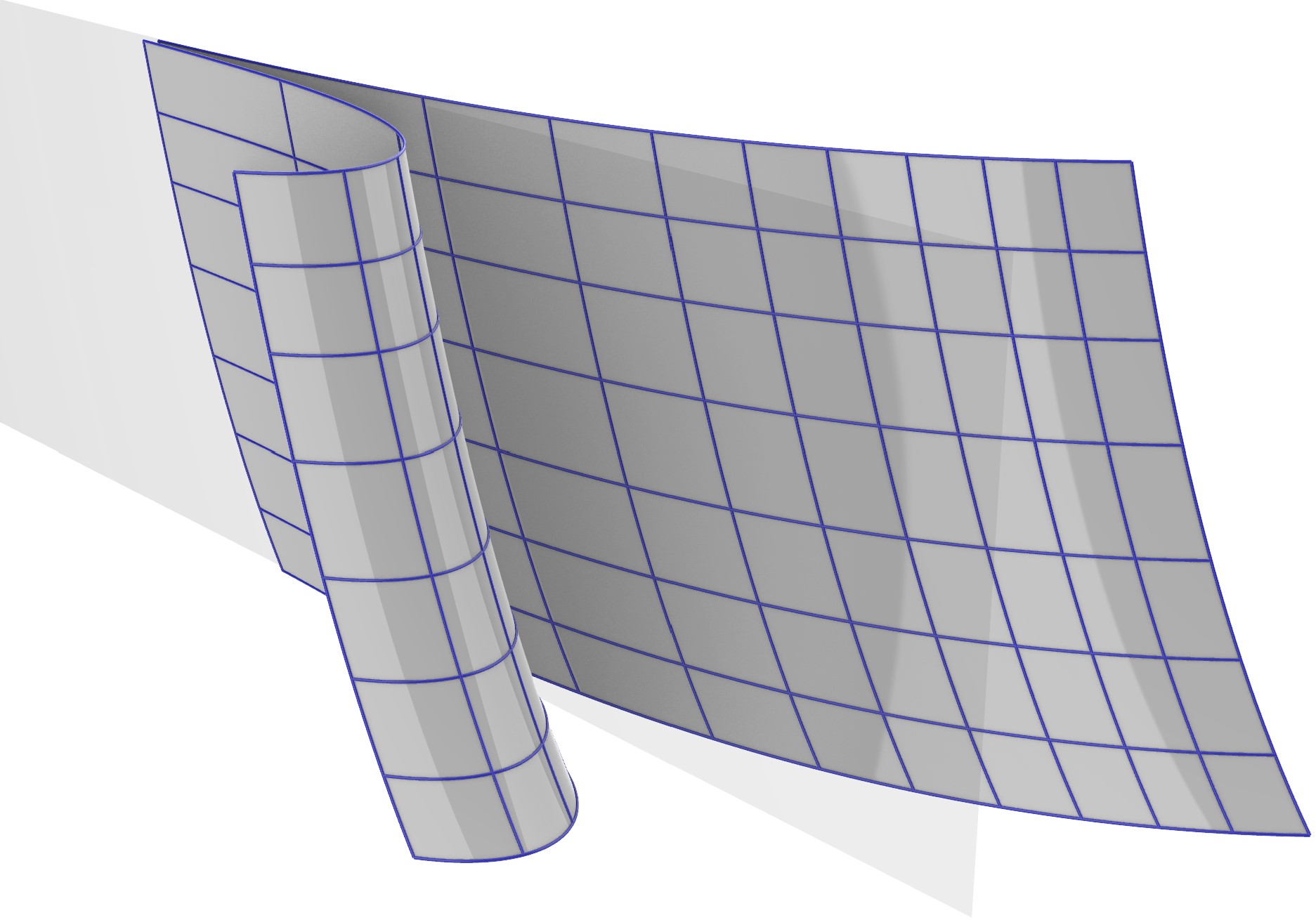}
\end{overpic}
\hfill
\begin{overpic}[width=0.26\textwidth]{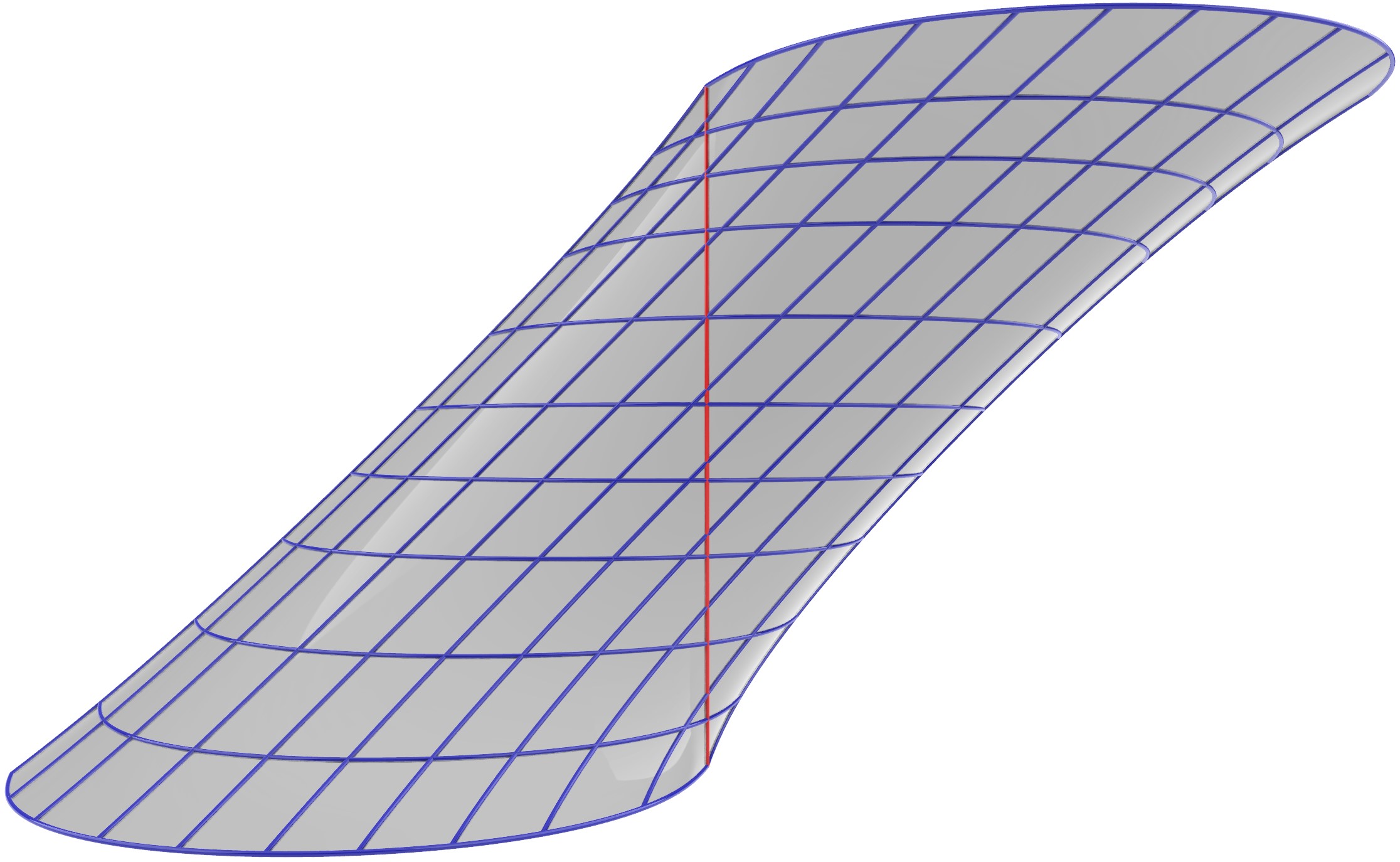}
\end{overpic}
\hfill{}
\caption{Translational isotropic CRPC surfaces (from the left to the right):
surface~\eqref{eq-two-isotropic} for $a>0$; surface~\eqref{eq-isotropic+nonisotropic} for $a>0$ and $a<0$; surface~\eqref{eq-nonisotropic+nonisotropic}. The red curves  of surface~\eqref{eq-isotropic+nonisotropic} are the singular curves  $b\sin(v) = 1$. The red line of ~\eqref{eq-nonisotropic+nonisotropic} is the line $u + v = 0$ where the surface has isotropic tangent planes.
 The transparent plane is the asymptotic plane of surface~\eqref{eq-isotropic+nonisotropic} with $a < 0$ obtained in the limit $v\to\arcsin b$. 
}
\label{fig:translation}
\end{figure}



\bluevar{Surfaces~\eqref{eq-two-isotropic}--\eqref{eq-nonisotropic+nonisotropic} for $a=-1$ can be viewed as the three \emph{isotropic Scherk minimal surfaces} \cite{Strubecker+1977+minimal,ds21b}.}

The theorem follows from Lemmas~\ref{l-case-i}--\ref{l-case-v}, where the following 5 cases are considered:
\begin{enumerate}
        \item $\alpha$ and $\beta$ are isotropic planar;
        \item $\alpha$ is isotropic planar and $\beta$ is non-isotropic planar;
        \item $\alpha$ and $\beta$ are non-isotropic planar;
        \item $\alpha$ is isotropic planar and $\beta$ is non-planar;
        \item $\alpha$ is non-isotropic planar and $\beta$ is non-planar.
    \end{enumerate}
In our arguments, we use the expressions for $K$ and $H$ obtained by combining  \cite[Eq.~(4,8), (4,11), (4,18), (5,1)]{strubecker:1942}. (The convention for the sign of $H$ in  \cite{strubecker:1942} is different from ours 
but this does not affect the equation $H^2/K = \frac{(a+1)^2}{4a}$ of CRPC surfaces.) 
\begin{lemma} \label{l-case-i}
    Under the assumptions of Theorem~\ref{thm-two-planar}, if $\alpha$ and $\beta$ are contained in isotropic planes, then the surface is isotropic similar to a subset of~\eqref{eq-two-isotropic}.
\end{lemma}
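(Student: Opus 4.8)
The plan is to reduce the surface to the graph of a separated-variables function over the top view and then solve the constant-ratio equation by separation of variables.

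First I would exploit that an isotropic (vertical) plane projects to a straight line in the top view, so each generatrix $\alpha$ and $\beta$ has a straight top view. At an admissible point the tangent plane is non-isotropic, i.e.\ its top view is $2$-dimensional; since this top view is spanned by the top-view directions of $\alpha'$ and $\beta'$, admissibility forces the two top-view lines to be non-parallel and forbids vertical tangents on either curve. Hence each generatrix can be reparameterized by signed distance along its top-view line and written as a graph, $\alpha(s)=(\mathbf c_1+s\mathbf e_1,\,g_1(s))$ and $\beta(t)=(\mathbf c_2+t\mathbf e_2,\,g_2(t))$, with $\mathbf e_1,\mathbf e_2$ linearly independent unit vectors in the $xy$-plane. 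The top view of $r(s,t)=\alpha(s)+\beta(t)$ is then $\mathbf c+s\mathbf e_1+t\mathbf e_2$, an affine isomorphism onto a region of the $xy$-plane, so $s$ and $t$ are affine functions of the Cartesian coordinates, $s=\mathbf a\cdot(x,y)+\mathrm{const}$ and $t=\mathbf b\cdot(x,y)+\mathrm{const}$ with $\mathbf a,\mathbf b$ linearly independent. Thus the surface is the graph of $f(x,y)=g_1(s)+g_2(t)$.

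Next I would read off the isotropic shape operator as the Hessian $\nabla^2 f=g_1''\,\mathbf a\mathbf a^{T}+g_2''\,\mathbf b\mathbf b^{T}$, a sum of two fixed rank-one symmetric matrices. Writing $G_1=g_1''(s)$, $G_2=g_2''(t)$, $P=\lvert\mathbf a\rvert^2$, $Q=\lvert\mathbf b\rvert^2$ and $J=\det[\mathbf a\ \mathbf b]\neq0$, one gets directly $H=\tfrac12(P\,G_1+Q\,G_2)$ and $K=J^2\,G_1G_2$ (in agreement with the translational-surface formulas for $H$ and $K$ referenced above). The CRPC equation $H^2/K=(a+1)^2/(4a)$ then becomes the homogeneous quadratic relation
\[
a\,(P\,G_1+Q\,G_2)^2=(a+1)^2J^2\,G_1G_2,
\]
which must hold throughout the domain.

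Finally I would solve this by separation of variables. Fixing any value of $t$, the displayed identity is a quadratic equation in $G_1$ with leading coefficient $aP^2\neq0$ — here $P>0$ because $\mathbf a$, being a row of the invertible change $(x,y)\mapsto(s,t)$, is nonzero — so $G_1(s)$ takes at most two values and, being continuous, is constant; by the symmetric argument $G_2$ is constant as well. Therefore $g_1$ and $g_2$ are quadratic polynomials, $f$ is a quadratic function with $\det\nabla^2 f=K\neq0$, and by Example~\ref{ex-paraboloid} this paraboloid with a vertical axis is isotropic similar to $z=x^2+ay^2$, i.e.\ to a subset of~\eqref{eq-two-isotropic}. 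I expect the main obstacle to lie in the geometric reduction of the first paragraph rather than in the algebra: one must use admissibility carefully to exclude parallel top-view lines and vertical tangents, so that the surface genuinely becomes a graph with a separated representation over the top view. Once that representation and the resulting quadratic relation in $(G_1,G_2)$ are in place, the non-vanishing of the leading coefficient $aP^2$ forces both second derivatives to be constant, and the paraboloid conclusion is immediate.
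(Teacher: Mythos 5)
Your proposal is correct and follows essentially the same route as the paper: reduce to a graph $z=g_1(s)+g_2(t)$ with $s,t$ affine in $(x,y)$, turn $H^2/K=(a+1)^2/(4a)$ into a quadratic relation forcing $g_1''$ and $g_2''$ to be constant, and conclude via Example~\ref{ex-paraboloid}. The only cosmetic difference is that you compute $H$ and $K$ directly from the rank-one decomposition of the Hessian, whereas the paper first rotates the plane of $\beta$ to $x=0$ and cites the translational-surface curvature formulas before the same separation-of-variables step.
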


\begin{proof}
Performing a rotation about a vertical axis, we can take the planes of \bluevar{$\alpha$ and}~$\beta$ to the planes \bluevar{$y=kx$ and $x=0$ for some $k\in\mathbb{R}$}. Since the surface is admissible, $\alpha$ and $\beta$ cannot have vertical tangents. Thus the curves can be parameterized as  $\alpha(u) = (u, ku, f(u))$ and $\beta(v) = (0, v, g(v))$ for 
some functions $f(u)$ and $g(v)$ defined on some intervals. Then our surface is
    \begin{equation}
         r(u,v) = (u, ku + v, f(u)+g(v)).
    \end{equation}
Then the isotropic Gaussian and mean curvatures are (see \cite[Sections~4--5]{strubecker:1942} and \cite[Eq.~(3.2)]{aydin2017})
    \begin{equation}
        K = f''(u)g''(v), \quad H = \frac{(k^2+1)\,g''(v)+f''(u)}{2}.
    \end{equation}
    Since $\bluevar{K}\ne 0$, it follows that 
    $f''(u)g''(v)\neq 0$ for all $(u,v)$ from the domain. Then the equation ${H^2}/{K} = (a+1)^2/(4a)$ is equivalent to the following differential equation:
    \begin{equation}\label{eq4}
        \left(k^2+1+\frac{f''(u)}{g''(v)}\right)^2 = \frac{(a+1)^2}{a}\,\,\frac{f''(u)}{g''(v)}.
    \end{equation}
    By \eqref{eq4} we get ${f''(u)}/{g''(v)} = \mathrm{const}$. Then $f''(u) = \mathrm{const}$ and $g''(v) = \mathrm{const}$. 
    Thus $f(u)+g(v)$ is a polynomial of degree $2$. By Example~\ref{ex-paraboloid}, our surface is isotropic similar to a subset of~\eqref{eq-two-isotropic}.
\end{proof}

One can see the same geometrically. The directions of the two \bluevar{parametric curves} $u=\mathrm{const}$ and $v=\mathrm{const}$ through each point of a translational surface are conjugate (by the \bluevar{property mentioned} 
in Section~\ref{sec-preliminaries}). The top view of the \bluevar{parametric curves} consists of two families of parallel lines.  Therefore the top view of the two conjugate directions is the same everywhere. But a pair of conjugate directions and the ratio $a \neq 1$ of isotropic principal curvatures are enough to determine the two isotropic principal directions up to symmetry. 
By continuity, the top views of the isotropic principal directions are the same everywhere. Consider one \bluevar{parametric curve} $v=\mathrm{const}$ and a pair of \bluevar{parametric curves} $u=\mathrm{const}$. These three \bluevar{parametric curves} intersect at two points. The \bluevar{parametric curves} $u=\mathrm{const}$ are translations of each other along the \bluevar{parametric curve} $v=\mathrm{const}$. Hence the isotropic normal curvature of the former two \bluevar{parametric curves} is the same at these two points. Since the ratio of isotropic principle curvatures is constant, by the (isotropic) Euler formula (see \cite[Eq. ~(4,28)]{strubecker:1942}) it follows that the isotropic principle curvatures are also the same. Thus, again by the Euler formula, the isotropic normal curvature of the \bluevar{parametric curve} $v=\mathrm{const}$ is the same everywhere. Thus the latter \bluevar{parametric curve}, hence $\alpha(u)$, is a parabolic isotropic circle. Analogously, $\beta(v)$ is a parabolic isotropic circle, and our surface is a paraboloid.  

\begin{lemma} \label{l-case-ii}
    Under the assumptions of Theorem~\ref{thm-two-planar}, if $\alpha$ and $\beta$ are contained in an isotropic and a non-isotropic plane respectively, then the surface is isotropic similar to a subset of~\eqref{eq-isotropic+nonisotropic}.
\end{lemma}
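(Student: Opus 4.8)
The plan is to follow the strategy of Lemma~\ref{l-case-i}: normalize the two generating curves by isotropic similarities, compute the isotropic curvatures of the translational surface, and show that the CRPC equation $H^2/K=(a+1)^2/(4a)$ separates the variables. First I would use a rotation about a vertical axis to place the isotropic plane of $\alpha$ into $x=0$, and an isotropic congruence $z\mapsto z-px-qy$ (which preserves the translational structure and keeps the plane $x=0$ vertical) to make the non-isotropic plane of $\beta$ horizontal; absorbing the resulting constant into $\alpha$, I may write $\alpha(u)=(0,\alpha_2(u),\alpha_3(u))$ and $\beta(v)=(\beta_1(v),\beta_2(v),0)$. Admissibility forces $\alpha_2'\ne 0$ and $\beta_1'\ne 0$, and I would reparametrize $u$ so that $\alpha_2$ is linear, i.e.\ $\alpha_2''\equiv 0$.

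Next I would compute the isotropic Hessian of the local graph $z=F(x,y)$ in these parameters (equivalently, apply the formulas of \cite{strubecker:1942}). Since $x_{uv}=y_{uv}=z_{uv}=0$, the mixed entry of the isotropic second fundamental form vanishes identically, so the curvatures take a manageable diagonal form in which $K$ and $H$ depend on $u$ only through $\alpha_3'$ and $\alpha_3''$, and on $v$ only through the top-view slope $\sigma:=\beta_2'/\beta_1'$ and the quantity $\tau:=\sigma'/\beta_1'=d\sigma/dx$. Because $a\ne 0$ we have $K\ne 0$, so $\alpha_3',\alpha_3'',\tau$ are nonvanishing. Substituting into $4aH^2=(a+1)^2K$ and dividing by $(\alpha_3'')^2$ turns the equation into a polynomial identity whose only $u$-dependence is through $R:=\alpha_3'/\alpha_3''$.

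The separation step is the conceptual heart. Viewing this identity as a quadratic in $R$ with $v$-dependent coefficients and with constant term proportional to $a(1+\sigma^2)^2\ne 0$, analyticity shows that if $R$ were non-constant it would run through an interval of values, forcing all three coefficients to vanish for each fixed $v$ — impossible because of the nonzero constant term. Hence $R$ is constant, $\alpha_3''/\alpha_3'$ is constant, and $\alpha_3$ is an exponential; after a vertical scaling and a shift I would normalize $\alpha_3=e^u$, while a top-view scaling fixes the slope of $\alpha_2$ to the convenient value $1-b^2$, where $b:=(a+1)/(a-1)$.

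What remains is a single first-order ODE for the top-view curve of $\beta$, namely a quadratic in $\tau=d\sigma/dx$ whose discriminant simplifies to $(a+1)^2\bigl[(a-1)^2-4a\sigma^2\bigr]$; this both forces $a\ne 1$ and produces $b$ through the identity $b^2-1=4a/(a-1)^2$. I would integrate this ODE by the trigonometric substitution that rationalizes $\sqrt{(a-1)^2-4a\sigma^2}$, recover $\beta_1(v)=v+b\cos v$ together with the logarithmic primitive for $\beta_2(v)$, and thereby arrive at~\eqref{eq-isotropic+nonisotropic}. I expect the main obstacle to be exactly this last integration: selecting the correct sign branch, obtaining the primitive with the $(b^2-1)\log\lvert b-\sin v\rvert$ term, and tracking the case distinction $a>0$ versus $a<0$ that governs the admissible range of $v$ recorded in Remark~\ref{rem-domain}. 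The curvature computation and the separation argument are routine by comparison.
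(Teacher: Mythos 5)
Your proposal is correct and follows essentially the same route as the paper's proof: the same normalization (shear $z\mapsto z\pm px\pm qy$ plus vertical-axis rotation, parameterizing $\alpha$ by its $y$-coordinate and $\beta$ by its $x$-coordinate), the same curvature formulas, the deduction that $f''/f'=\mathrm{const}$ from the mixed quadratic identity, the same resulting ODE for the top view of $\beta$ with discriminant $(a+1)^2\bigl[(a-1)^2-4a\,g'^2\bigr]$, and an equivalent integration (your trigonometric substitution is the paper's hodograph step $p=g'(v)$ followed by the change of variable $w$). Your quadratic-in-$R$ separation argument with nonvanishing constant term $a(1+\sigma^2)^2$ correctly fills in the step the paper states tersely as ``By~\eqref{iso-noniso} we get $f''/f'=\mathrm{const}$.''
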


\begin{proof}
Performing an isotropic similarity of the form $z\mapsto z+px+qy$ we can take the plane of $\beta$ to the plane $z=0$. After that, performing a rotation about a vertical axis, 
we can take the plane of $\alpha$ to the plane $x=0$. Since the surface is admissible, $\alpha$ cannot have vertical tangents. Thus it can be parameterized as $\alpha(u) = (0, -u, f(u))$ for some function $f(u)$. Since the surface $r(u,v) = \alpha(u)+\beta(v)$ is admissible, $\beta$ cannot have tangents parallel to the $y$-axis. Thus it can be parameterized as $\beta(v) = (v, g(v), 0)$
for some function $g(v)$. Then the surface is
     \begin{equation}\label{eq-translation-our-surface}
             r(u,v) = (v, -u+g(v), f(u)).
     \end{equation}
Hence the isotropic Gaussian and mean curvatures are (see \cite[Sections~4--5]{strubecker:1942}) 
    \begin{equation}
        K = f'(u)f''(u)g''(v), \quad H = \frac{f'(u)g''(v)+\left(1+g'(v)^2\right)f''(u)}{2}.
    \end{equation}
Since $\bluevar{K}\ne 0$, it follows that $f'(u)f''(u)g''(v) \neq 0$. Therefore the equation ${H^2}/{K} = (a+1)^2/(4a)$ is equivalent to
    \begin{equation}\label{iso-noniso}
        \left((1+g'(v)^2)\frac{f''(u)}{f'(u)}+g''(v)\right)^2 = \frac{(a+1)^2}{a}\frac{f''(u)}{f'(u)}g''(v).
    \end{equation}
   By~\eqref{iso-noniso} we get ${f''(u)}/{f'(u)}=\mathrm{const}$. Hence $f(u) = pe^{\lambda u} + q$ for some constants $p,q,\lambda$, where $p,\lambda\neq 0$. 
   Substituting   $\lambda$ for $\frac{f''(u)}{f'(u)}$   in~\eqref{iso-noniso}, we get (see \cite[Section ~2.1]{check}) 
    \begin{equation}\label{iso-noniso2}
        g''(v) = \frac{\lambda}{4a}\left(a+1 \pm \sqrt{(a-1)^2-4ag'(v)^2}\right)^2.
    \end{equation}
    To solve the resulting ODE, introduce the new variable $p=g'(v)$. Since $g''(v)\ne 0$, it follows that the function $g'(v)$ has a smooth inverse $v(p)$ and there is a well-defined composition $g(p):=g(v(p))$. Substituting $g'(v) = p$ into \eqref{iso-noniso2} and using $g''(v) = \frac{dp}{dv} =1/\frac{dv}{dp}= p/\frac{dg}{dp}$ for $p\ne 0$, we obtain
    \begin{align}\label{iso-noniso3}
        \lambda \frac{dg}{dp} &= \frac{4ap}{\left(a+1 \pm \sqrt{(a-1)^2-4ap^2}\right)^2}, \\
        \lambda \frac{dv}{dp} &= \frac{4a}{\left(a+1 \pm \sqrt{(a-1)^2-4ap^2}\right)^2}.
    \end{align}
    Clearly, $a\ne 1$. Integrating, we obtain (see \cite[Section ~2.2]{check}) 
    \begin{align}\label{iso-noniso4}
         \lambda g(p) &=-\log{\left\lvert a+1\pm \sqrt{(a-1)^2-4ap^2}\right\rvert} - \frac{a+1}{a+1\pm \sqrt{(a-1)^2-4ap^2}} + C_{1}, \\ 
         \notag
         \lambda v(p) &=
         \frac{(a-1)^2}{4a} \left[\arctan p \mp \arctan \left(\frac{(a+1) p}{\sqrt{(a-1)^2-4 a p^2}}\right)\right]+\\
         &\hspace{5.2cm}+\frac{(a+1) p}{a+1 \pm \sqrt{(a-1)^2-4 a p^2}}+C_{2}
         \label{iso-noniso4a}
    \end{align}
    for some constants $C_1$, $C_2$.
    Denote by $w$ the expression in square brackets in \eqref{iso-noniso4a} plus $\pm\sign{(a-1)}\cdot\pi/2$.
    Passing to the new variable $w$ and using the notation $b := (a+1)/(a-1)$,
    we get (see  \cite[Section ~2.3]{check}) 
    \begin{align}\label{iso-noniso5}
         \lambda g(w) &= \frac{(b^2-1)\log\lvert b-\sin w\rvert + b\sin w+C'_{1}}{(b^2-1)}, \\
         \lambda v(w) &= \frac{w + b\cos{w} + C'_{2}}{(b^2-1)}
    \end{align}
    for some other constants $C_1'$ and $C_2'$. Performing the isotropic similarity $$(x,y,z)\mapsto \left(\lambda(b^2-1)x-C'_{1}, \lambda(b^2-1)y-C'_{2},\frac{z-q}{p}\right)$$ 
    and renaming the parameters $u$ and $w$ to $u/\lambda$ and $v$, we bring~\eqref{eq-translation-our-surface} to form~\eqref{eq-isotropic+nonisotropic}. 
    \end{proof}

\begin{lemma} \label{l-case-iii}
    Under the assumptions of Theorem~\ref{thm-two-planar}, if $\alpha$ and $\beta$ are contained in non-isotropic planes, then the surface is isotropic similar to a subset of~\eqref{eq-nonisotropic+nonisotropic}.
\end{lemma}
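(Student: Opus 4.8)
The plan is to reuse the template of Lemmas~\ref{l-case-i} and~\ref{l-case-ii}, the essential novelty being that in this case the equation $H^2/K=(a+1)^2/(4a)$ will force $a=-1$. First I would normalize the two non-isotropic planes. A shear $z\mapsto z+px+qy$ takes the plane of $\beta$ to $z=0$; then a rotation about the vertical axis together with a scaling of the top view and of the vertical direction takes the plane of $\alpha$ to $z=x$. (The two planes cannot be parallel: otherwise $z$ would be constant on the surface and $K=0$, contradicting $a\ne0$.) Discarding the degenerate case in which $\alpha$ or $\beta$ is a straight line, which would make the surface a cylinder with $K=0$, we have $\alpha_1'\not\equiv0$ and $\beta_1'\not\equiv0$; by the analyticity of CRPC surfaces it suffices to work on a small piece, so I may assume $\alpha_1',\beta_1'\ne0$ and parameterize both curves by their $x$-coordinate, obtaining
\begin{equation*}
\alpha(u)=(u,f(u),u),\quad \beta(v)=(v,g(v),0),\quad r(u,v)=(u+v,\,f(u)+g(v),\,u).
\end{equation*}

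Next I would compute the isotropic curvatures (from the formulas of~\cite{strubecker:1942}, or directly by writing the surface as a graph $z=F(x,y)$ and using the chain rule); omitting arguments, they are
\begin{equation*}
K=\frac{f''g''}{(g'-f')^{4}},\qquad H=\frac{(1+g'^2)f''+(1+f'^2)g''}{2(g'-f')^{3}}.
\end{equation*}
Thus $K\ne0$ forces $f'',g''\ne0$, and admissibility is precisely $g'\ne f'$. Writing $\lambda:=(a+1)^2/a$, the CRPC equation becomes $\big[(1+g'^2)f''+(1+f'^2)g''\big]^2=\lambda\,f''g''\,(g'-f')^2$. The decisive step is the substitution $f'=\tan\theta(u)$, $g'=\tan\omega(v)$, which uses the identity $(g'-f')^2=\big((1+f'^2)(1+g'^2)\big)\sin^2(\omega-\theta)$. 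Since $f'',g''\ne0$, the maps $u\mapsto\theta$ and $v\mapsto\omega$ are local diffeomorphisms; reparameterizing by $\theta,\omega$ and setting $p(\theta):=du/d\theta$, $q(\omega):=dv/d\omega$ (analytic and nonvanishing), the equation collapses to
\begin{equation*}
\frac{(p(\theta)+q(\omega))^{2}}{p(\theta)\,q(\omega)}=\lambda\,\sin^{2}(\omega-\theta).
\end{equation*}

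The heart of the argument, and the step I expect to be the main obstacle, is to prove that this identity is impossible unless $\lambda=0$, i.e.\ $a=-1$. Suppose $\lambda\ne0$. The left-hand side minus $2$ equals $p/q+q/p$ and depends only on $\xi:=\omega-\theta$, so it is annihilated by $\partial_\theta+\partial_\omega$; carrying out this differentiation and discarding a nonvanishing factor yields $(p^2-q^2)(p'/p-q'/q)\equiv0$. Because the real-analytic functions on a connected domain form an integral domain, one factor must vanish identically: either $p\equiv\pm q$ are constant, or $p'/p\equiv q'/q\equiv\kappa$ is constant, giving $p=p_0e^{\kappa\theta}$ and $q=q_0e^{\kappa\omega}$. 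In either case $p/q+q/p$ is either a constant or a sum $c_1e^{-\kappa\xi}+c_2e^{\kappa\xi}$ with $c_1c_2=1$; neither can equal $\lambda\sin^2\xi-2$ on an interval, since $1,\cos2\xi,e^{\kappa\xi},e^{-\kappa\xi}$ are linearly independent for $\kappa\ne0$ while $\sin^2\xi$ is nonconstant. This contradiction shows $a=-1$.

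Finally, with $a=-1$ the condition is just $H=0$, i.e.\ $f''/(1+f'^2)=-g''/(1+g'^2)$; separating variables makes both sides equal to a constant $c$, whence $f'=\tan(cu+c_1)$ and $g'=-\tan(cv-c_2)$, and integrating gives $f$ and $g$ as logarithms of cosines. The residual freedom (uniform scaling, which preserves both planes $z=0$ and $z=x$, together with translations and a reflection) sets $c=1$ and clears the constants, bringing the surface to the form~\eqref{eq-nonisotropic+nonisotropic}; in particular $\beta$ is planar, as asserted.
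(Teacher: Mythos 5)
Your proposal is correct, and its core step is genuinely different from the paper's. After the (identical) normalization, parameterization $r(u,v)=(u+v,f(u)+g(v),u)$ and curvature formulas, the paper argues algebraically: it fixes $v$, observes that a solution $f$ traces a regular curve in the coordinates $(X,Y)=\bigl(\sqrt{f''/a},f'\bigr)$ lying on a conic whose coefficients depend on $v$, proves that conic irreducible via a determinant computation (treating $a=1$ separately as a sum of two squares), and concludes that conics for distinct $v$ must coincide, forcing $g''\equiv 0$ --- a contradiction for $a\ne -1$. You instead substitute $f'=\tan\theta$, $g'=\tan\omega$, reparameterize, and collapse the CRPC equation to the separated functional identity $(p+q)^2/(pq)=\lambda\sin^2(\omega-\theta)$, which you kill by applying $\partial_\theta+\partial_\omega$, factoring in the integral domain of analytic functions, and invoking linear independence of $1,\cos 2\xi,e^{\pm\kappa\xi}$; I have checked the identity $(g'-f')^2=(1+f'^2)(1+g'^2)\sin^2(\omega-\theta)$, the reduction, and the case analysis, and they are sound. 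Your route is more elementary, dispenses with the separate $a=1$ case and the $\pm$-sign bookkeeping, and re-derives the $a=-1$ solutions from the same analysis; what it gives up is generality --- the paper's ``regular curve inside an irreducible algebraic curve with coefficients depending on the other variable'' technique is exactly the machinery reused in the harder Lemma~\ref{l-case-v} (non-planar $\beta$), where your trigonometric separation, which exploits planarity of both generators, would not carry over.

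Two small points to tighten. First, your opening claim that ``by analyticity it suffices to work on a small piece'' needs one more sentence: the lemma is about the whole surface, and the paper closes this gap by a maximal-interval argument (at an endpoint one would need $\lim f$ finite while $f'\to\infty$, which the explicit solutions exclude); your version works because the analytic continuation of~\eqref{eq-nonisotropic+nonisotropic} adds only the non-admissible vertical segment over $u+v=0$, so a connected admissible surface sharing a piece with it stays inside it. Second, ``a reflection'' is not available in $G^8$ (top views of isotropic similarities are direct similarities), but the half-turn $(x,y,z)\mapsto(-x,-y,-z)$ combined with the reparametrization $(u,v)\mapsto(-u,-v)$ achieves the needed sign change, so your normalization does go through.
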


\begin{proof}
Similarly to the previous lemma, performing an isotropic similarity of the form $z\mapsto px+qy+rz$ and a rotation about a vertical axis we take the planes of $\alpha$  and $\beta$  to the planes $z = x$ and  $z=0$ respectively. The tangent to $\alpha$ cannot be perpendicular to the $x$-axis at each point,
because otherwise $\alpha$ is a straight line and $a=0$, contradicting to the assumptions of the theorem. Thus $\alpha'(u)\not\perp Ox$ at some point $u$. By continuity, the same is true in an interval around $u$. In what follows switch to an inclusion-maximal interval $(u_1,u_2)$ with this property. Notice that then each endpoint $u_k$ is either an endpoint of the domain of $\alpha(u)$ or there exist finite $$\lim_{u\to u_k}\alpha(u) \quad \text{and} \quad \lim_{u\to u_k}\alpha'(u)/\lvert\alpha'(u)\rvert\perp Ox.$$ On the interval $(u_1,u_2)$, the curve $\alpha$ can be parameterized as $\alpha(u) = (u, f(u), u)$ for some smooth function $f(u)$. Analogously, on a suitable interval $(v_1,v_2)$ the curve $\beta$ can be parameterized as $\beta(v) = (v, g(v), 0)$ for some smooth $g(v)$. Therefore a part of our surface can be parameterized as
     \begin{equation}\label{eq-part-surface}
             r(u,v) = (u+v, f(u)+g(v), u).
     \end{equation}
Thus the isotropic Gaussian and mean curvatures are (see \cite[Sections~4--5]{strubecker:1942} and \cite[Eq.~(6.3), (6.8)]{Aydin2020})
    \begin{equation}
        K = \frac{f''(u)g''(v)}{(f'(u)-g'(v))^4}, 
  \quad H = \frac{(1+f'(u)^2)g''(v)+(1+g'(v)^2)f''(u)}{2\lvert f'(u)-g'(v)\rvert^3}.
    \end{equation}
Here  $f''(u)g''(v) \neq 0$ and $f'(u)-g'(v) \neq 0$ because $\bluevar{K}\ne 0$ and the surface is admissible. Hence the equation ${H^2}/{K} = (a+1)^2/(4a)$ is equivalent to
    \begin{equation}\label{noniso-noniso}
        \left((1+f'(u)^2)g''(v)+(1+g'(v)^2)f''(u)\right)^2 = \frac{(a+1)^2}{a}f''(u)g''(v)(f'(u)-g'(v))^2.
    \end{equation}
    
First let us solve the equation in the case when $a=-1$ (this was done in \cite{Milin,Strubecker+1977+minimal}). In this case, $$f''(u)/(1+f'(u)^2) = -g''(v)/(1+g'(v)^2) = c$$ for some constant $c\ne0$. Hence 
$$f(u) = \frac{1}{c}\log\lvert\cos(cu+c_{1})\rvert + c_{2} \quad \text{and} \quad g(v) = -\frac{1}{c}\log\lvert\cos(cv+c_{3})\rvert + c_{4}$$ 
for some constants $c_{1}, c_{2}, c_{3},$ and $c_{4}$. Changing the parameters $(u,v)$ to $(u-c_{1},v-c_3)/c$ and performing the isotropic similarity $(x,y,z)\mapsto (\bluevar{c}x+c_{1}+c_3,\bluevar{c(}y-c_{2}-c_4\bluevar{),c}z+c_{1})$ we bring~\eqref{eq-part-surface} to form \eqref{eq-nonisotropic+nonisotropic}. Notice that there are no points $u_1,u_2\in\mathbb{R}$ with finite $\lim_{u\to u_k}f(u)$ and $\lim_{u\to u_k}1/\bluevar{\sqrt{2+f'(u)^2}}=0$; hence the above maximal intervals $(u_1,u_2)$ and $(v_1,v_2)$ coincide with the domains of $\alpha(u)$ and $\beta(v)$, and~\eqref{eq-part-surface} actually coincides with the whole given surface.

Now let us prove that for $a\neq -1$ equation~\eqref{noniso-noniso} has no solutions with $f''(u)g''(v) \ne 0$.
The equation is equivalent to
\begin{equation}\label{noniso-noniso2}
        (1+f'(u)^2)g''(v)+(1+g'(v)^2)f''(u)  \pm(a+1)\sqrt{\frac{f''(u)g''(v)}{a}}(f'(u)-g'(v)) = 0.
\end{equation}
We may assume that here we have a plus sign and $g''(v)>0$, otherwise replace $(f(u),g(v))$ by $\sign(g''(v))(f(\pm u),g(\pm v))$.

If $a=1$ then~\eqref{noniso-noniso2} is equivalent to $$\left(\sqrt{f''(u)/g''(v)}+ f'(u)\right)^2+\left(g'(v)\sqrt{f''(u)/g''(v)}- 1\right)^2 = 0.$$ 
Hence $f'(u)=-1/g'(v)$ is constant. Therefore $f''(u) = 0$, a contradiction.

Assume further $a\ne \pm 1$. 
For fixed $v$, a solution $f(u)$ of~\eqref{noniso-noniso2} gives a regular curve in the plane with the coordinates $$(X,Y) := \left(\sqrt{f''(u)/a}, f'(u)\right)$$ because $f''(u)\neq 0$. By~\eqref{noniso-noniso2}, the curve is contained in the conic 
       \begin{equation}\label{noniso-noniso1}
       \begin{split}
           a(1+g'(v)^2)X^2 + (a+1)\sqrt{g''(v)}XY+ &g''(v)Y^2-(a+1)\sqrt{g''(v)}g'(v)X+ g''(v)=0.
        \end{split}
       \end{equation}
The conic is irreducible because the determinant of its matrix is  $$-(a-1)^2(g'(v)^2+1)g''(v)^2/4\neq 0$$ for $a\neq 1$ (see~\cite[Section 2.4]{check}).  Since such irreducible conics~\eqref{noniso-noniso1} for distinct $v$ have a common curve, they actually do not depend on $v$. Thus the ratio of the coefficients at $X$ and $XY$ is constant.  Since $a\ne -1$, it follows that $g'(v) = \mathrm{const}$ and $g''(v) = 0$, a contradiction. Therefore there are no solutions for $a\ne -1$. 
\end{proof}

\begin{lemma} \label{l-case-iv}
    There are no admissible translational surfaces with constant nonzero ratio  of isotropic principal curvatures formed by an isotropic planar curve $\alpha$ and a nonplanar curve $\beta$.
\end{lemma}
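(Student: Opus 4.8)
The plan is to assume that such a surface exists and derive a contradiction by showing that $\beta$ is forced to be planar. First I would normalize the configuration: performing a rotation about a vertical axis, bring the isotropic plane of $\alpha$ to $x=0$, and, since admissibility forbids vertical tangents, parametrize $\alpha(u)=(0,u,f(u))$ and $\beta(v)=(b_1(v),b_2(v),b_3(v))$. The surface is then $r(u,v)=(b_1,\,u+b_2,\,f+b_3)$, and admissibility (non-isotropic tangent planes) forces the top-view Jacobian, which will turn out to be $b_1'$, to vanish nowhere.

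Next I would compute the isotropic curvatures. The decisive simplification is that for a translational surface $r_{uv}=0$, so the mixed coefficient of the isotropic second fundamental form vanishes identically. Writing the tangent plane as $z=px+qy+\mathrm{const}$, the slopes satisfy $z_u=px_u+qy_u$ and $z_v=px_v+qy_v$, giving $q=f'$ and $p=(b_3'-f'b_2')/b_1'$. The isotropic second fundamental form coefficients are $L=z_{uu}-px_{uu}-qy_{uu}=f''$, $M=z_{uv}-px_{uv}-qy_{uv}=0$, and $N=z_{vv}-px_{vv}-qy_{vv}=b_3''-pb_1''-qb_2''$; with the top-view metric $E=1$, $F=b_2'$, $G=b_1'^2+b_2'^2$ (so $EG-F^2=b_1'^2$) one gets, by this direct computation (or by specializing Strubecker's formulas as in Lemmas~\ref{l-case-i}--\ref{l-case-iii}), $K=f''N/b_1'^2$ and $H=(N+Gf'')/(2b_1'^2)$. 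The key structural fact is that $N$ is affine in $f'$: substituting $p$ yields $N=P(v)-f'(u)Q(v)$, where $P=b_3''-(b_1''/b_1')b_3'$ and $Q=b_2''-(b_1''/b_1')b_2'$ satisfy $Q\equiv0\iff b_2'/b_1'=\mathrm{const}$ and $P\equiv DQ\iff b_3'=Db_2'+Eb_1'$ for constants $D,E$.

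Then I would turn the CRPC equation $H^2/K=(a+1)^2/(4a)=:\mu$ into a separated form. Clearing denominators gives $(N+Gf'')^2=4\mu b_1'^2f''N$; since $K\ne0$ forces $f''\ne0$ and $N\ne0$, I would divide by $f''^2$ and set $W:=N/f''$ to obtain the quadratic $W^2+2(G-2\mu b_1'^2)W+G^2=0$, whose coefficients depend on $v$ only. The product of its two roots equals $G^2=(b_1'^2+b_2'^2)^2\ne0$, so both roots are nonzero; and since $W=(P-f'Q)/f''$ is continuous in $u$ and takes values in this two-element root set, it is locally constant in $u$, i.e.\ $W=W(v)$. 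Hence $P(v)-f'(u)Q(v)=f''(u)W(v)$ identically.

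The conclusion follows by differentiating this identity in $u$: $-f''Q=f'''W$, so $f'''/f''=-Q/W$ is simultaneously a function of $u$ alone and of $v$ alone, hence a constant $\kappa$. If $\kappa=0$, then $f''$ is constant and $Q\equiv0$, so $\beta$ lies in an isotropic plane; if $\kappa\ne0$, then $f''=Ae^{\kappa u}$, and substituting back into $P-f'Q=f''W$ together with $Q=-\kappa W$ gives $f''-\kappa f'=\mathrm{const}$, whence $P=DQ$ and $\beta$ lies in a non-isotropic plane. Either way $\beta$ is planar, contradicting the hypothesis. I expect the main obstacle to be the curvature computation producing the clean affine-in-$f'$ form of $N$, and the careful justification that the root $W$ is a genuine function of $v$ alone (the continuity/branch argument, using $W\ne0$); everything downstream is routine separation of variables.
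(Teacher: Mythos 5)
Your proof is correct and follows essentially the same route as the paper's: after the same normalization, you observe that the CRPC equation is a quadratic in $N/f''$ (the paper's $(h''-f'g'')/f''$) with coefficients depending only on $v$, conclude by continuity that this ratio is a function of $v$ alone, and then differentiate in $u$ to force $\beta$ to be planar. The remaining differences are cosmetic: you keep a general parametrization of $\beta$ and organize the endgame via the constant $f'''/f''=-Q/W$, whereas the paper parametrizes $\beta$ by its $x$-coordinate and splits on whether $f'''$ vanishes identically, but both reduce to the same planarity conclusion.
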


\begin{proof}
Assume the converse. Performing a rotation about a vertical axis, 
we can take the plane of $\alpha$ to the plane $x=0$. Since the surface is admissible, $\alpha$ cannot have vertical tangents. Thus it can be parameterized as $\alpha(u) = (0, u, f(u))$ for some function $f(u)$. Since the surface $r(u,v) = \alpha(u)+\beta(v)$ is admissible, $\beta$ cannot have tangents perpendicular to the $x$-axis. Thus it can be parameterized as $\beta(v) = (v, g(v), h(v))$ for some functions $g(v)$ and $h(v)$. 
Then the surface is
     \begin{equation}
             r(u,v) = (v, u+g(v), f(u)+h(v)).
     \end{equation}
Therefore the isotropic Gaussian and mean curvatures are (see \cite[Sections~4--5]{strubecker:1942})
    \begin{equation}
        K =-f''(u)(f'(u)g''(v)-h''(v)), 
    \end{equation}
      \begin{equation}
  H = \frac{f'(u)g''(v)-h''(v)-(1+g'(v)^2)f''(u)}{2}.  
    \end{equation}
Here  $f''(u)\neq 0$ 
because $\bluevar{K}\neq 0$. Thus the equation ${H^2}/{K} = (a+1)^2/(4a)$ is equivalent to
    \begin{equation}\label{iso-space}
        \left(1+g'(v)^2+\frac{h''(v)-f'(u)g''(v)}{f''(u)}\right)^2 = \frac{(a+1)^2}{a}\frac{(h''(v)-f'(u)g''(v))}{f''(u)}.
    \end{equation}
Here the right side (without the factor $(a+1)^2/a$) does not depend on $u$ because equation~\eqref{iso-space} is quadratic in it with the coefficients not depending on $u$. 
Differentiating the right side with respect to $u$, we get (see~\cite[Section 2.5]{check})
\begin{equation}\label{iso-space1}
    \frac{g''(v)(f'''(u)f'(u)-f''(u)^2)-h''(v)f'''(u)}{f''(u)^2} = 0.
\end{equation}
If there exists $u$ such that $f'''(u)\neq 0$, then  $h''(v) = \mathrm{const} \cdot g''(v)$, otherwise $g''(v)=0$ identically. In both cases $g'''(v)h''(v)-h'''(v)g''(v) = 0$ identically. Hence $\beta$ is a planar curve, a contradiction.
\end{proof}

\begin{lemma}\label{l-case-v}
There are no admissible translational surfaces with constant nonzero ratio  of isotropic principal curvatures formed by a non-isotropic planar curve $\alpha$ and a nonplanar curve $\beta$.
\end{lemma}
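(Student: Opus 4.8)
The plan is to assume for contradiction that such a surface exists and to argue as in Lemma~\ref{l-case-iii}, but with the conics appearing there replaced by higher-degree \emph{irreducible} plane curves. First I would normalize: by an isotropic similarity of the form $z\mapsto px+qy+rz$ and a rotation about a vertical axis I bring the non-isotropic plane of $\alpha$ to $z=0$; on an inclusion-maximal interval where $\alpha$ has no tangent perpendicular to the $x$-axis I write $\alpha(u)=(u,f(u),0)$, and on a suitable interval $\beta(v)=(v,g(v),h(v))$, so that $\beta$ being nonplanar means $g''h'''-g'''h''\not\equiv0$. The surface is $r(u,v)=(u+v,\,f(u)+g(v),\,h(v))$. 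Following \cite[Sections~4--5]{strubecker:1942} I would compute $K=PM/D^4$ and $H=\big((f'^2+1)M+(g'^2+1)P\big)/(2D^3)$, where $D:=f'-g'$, $P:=h'f''$, and $M:=h''(f'-g')+h'g''$; note that $D\ne0$ by admissibility and $P,M\ne0$ since $a\ne0$ forces $K\ne0$. The equation $H^2/K=(a+1)^2/(4a)$ then reads
\[
a\big((f'^2+1)M+(g'^2+1)P\big)^2=(a+1)^2D^2PM.
\]

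The decisive step is to read this, for each fixed $v$, as a polynomial relation $\Phi_v(\phi,\psi)=0$ in the pair $(\phi,\psi):=(f'(u),f''(u))$: writing $M=c_1\phi+c_0$ with $c_1:=h''$, $c_0:=h'g''-h''g'$, and $P=h'\psi$, the relation is quadratic in $\psi$ with a nonzero constant leading coefficient $aA^2$, where $A:=(g'^2+1)h'$. I would prove $\Phi_v$ irreducible by computing its $\psi$-discriminant as $(a+1)^2h'(\phi-g')^2(c_1\phi+c_0)^2\,Q(\phi)$, where $Q$ is a quadratic in $\phi$ whose own discriminant equals $16aA^2(a-1)^2\ne0$; hence $Q$ is not a perfect square, the $\psi$-discriminant is not a square in $\RR(\phi)$, and (the leading coefficient being constant) $\Phi_v$ does not factor in $\RR[\phi,\psi]$. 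As $u$ varies, $(f'(u),f''(u))$ traces a one-dimensional analytic arc $\Gamma$ lying on each irreducible curve $C_v=\{\Phi_v=0\}$; therefore every $C_v$ equals the Zariski closure of $\Gamma$, the curves coincide, and $\Phi_v=\lambda(v)\Phi_{v_0}$ for a scalar $\lambda(v)$.

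Comparing coefficients finishes the argument. The $\psi^2$-coefficient gives $\lambda(v)=\big(A(v)/A(v_0)\big)^2$, while the $\psi^0$-coefficient $a(\phi^2+1)^2(c_1\phi+c_0)^2$ forces $c_1\phi+c_0$ to be a $v$-independent multiple of its value at $v_0$. This yields two ordinary differential equations: $h''$ is proportional to $A=(g'^2+1)h'$, and the ratio $c_1/c_0=h''/(h'g''-h''g')$ is constant. Differentiating these and eliminating $g'''$ gives $g'''=h'''g''/h''$, that is $g''h'''-g'''h''\equiv0$, so $\beta$ is planar --- contradicting the hypothesis. I expect the main obstacle to be precisely this irreducibility-and-rigidity core: the discriminant computation showing that $\Phi_v$ genuinely fails to factor, and the deduction that the shared arc $\Gamma$ forces the defining polynomials to be proportional. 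Alongside it one must dispose of the degenerate configurations excluded by the generic argument --- where $f''$ is constant (so $\Gamma$ is not a genuine curve and $\alpha$ is a parabola), where $h'$, $c_0$, or $c_1(v_0)$ vanishes, and where the generating curves fail to be analytic --- each of which is handled separately and again leads to $\beta$ being planar.
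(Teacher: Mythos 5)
Your architecture is essentially the paper's: the same normalization and parametrization $r(u,v)=(u+v,\,f(u)+g(v),\,h(v))$, the same curvature formulas, and the same key device of reading the CRPC equation for fixed $v$ as an algebraic curve in the plane of $(f',f'')$, proving its irreducibility via the discriminant of the quadratic in $f''$, concluding that the curves for distinct $v$ coincide so that the defining polynomials are proportional, and extracting from the coefficient comparison the constancy of $c_1/c_0=h''/(h'g''-h''g')$, which forces $\beta$ planar. (The paper's finish is slightly more economical: from $p(h'g''-h''g')=qh''$ it integrates directly via $\left((pg'+q)/h'\right)'=0$ to $pg+qv+rh+s=0$, avoiding your Wronskian step and its division by $h''$; it also only needs this one relation, not your additional ODE $h''\propto(g'^2+1)h'$.)

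There is, however, a genuine gap in your irreducibility step at $a=\pm1$, values not excluded by the hypothesis $a\ne0$. Your asserted inequality $\mathrm{disc}(Q)=16aA^2(a-1)^2\ne0$ is simply false at $a=1$: there $Q=-4h'(g'\phi+1)^2$ is (up to the sign of $h'$) a perfect square, and your chain ``$\mathrm{disc}(Q)\ne0\Rightarrow$ the $\psi$-discriminant is not a square $\Rightarrow\Phi_v$ irreducible'' collapses. The paper handles exactly this point: if reducibility forced the last discriminant factor to be a square, then $a=1$; but then the discriminant is $\le 0$ with only finitely many zeros, so the quadratic in $f''$ has real solutions at only finitely many points and cannot contain the regular arc traced by $(f'(u),f''(u))$ --- a contradiction that simultaneously rescues irreducibility and disposes of $a=1$. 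Separately, at $a=-1$ the entire $\psi$-discriminant carries the prefactor $(a+1)^2$ and vanishes identically, so $\Phi_v$ \emph{is} a perfect square (reducible as a polynomial), contrary to your claim; the paper treats this by noting the polynomial is the square of a polynomial linear in $f''$ with constant nonzero leading coefficient, whose zero set is irreducible, which suffices for the proportionality argument. Finally, your list of degenerate cases is partly misplaced: the arc $u\mapsto(f'(u),f''(u))$ is automatically regular since its first component has derivative $f''\ne0$ (so ``$f''$ constant'' is not a degeneration), and $h'\ne0$ is already forced by $K\ne0$; the degeneration you do need, and which the paper checks, is $L\not\equiv0$, i.e.\ $(c_1,c_0)\ne(0,0)$, without which $\Phi_v$ degenerates to $aA^2\psi^2$ and the whole discriminant discussion is vacuous. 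These defects are repairable within your approach --- indeed the paper repairs them --- but as written the proof fails for $a=\pm1$.
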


\begin{proof}
Assume the converse. Performing an isotropic similarity of the form $z\mapsto px+qy+z$ we can take the plane of $\alpha$ to the plane  $z=0$. Performing an appropriate rotation with a vertical axis and restricting to sufficiently small parts of our curves,
we may assume that the tangents to $\alpha$ and $\beta$ are not perpendicular to the $x$-axis at each point. Then the curves can be parameterized as $\alpha(u) = (u, f(u), 0)$ and $\beta(v) = (v, g(v), h(v))$ for some smooth functions $f(u)$, $g(v)$, and $h(v)$. 
Therefore a part of our surface can be parameterized as
     \begin{equation}
             r(u,v) = (u+v, f(u)+g(v), h(v)).
     \end{equation}
Thus the isotropic Gaussian and mean curvatures are (see \cite[Sections~4--5]{strubecker:1942} and \cite[Eq. ~(6.3), ~(6.8)]{Aydin2020})
    \begin{equation}
        K = \frac{f''(u)h'(v)\left(g''(v)h'(v)-h''(v)(g'(v)-f'(u))\right)}{(f'(u)-g'(v))^4}, 
        \end{equation}
    \begin{equation}
   H = -\frac{(1+g'(v)^2)f''(u)h'(v)+(1+f'(u)^2)\left(g''(v)h'(v)-h''(v)(g'(v)-f'(u))\right)}{2\lvert f'(u)-g'(v)\rvert^3}. \end{equation}
Here  $f''(u),h'(v),f'(u)-g'(v) \neq 0$ because $\bluevar{K}\ne 0$. 
Hence the equation ${H^2}/{K} = (a+1)^2/(4a)$ is equivalent to
  $$ a\left((g'(v)^2+1)f''(u)+(f'(u)^2+1) \left(g''(v)-\frac{h''(v)}{h'(v)}(g'(v)-f'(u))\right)\right)^2-$$
   \begin{equation}\label{noniso-space}
    -(a+1)^2  (f'(u)-g'(v))^2 f''(u)\left(g''(v)-\frac{h''(v)}{h'(v)}(g'(v)-f'(u))\right)=0.
    \end{equation}
 
 Fix a value of $v$. A solution $f(u)$ of~\eqref{noniso-space} gives a regular curve in the plane with the coordinates $(X,Y) := \left(f''(u), f'(u)\right)$ 
 because $f''(u)\neq 0$. The curve is disjoint with the line $X=0$. By~\eqref{noniso-space}, the curve is contained in the algebraic curve
\begin{equation}\label{noniso-space1}
a\left((g'(v)^2+1)X +(Y^2+1) L(Y)\right)^2-(a+1)^2 X (Y-g'(v))^2L(Y)=0,
\end{equation}
where $$L(Y) := g''(v)-h''(v) (g'(v)-Y)/h'(v).$$ The expression $L(Y)$ is not a zero polynomial in $Y$, because otherwise~\eqref{noniso-space1} reduces to $X=0$, whereas the above regular curve is disjoint with the line $X=0$.


Let us prove that the algebraic curve~\eqref{noniso-space1} is irreducible
(where an irreducible curve of multiplicity two is also viewed as irreducible). Indeed, otherwise the left side equals $$a(g'(v)^2+1)^2(X-P_{1}(Y))(X-P_{2}(Y))$$ for some complex polynomials $P_{1}(Y)$ and  $P_{2}(Y)$. Consider~\eqref{noniso-space1} as a quadratic equation in~$X$. Then its discriminant $D(Y)=a^2(g'(v)^2+1)^4(P_{1}(Y)-P_{2}(Y))^2$ is the square of a  polynomial in $Y$. 
A direct computation gives (see ~\cite[Section ~2.6]{check})
\begin{multline}\label{discriminant}
D(Y)=(a+1)^2 (Y-g'(v))^2L(Y)^2 \cdot\\
\cdot\left((a-1)^2 g'(v)^2-4 a-2 (a+1)^2 g'(v)Y+\left((a-1)^2-4 a g'(v)^2\right) Y^2\right).
 \end{multline}
Assume $a\ne -1$, otherwise the left side of~\eqref{noniso-space1} is the square of a linear in $X$, hence irreducible, polynomial. All factors of $D(Y)$ except the last one are complete squares and not zero polynomials in $Y$. Hence the last factor, which is at most quadratic in $Y$, is a square of a polynomial in $Y$. Hence its discriminant (see~\cite[Section 2.7]{check}) $16 (a-1)^2 a (g'(v)^2+1)^2$  vanishes. Since  $a\ne 0$, we get $a= 1$. Then $D(Y) \le 0$ with the equality only for a finite number of real values of $Y$. Therefore~\eqref{noniso-space1} has only a finite number of real points $(X,Y)$ and cannot contain a regular curve. This contradiction proves that~\eqref{noniso-space1} is irreducible.


Since irreducible curves~\eqref{noniso-space1} for distinct $v$ contain the same regular curve, they must all coincide. Thus the ratio of the free term and the coefficient at $Y$ in~\eqref{noniso-space1} is constant \bluevar{(including the case when one of the coefficients or both vanishes)}. Hence the ratio of the two coefficients of the \bluevar{linear} polynomial 
$L(Y)$ is constant. Thus $p(h'(v)g''(v)-h''(v) g'(v))-qh''(v)=0$ for some constants $p$ and $q$ not vanishing simultaneously. Therefore $((pg'(v)+q)/h'(v))' = 0$, hence $pg'(v)+q=r h'(v)$ and 
$pg(v)+qv+rh(v)+s=0$ for some constants $r$ and~$s$. Thus the curve $\beta(v)=(v,g(v),h(v))$ is planar, a contradiction.
\end{proof}

\section{Dual-translational surfaces}

\subsection{Isotropic metric duality} 

The principle of duality is a crucial concept in projective geometry. For example, in projective 3-space, points are dual to planes and vice versa, straight lines are dual to straight lines, and inclusions are reversed by the duality.

In contrast to Euclidean geometry, isotropic geometry possesses a \emph{metric duality}. It is defined as the polarity with respect to the unit isotropic sphere, which maps a point $P=\left(p_1, p_2, p_3\right)$ to the non-isotropic plane $P^*$ with the equation $z=p_1 x+p_2 y-p_3$, and vice versa. For two 
points $P$ and $Q$ 
at isotropic distance $d$, the dual planes $P^*$ and $Q^*$ intersect at the isotropic angle  $d$. 
The latter is defined as the difference between the slopes of the two lines obtained in a section of $P^*$ and $Q^*$ by an isotropic plane orthogonal to the line $P^*\cap Q^*$.


The following properties of the metric duality are straightforward. Parallel points, defined as points having the same top view, are dual to parallel planes. Two non-parallel lines in a non-isotropic plane are dual to two non-parallel lines in a non-isotropic plane. Two parallel lines in a non-isotropic plane are dual to two non-parallel lines in an isotropic plane.

The \emph{dual} $\Phi^*$ of an admissible surface $\Phi$ is the set of points dual to the tangent planes of $\Phi$. If $\Phi$ is the graph of a smooth function $f$, then the tangent plane at the point $(x_0, y_0, f(x_0, y_0))$ is $$z = xf_x(x_0, y_0) + yf_y(x_0, y_0) -(x_0f_x(x_0, y_0) + y_0f_y(x_0, y_0)-f(x_0, y_0)). $$ 
Hence $\Phi^*$ is parameterized by 
\begin{equation}
x^*(x,y)=f_x(x, y),\quad  y^*(x,y)=f_y(x, y), \quad z^*(x,y)=x f_x+y f_y-f .
\end{equation}
If $\Phi$ has parametric form $ (x(u,v), y(u,v), z(u,v))$, then  $\Phi^*$ is parameterized by
\begin{equation}\label{parametric:form}
x^*(u,v) =\frac{y_u z_v-y_v z_u}{x_v y_u-x_u y_v},\quad  y^*(u,v)=\frac{x_u z_v-x_v z_u}{x_u y_v-x_v y_u}, \quad z^*(u,v)=x x^*+y y^*-z.
\end{equation}
 
It is important to note that  $\Phi^*$ may have singularities that correspond to parabolic points of $\Phi$, where $K=0$, and doubly-tangent planes. This duality relationship is reflected in the following expressions that relate the isotropic curvatures of dual surfaces, as shown in \cite{Strubecker78}: $H^*=H/K$ and $K^*=1/K$.

Thus the dual of an isotropic CRPC surface is again an isotropic CRPC \bluevar{surface} because  $(H^*)^2/ K^* = H^2/K$. The classes of rotational, parabolic rotational, ruled, and helical CRPC surfaces are clearly invariant under the duality. Each surface~\eqref{eq-logarithmoid}, \eqref{eq-helicoid}, \eqref{eq-spiral}, \eqref{helicalane-1}, and~\eqref{helical-1} is isotropic similar to its dual. 
Two surfaces~\eqref{eq-rotational} are isotropic similar to the duals of each other. 

More properties of the metric duality can be found in \cite{sachs}. 

\subsection{Dual-translational isotropic CRPC surfaces}

For the translational surfaces, the duality leads to a new type of surfaces: the ones with a conjugate net of isotropic geodesics. A curve on a surface is an \emph{isotropic geodesic} if its top view is a straight line segment. Two families of curves form a \emph{conjugate net} if any two curves from distinct families intersect and their directions at the intersection point are conjugate (see the definition in Section~\ref{sec-preliminaries}).

\begin{prop}\label{thm-two-planar-dual} (See Fig.~\ref{fig:dualtranslation})
The dual surfaces of~\eqref{eq-isotropic+nonisotropic} and~\eqref{eq-nonisotropic+nonisotropic} are up to \bluevar{general} isotropic similarity 
respectively
\begin{align}\label{eq-isotropic+nonisotropic-dual}
    r_b^{*}(u,v) 
    &= \exp u 
         \begin{bmatrix}
          \frac{\cos v}{ b-\sin v}\\
             1\\
           \frac{b-b^3+v \cos v }{\left(b^2-1\right) (b-\sin v)}-\log\left\lvert b-\sin v\right\rvert+u
         \end{bmatrix},\\
    \label{eq-nonisotropic+nonisotropic-dual}
    r^{*}(u,v) 
    &= \frac{1}{\tan u+\tan v} 
    \begin{bmatrix}
          \tan v \\
           1\\
          \log \lvert\frac{\cos v}{\cos u}\rvert-u \tan u+v \tan v
         \end{bmatrix}.
\end{align}
They have a constant ratio (equal to $(b-1)/(b+1)$ and $-1$ respectively) of isotropic principal curvatures and
possess a conjugate net of isotropic geodesics. \bluevar{The domains of maps~\eqref{eq-isotropic+nonisotropic-dual} and~\eqref{eq-nonisotropic+nonisotropic-dual} are subsets of the domains of~\eqref{eq-isotropic+nonisotropic} and~\eqref{eq-nonisotropic+nonisotropic}, where the maps are injective; see Theorem~\ref{thm-two-planar}.}
\end{prop}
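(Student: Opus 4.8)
The plan is to prove the three assertions—the explicit form of the duals, the CRPC property with the stated ratio, and the existence of a conjugate net of isotropic geodesics—by combining the parametric duality formula~\eqref{parametric:form}, the curvature relations $H^{*}=H/K$ and $K^{*}=1/K$, and the translational structure of~\eqref{eq-isotropic+nonisotropic} and~\eqref{eq-nonisotropic+nonisotropic} guaranteed by Theorem~\ref{thm-two-planar}.

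First I would compute the duals directly. Writing each surface in the parametric form $(x(u,v),y(u,v),z(u,v))$ and substituting the partial derivatives into~\eqref{parametric:form} produces $x^{*},y^{*},z^{*}$, which I then simplify. For~\eqref{eq-nonisotropic+nonisotropic} the partials are elementary ($x_u=x_v=1$, $y_u=-\tan u$, $y_v=\tan v$, $z_u=1$, $z_v=0$), and a short calculation gives $x^{*}=\tan v/(\tan u+\tan v)$, $y^{*}=-1/(\tan u+\tan v)$, and $z^{*}=(\log\lvert\cos v/\cos u\rvert-u\tan u+v\tan v)/(\tan u+\tan v)$; after the reparametrization $(u,v)\mapsto(-u,-v)$ and the isotropic similarity $z\mapsto -z$ this is exactly~\eqref{eq-nonisotropic+nonisotropic-dual}. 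The computation for~\eqref{eq-isotropic+nonisotropic} is of the same nature but heavier, and I expect the bookkeeping—carrying the $b$-dependent trigonometric expressions through~\eqref{parametric:form} and absorbing the constants into an explicit isotropic similarity—to be the main obstacle; it is a direct but lengthy manipulation that I would verify by computer algebra in the style of~\cite{check}.

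The CRPC property and the value of the ratio then come for free. Since~\eqref{eq-isotropic+nonisotropic} and~\eqref{eq-nonisotropic+nonisotropic} are CRPC of ratio $a$, they satisfy $H^{2}/K=(a+1)^{2}/(4a)$, and from $H^{*}=H/K$, $K^{*}=1/K$ we get $(H^{*})^{2}/K^{*}=H^{2}/K$, so each dual is again CRPC. Moreover $\kappa_1^{*}+\kappa_2^{*}=2H/K=1/\kappa_1+1/\kappa_2$ and $\kappa_1^{*}\kappa_2^{*}=1/K=1/(\kappa_1\kappa_2)$, whence $\{\kappa_1^{*},\kappa_2^{*}\}=\{1/\kappa_1,1/\kappa_2\}$ and the ratio of the dual is $1/a$. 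A one-line check gives $1/a=(b-1)/(b+1)$ for $b=(a+1)/(a-1)$, and $1/a=-1$ when $a=-1$, matching the claim.

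It remains to exhibit the conjugate net of isotropic geodesics, which is the conceptual heart of the statement. Since~\eqref{eq-isotropic+nonisotropic} and~\eqref{eq-nonisotropic+nonisotropic} are translational surfaces $r(u,v)=\alpha(u)+\beta(v)$, their isolines form a conjugate net, as already noted for translational surfaces in Section~\ref{sec-translational} (indeed $r_{uv}=0$, so the two isoline directions are conjugate). Along an isoline $v=\mathrm{const}$ every tangent plane is spanned by $\alpha'(u)$ and the fixed vector $d=\beta'(v)=(d_1,d_2,d_3)$, so all these planes contain the direction $d$. A plane $z=x^{*}x+y^{*}y-z^{*}$ is parallel to $d$ precisely when $d_1x^{*}+d_2y^{*}=d_3$; hence the top views of the dual points of these tangent planes lie on one fixed line, and the image of the isoline under the metric duality is an isotropic geodesic. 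The same holds for $u=\mathrm{const}$ with $d=\alpha'(u)$. Finally, the metric duality is the polarity with respect to the isotropic unit sphere, hence a projective map that preserves conjugacy, so it carries the conjugate net of isolines to a conjugate net on the dual; alternatively one verifies directly from~\eqref{eq-isotropic+nonisotropic-dual}--\eqref{eq-nonisotropic+nonisotropic-dual} that the mixed derivative $r^{*}_{uv}$ is tangent to the surface, i.e.\ the parameter net is conjugate. Together these give a conjugate net of isotropic geodesics on each dual surface.
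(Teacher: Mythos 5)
Your proposal is correct and follows essentially the same route as the paper: the explicit duals come from a direct computation with~\eqref{parametric:form} (with the heavy case deferred to computer algebra, just as the paper defers to \cite[Section~4]{check}), the CRPC property follows from $H^*=H/K$ and $K^*=1/K$, and the conjugate net of isotropic geodesics is obtained from the translational structure plus the projectivity of the polarity — your coordinate condition $d_1x^*+d_2y^*=d_3$ is exactly the analytic form of the paper's observation that tangent planes parallel to a fixed line dualize into points of a single isotropic plane, whose section of the dual surface is an isotropic geodesic. Your explicit checks that the dual ratio $1/a$ equals $(b-1)/(b+1)$ and that the $a=-1$ dual matches~\eqref{eq-nonisotropic+nonisotropic-dual} after $(u,v)\mapsto(-u,-v)$ and the isotropic similarity $z\mapsto -z$ are correct minor additions the paper leaves implicit.
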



\begin{figure}[htbp]
\hfill
\begin{overpic}[width=0.34\textwidth]{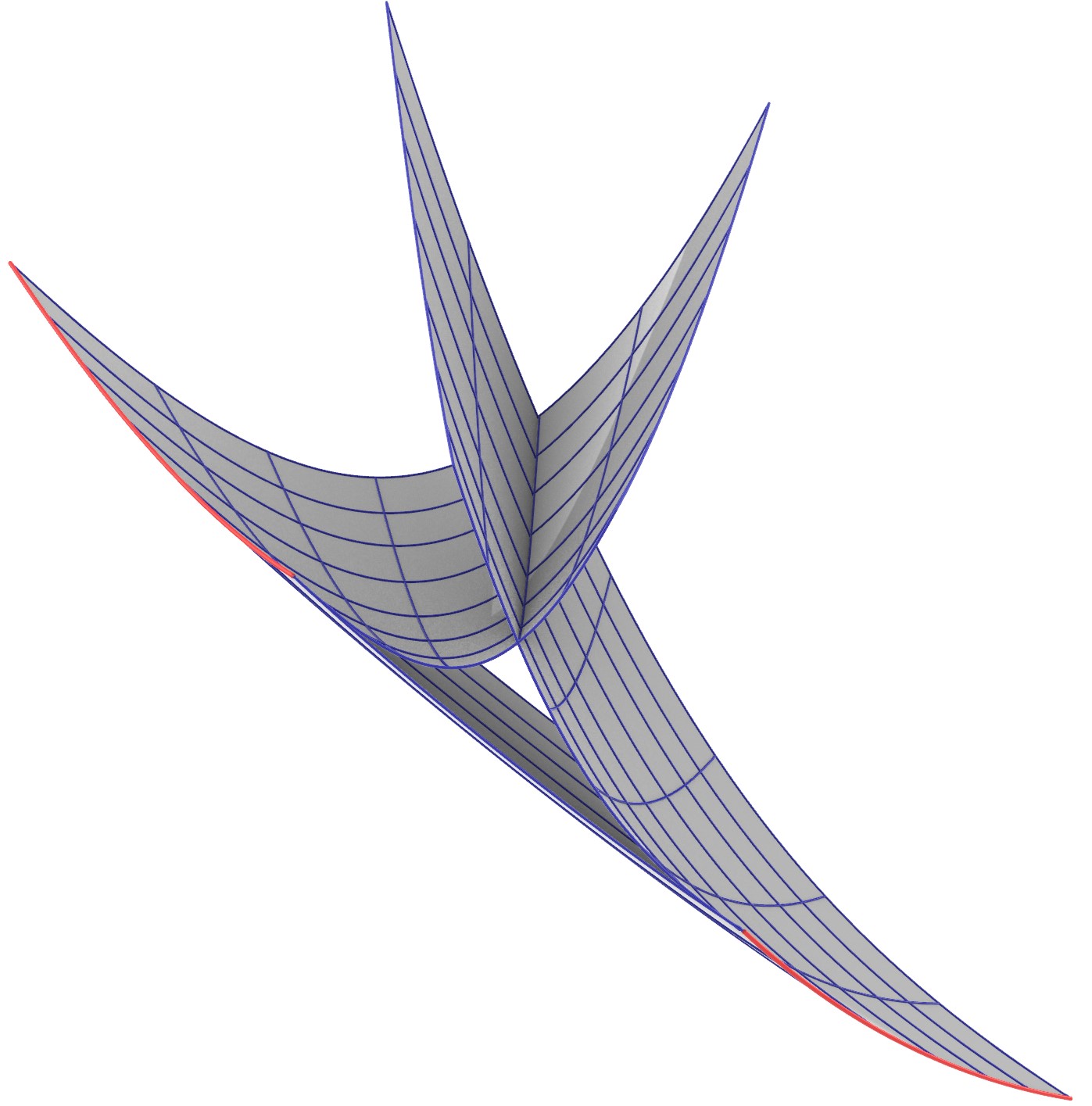}
\end{overpic}
\hfill
\begin{overpic}[width=0.27\textwidth]{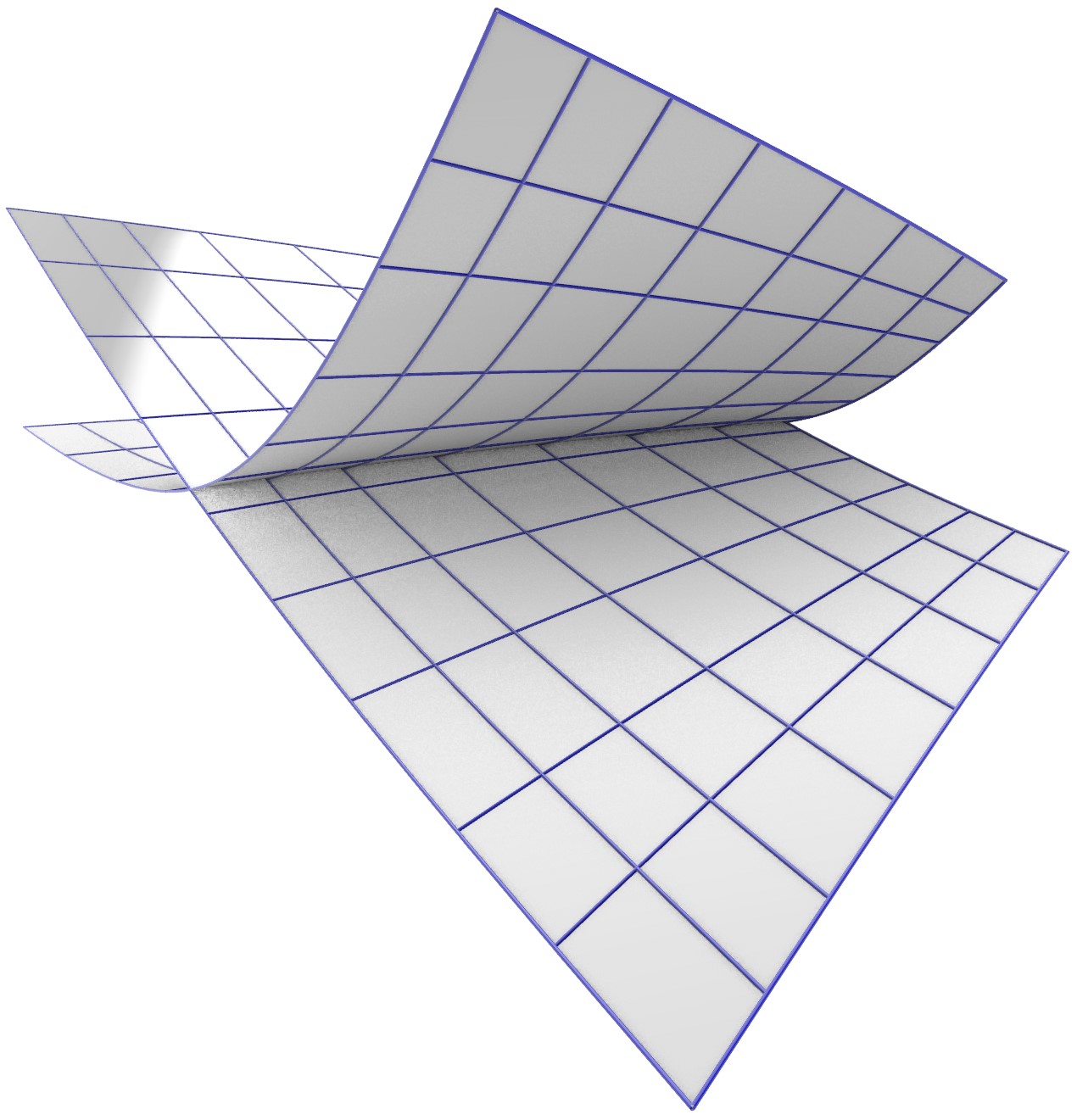}
\end{overpic}
\hfill
\begin{overpic}[width=0.37\textwidth]{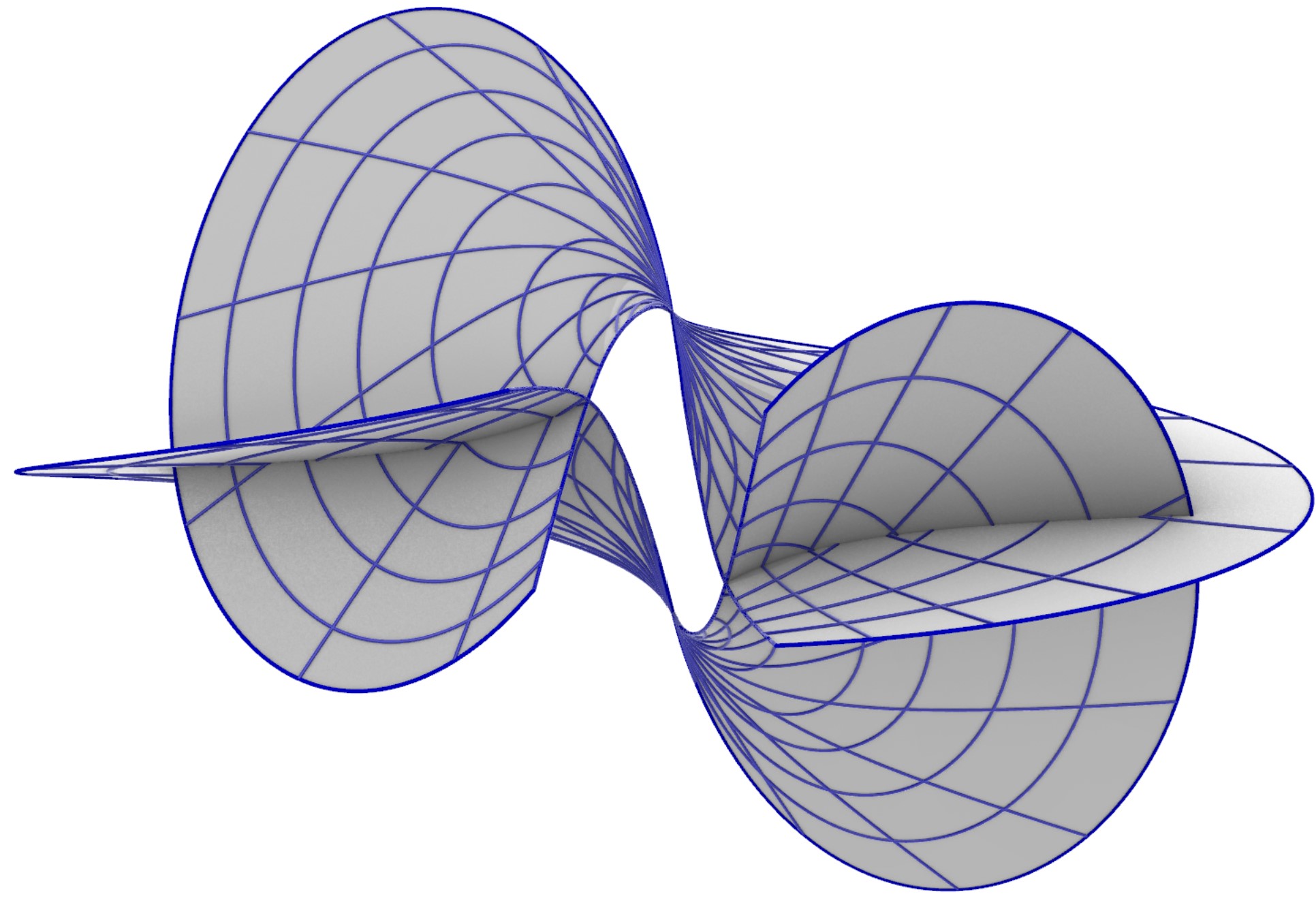}
\end{overpic}
\hfill{}
\caption{Duals of the translational isotropic CRPC surfaces (from the left to the right):
surfaces~\eqref{eq-isotropic+nonisotropic-dual} for $a>0$ and $a<0$; surface~\eqref{eq-nonisotropic+nonisotropic-dual}. The red curves are the singular curves  $b\sin v = 1$  of surface~\eqref{eq-isotropic+nonisotropic-dual} with $a >0$. 
}
\label{fig:dualtranslation}
\end{figure}


The proposition is proved by direct calculation with the help of~\eqref{parametric:form} (see  \cite[Section~4]{check}). 
We still have to show that duals to translational surfaces possess a conjugate net of isotropic geodesics.  
At all points of a curve $u=\mathrm{const}$ on a translational surface, the tangents to the curves $v=\mathrm{const}$ are parallel
and form a general cylinder.
Hence all tangent planes along the curve $u=\mathrm{const}$ are parallel to one line. Then the duals of those planes form a section of the dual surface by an isotropic plane, which is an isotropic
geodesic. The same is true for the tangent planes along each curve $v=\mathrm{const}$. Since curves $u=\mathrm{const}$ and $v=\mathrm{const}$ form a conjugate net on the translational
surface and any projective duality maps conjugate
tangents to conjugate tangents, the metric dual to a translational
surface possesses a conjugate net of geodesics. 

\begin{remark}
 Surfaces with a conjugate net of Euclidean geodesics have been determined by A.~Voss \cite{voss1888}. The conjugate
net of geodesics is remarkably preserved by a one-parameter
family of isometric deformations. The conjugate nets
of geodesics are reciprocal parallel to the asymptotic nets
of surfaces with constant negative Gaussian curvature. Discrete
versions of Voss nets are quad meshes with planar faces that are flexible when the faces are rigid and the
edges act as hinges. We refer the reader to R.~Sauer \cite{sauer-1970}.
Analogous properties hold for the isotropic counterparts of Voss surfaces if one defines
an isometric deformation in isotropic space as one which
preserves the top view and isotropic Gauss curvature~\cite{isometric-isotropic}. We
will report on this and related topics in a separate
publication.   
\end{remark}

\section{Open problems}

Following the general philosophy discussed in Section~\ref{sec-intro}, one can try to apply the methods developed for the classification of helical and translational isotropic CRPC surfaces in Sections~\ref{sec-helical}--\ref{sec-translational} to their analogs in Euclidean geometry; cf.~\cite{helicalcrpc}. 
The case of translational surfaces generated by two spatial curves remains open in both geometries. 

It is natural to extend the search for CRPC surfaces to other Cayley--Klein geometries \bluevar{such as Galilean or Minkowski geometry, and Cayley--Klein vector spaces \cite{giering,S05}}. 
The transition from Euclidean to pseudo-Euclidean (Minkowski) geometry is not expected to lead to 
significant differences, but more case distinctions. It is relative differential geometry with respect 
to a hyperboloid. We may return to this topic in future research if it appears to be rewarding.

\appendix

\section{\bluevar{The } singularities in the top view}\label{sec-appendix}

For the classification of ruled CRPC surfaces (see Section~\ref{sec-ruled}), we need the following two well-known lemmas from singularity theory, which we could not find a good reference for.


\begin{lemma} \label{l-envelope} If $R_t$ is an analytic family of lines in the plane, then for all $t$ in some interval $I$ the lines $R_t$ satisfy one of the following conditions
\begin{itemize}
    \item[(i)] they have a common point;
    \item[(ii)] they are parallel;
    \item[(iii)] they \bluevar{are tangent to} one regular curve (\emph{envelope}) 
    forming a part of the boundary of the union $\bigcup_{t\in I}R_t$.
\end{itemize}
\end{lemma}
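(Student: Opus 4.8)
The plan is to put each line into Hesse normal form and reduce the trichotomy to the vanishing of two analytic functions, letting analyticity handle the bookkeeping. Since the family is analytic, each line has an equation $a(t)x+b(t)y+c(t)=0$ with analytic coefficients and $(a(t),b(t))\ne 0$; dividing by $\sqrt{a(t)^2+b(t)^2}$ and choosing an analytic branch of the argument on a small interval, I can write $R_t$ as $\cos\theta(t)\,x+\sin\theta(t)\,y=p(t)$ with $\theta,p$ analytic, where $\theta(t)$ is the direction of the normal and $(-\sin\theta,\cos\theta)$ is the direction of the line itself.

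First I would analyze $\theta'(t)$. If $\theta'\equiv 0$ on the interval, then all normals, hence all lines, are parallel, which is case (ii). Otherwise $\theta'$ is analytic and not identically zero, so its zeros are isolated and I may restrict to a subinterval $I$ on which $\theta'\ne 0$. There the envelope system $F=0,\ F_t=0$ for $F=\cos\theta\,x+\sin\theta\,y-p$ has the unique solution $E(t)=(p\cos\theta-q\sin\theta,\ p\sin\theta+q\cos\theta)$ with $q:=p'/\theta'$. Using $p'=q\theta'$, a one-line computation gives $E'(t)=w(t)\,(-\sin\theta,\cos\theta)$, where $w:=p\theta'+q'$; in particular $E(t)$ always lies on $R_t$, and $E'$ is parallel to $R_t$.

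Next I would split on $w$. If $w\equiv 0$, then $E$ is constant, and since $E(t)\in R_t$ for every $t$, all lines pass through this fixed point, which is case (i). If $w\not\equiv 0$, then restricting $I$ once more to where $w\ne 0$ makes $E'\ne 0$, so $E$ is a regular curve tangent to every $R_t$, namely the envelope of case (iii). To see that it bounds the union, I would parameterize the swept region by $\Psi(s,t)=E(t)+s(-\sin\theta,\cos\theta)$ and compute its Jacobian, which simplifies to $s\theta'$. Since $\theta'\ne 0$, this vanishes exactly along $s=0$ (the envelope) and changes sign across it, so $\Psi$ is a fold: near each envelope point the lines cover only one side. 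Hence $E(t)$ lies on the boundary of $\bigcup_{t\in I}R_t$.

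I expect the final boundary claim to be the only delicate point; the trichotomy itself is routine once the normal form is fixed and analyticity is used to pass to subintervals on which the two analytic functions $\theta'$ and $w$ are nonvanishing. Everything else reduces to the two elementary computations of $E'$ and of the Jacobian of $\Psi$, both of which collapse neatly because of the relation $p'=q\theta'$.
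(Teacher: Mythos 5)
Your route is genuinely different from the paper's. The paper works with the non-normalized equation $L(x,y,t)=y+a(t)x+b(t)$, stratifies $\mathbb{R}^2\times I$ by the order of vanishing of the $t$-derivatives of $L$ (sets $\Sigma_n$ and $\Sigma_\infty$), gets case (i) from $\Sigma_\infty\neq\emptyset$, case (ii) when all strata are empty, and in case (iii) produces the envelope via the implicit function theorem applied to $G=(L,\partial_t^nL)$; it then must separately rule out that the envelope is a straight line and pass to a non-inflection point to get the boundary claim. You instead normalize to Hesse form, solve the envelope system explicitly, and run the trichotomy on the two analytic functions $\theta'$ and $w=p\theta'+q'$. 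This is cleaner in two ways: the case analysis is a transparent nested dichotomy, and on the subinterval where $\theta'\neq 0$ and $w\neq 0$ the envelope automatically has nonvanishing curvature (its curvature is $\theta'/\lvert w\rvert$), so the inflection-point discussion that the paper needs disappears. Your computations of $E'=w(-\sin\theta,\cos\theta)$ and of the Jacobian $s\theta'$ of $\Psi$ are correct.

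The one genuine gap is the inference ``the Jacobian vanishes exactly on $s=0$ and changes sign across it, so $\Psi$ is a fold.'' Sign change of the Jacobian across a regular curve is \emph{not} sufficient for a fold: the Whitney cusp map $(x,y)\mapsto(x,\,y^3-xy)$ has Jacobian $3y^2-x$, which vanishes exactly on the regular parabola $x=3y^2$ and changes sign across it, yet the image covers a full neighborhood of the cusp point, so no one-sidedness holds. (This is not an exotic pathology here: it is exactly what happens for a family of lines at a point where $w=0$, e.g.\ the normal lines of a parabola near a cusp of their evolute.) What saves your argument is the hypothesis $w\neq 0$, which you have but do not use at this step: it makes $\Psi|_{\{s=0\}}=E$ an immersion, hence forces $\ker d\Psi$ (spanned by $(-w,1)$) to be transverse to $\{s=0\}$, and together with $dJ=\theta'\,ds\neq 0$ this verifies Whitney's fold criterion, whose normal form $(x,y)\mapsto(x,y^2)$ gives the one-sidedness. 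Alternatively, avoid fold theory entirely: since the curvature $\theta'/\lvert w\rvert$ of $E$ is nonzero, write $E$ locally as a strictly convex graph $y=h(x)$; then any point of the tangent line at $x_0$ satisfies $y-h(x)=-\bigl(h(x)-h(x_0)-h'(x_0)(x-x_0)\bigr)\le 0$, so all tangent lines lie weakly on one side of the arc, which is the boundary claim. Finally, note a small locality point in either version: a point near $E(t_0)$ could a priori lie on a line $R_t$ far from its tangency point (large $\lvert s\rvert$), so one should shrink $I$ so that $\lvert E(t)-E(t_0)\rvert$ stays small and such points remain at definite distance from $E(t_0)$; the paper's proof makes the same kind of ``for a sufficiently small $I$'' restriction.
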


\begin{proof} 
Assume without loss of generality that all $R_t$ are not parallel to the $y$ axis
for $t$ in some interval $I_1$.
Let $L(x,y,t):=y+a(t)x+b(t)=0$ be the equation of the line $R_t$. 
For each $n=1,2,\dots$ consider the subset $\Sigma_n$ of $\mathbb{R}^2\times I_1$ given by
$L(x,y,t)=\tfrac{\partial}{\partial t}L(x,y,t)=\dots=\tfrac{\partial^n}{\partial t^n}L(x,y,t)=0$ but $\tfrac{\partial^{n+1}}{\partial t^{n+1}}L(x,y,t)\ne 0$, and also the subset $\Sigma_\infty$ given by $\tfrac{\partial^n}{\partial t^n}L(x,y,t)=0$ for all $n=0,1,\dots $ . 
By the analyticity, we may restrict to a subsegment $I_2\subset I_1$ such that 
no connected component of $\Sigma_n$ is contained in a plane of the form $\mathbb{R}^2\times \{t\}$. 
Consider the following 3 cases.

Case (i): $\Sigma_\infty\ne\emptyset$. Then take a point $(x,y,t)\in\Sigma_\infty$. By the analyticity $L(x,y,t)=0$ for all $t$. Hence $(x,y)$ is a point common to all $R_t$. 

Case (ii): $\Sigma_\infty=\Sigma_n=\emptyset$ for all $n$. Then the system $L(x,y,t)=\tfrac{\partial}{\partial t}L(x,y,t)=0$ has no solutions, i.e., $a'(t)=0$ and $b'(t)\ne 0$ everywhere. Hence $a(t)=\mathrm{const}$, $b(t)$ is monotone, and all $R_t$ are parallel.

Case (iii): $\Sigma_n\ne\emptyset$ for some $n$. Then take a point $(x,y,t)\in\Sigma_n$ and consider the map $G(x,y,t):= \left(L(x,y,t),\tfrac{\partial^n}{\partial t^n}L(x,y,t)\right)$; this generalizes the argument from~\cite[Section~5.21]{bruce-giblin:1984}, where $n=1$. 
Let us show that the top view (projection to the $xy$-plane along the $t$-axis) of $G^{-1}(0,0)$ is the required curve (envelope). Since $\tfrac{\partial L}{\partial t}=0$, $\tfrac{\partial^{n+1} L}{\partial t^{n+1}}\ne 0$, and $\tfrac{\partial L}{\partial y}\ne 0$, it follows that the differential $dG$ is surjective. Then by the Implicit Function Theorem, the intersection of $G^{-1}(0,0)$ with a neighborhood of $(x,y,t)$ is a regular analytic curve with the tangential direction $(dx,dy,dt)$  given by $$\tfrac{\partial L}{\partial t}dt+\tfrac{\partial L}{\partial x}dx+\tfrac{\partial L}{\partial y}dy=\tfrac{\partial^{n+1} L}{\partial t^{n+1}}dt+\tfrac{\partial^{n+1} L}{\partial t^{n}\partial x}dx+\tfrac{\partial^{n+1} L}{\partial t^{n}\partial y}dy=0;$$ cf.~\cite[Proof of Proposition~5.25]{bruce-giblin:1984}. Since $\tfrac{\partial L}{\partial t}=0$ and $\tfrac{\partial^{n+1}L}{\partial t^{n+1}}\ne 0$, it follows that the top view  of the curve is a regular analytic curve tangent to $L_t$. Since no component of $\Sigma_n$ is contained in the plane $\mathbb{R}^2\times \{t\}$, it follows that the top view is tangent to $L_t$ for each  $t$ 
in a sufficiently small interval $I_3\subset I_2$.

The resulting envelope cannot be a straight line, otherwise, we have case (i). Then it has a non-inflection point. Then for a sufficiently small $I\subset I_3$, a part of the curve is contained in the boundary of the union $\bigcup_{t\in I}R_t$.
\end{proof}


\begin{lemma} \label{l-tangents} Assume that an analytic curve in $\mathbb{R}^3$
has a vertical tangent at a point $O$ and does not coincide with the tangent.
Then the limit of the top view of the tangent at a point $P$ tending to $O$ coincides with the top view of the limit of the osculating plane at a point $P$  tending to $O$. In particular, both limits exist.
\end{lemma}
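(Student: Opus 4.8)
The plan is to reduce everything to the first nonzero Taylor coefficients of the top-view coordinates. Take a regular analytic parametrization $\gamma(s)=(x(s),y(s),z(s))$ with $\gamma(0)=O$ placed at the origin; a vertical tangent at $O$ means $x'(0)=y'(0)=0$ and $z'(0)\ne 0$, while the assumption that the curve does not coincide with its tangent (the $z$-axis) gives $(x(s),y(s))\not\equiv(0,0)$. Let $m,n\in\{2,3,\dots,\infty\}$ be the orders of vanishing of $x$ and $y$ at $s=0$, with leading coefficients $a_m,b_n$; both exceed $1$ since $x'(0)=y'(0)=0$, and at least one is finite. First I would identify the limit of the top view of the tangent: it is the line through $(x(s),y(s))$ with direction $(x'(s),y'(s))=(ma_ms^{m-1}+\cdots,\,nb_ns^{n-1}+\cdots)$, whose normalized direction tends to $(1,0)$, to $(a_m,b_m)$, or to $(0,1)$ according as $m<n$, $m=n$, or $m>n$; call this limit $\tau_0$. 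Since $(x(s),y(s))\to(0,0)$, this top view converges to the line through the origin in direction $\tau_0$.

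Next I would treat the osculating plane through its normal $N(s)=\gamma'(s)\times\gamma''(s)=(y'z''-z'y'',\,z'x''-x'z'',\,x'y''-y'x'')$. Because $z'(0)\ne 0$, its horizontal part is asymptotic to $z'(0)(-y''(s),x''(s))$, of order $\min(m,n)-2$ with nonzero leading coefficient, so $N(s)\ne 0$ for small $s\ne 0$ and the osculating plane is defined there. The vertical component $x'y''-y'x''$ has order at least $m+n-3$, and the elementary inequality $\max(m,n)\ge 2$ yields $m+n-3>\min(m,n)-2$; hence the vertical component is of strictly higher order and $N(s)/\lvert N(s)\rvert$ converges to a horizontal unit vector $b_0$. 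Consequently the osculating plane, passing through $\gamma(s)\to O$, converges to the vertical plane through $O$ with normal $b_0$, whose top view is the line through the origin perpendicular to $b_0$.

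Finally I would match the two directions. The identity $(-y'',x'')\cdot(x',y')=-(x'y''-y'x'')$ together with the order comparison shows that the horizontal normal is asymptotically orthogonal to $(x'(s),y'(s))$, so $b_0\perp\tau_0$; equivalently one checks directly in the three cases $m<n$, $m=n$, $m>n$ that the line perpendicular to $b_0$ has direction $\tau_0$. Thus the top view of the limiting osculating plane and the limiting top view of the tangent are lines through the origin with the same direction, hence they coincide, and both limits exist as asserted. The main obstacle is precisely this order-of-vanishing bookkeeping that forces the limiting osculating plane to be vertical (so that taking its top view is even meaningful); the degenerate cases $m=\infty$ or $n=\infty$, in which the curve lies in a vertical plane, must be inspected separately to confirm that $N(s)$ stays nonzero and the stated limits persist.
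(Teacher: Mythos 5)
Your proof is correct, but it runs on different machinery than the paper's, so a comparison is worthwhile. The paper works with the \emph{arclength} parametrization $r(t)$, $r(0)=O$: analyticity gives $r'(t)=r'(0)+t^k a(t)$ with $a(0)\ne 0$, and $\lvert r'(t)\rvert=\mathrm{const}$ forces $a(0)\perp r'(0)$, i.e.\ $a(0)$ is \emph{horizontal}. Both limits then drop out of this single expansion with no case distinctions: the top view of the tangent is parallel to the top view of $a(t)\to a(0)$, while the osculating plane is spanned by $r'(t)$ and $r''(t)/t^{k-1}=k\,a(t)+t\,a'(t)$, hence tends to the span of the vertical $r'(0)$ and the horizontal $k\,a(0)$, whose top view is again parallel to $a(0)$. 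Your argument instead tracks the separate vanishing orders $m,n$ of $x,y$ in an arbitrary regular parametrization, computes the cross-product normal, and matches directions through a three-way case analysis; it is more elementary and produces the limit directions explicitly, but it is exactly this bookkeeping that the arclength trick eliminates. Two caveats on your write-up. First, your ``order comparison'' proof that $b_0\perp\tau_0$ is inconclusive precisely when $m=n$: there the bound $\mathrm{ord}(x'y''-y'x'')\ge m+n-3=2\min(m,n)-3$ matches, rather than exceeds, the order of $\lvert(-y'',x'')\rvert\cdot\lvert(x',y')\rvert$, so the cosine need not visibly tend to $0$; one must additionally observe that the leading terms of $x'y''$ and $y'x''$ cancel when $m=n$ (raising the order by at least one), or simply rely on your direct three-case check, which is correct and carries the proof. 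Second, the degenerate cases $m=\infty$ or $n=\infty$ that you defer are in fact already covered by your main computation --- the horizontal part of $N$ still has exact order $\min(m,n)-2$ while the vertical component vanishes identically --- whereas in the paper's formulation no such special case ever arises.
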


\begin{proof} Let $r(t)$ be the arclength parametrization of the curve with $r(0)=O$. Since the curve is not a vertical line, $r'(t)\ne\mathrm{const}$. Hence by the analyticity $r'(t)=r'(0)+t^na(t)$ for some integer $n\ge 1$ and a real analytic vector-function $a(t)$ such that $a(0)\ne 0$. 
Since $\lvert r'(t)\rvert=\mathrm{const}$, it follows that $a(0)\perp r'(0)$, i.e. $a(0)$ is horizontal.

The top view of the tangent at $P=r(t)$, where $t\ne 0$ is small enough, is parallel to the top view of $a(t)$ because $r'(0)$ is vertical. Hence the limit of the former is parallel to $a(0)$. 

The osculating plane at $P=r(t)$, where $t\ne 0$ is small enough, is parallel to the linear span of $r'(t)=r'(0)+t^na(t)$ and $r''(t)/t^{n-1}=na(t)+ta'(t)$. As $t\to 0$, the
latter tends to the span of $r'(0)$ and $na(0)$, with the top view again parallel to $a(0)$. 
\end{proof}

\bluevar{
    An exciting study of simply isotropic minimal surfaces containing isotropic lines can be found in the recent work by Akamine and Fujino \cite{AF22}, which may provide additional insights related to the concepts discussed above.
}


\subsection{Acknowledgements}

This research has been supported by KAUST baseline funding (grant BAS/1/1679-01-01). \bluevar{For useful discussions and remarks, our sincere thanks go to Rafael López and Udo Hertrich-Jeromin.}   

\subsection{Statements and Declarations}

\textbf{Conflict of interests.} The authors have no relevant financial or non-financial interests to disclose.

\bibliographystyle{unsrt}
\bibliography{sn-bibliography}


\end{document}